\newtheorem{theorem}{Theorem}[section]
\newtheorem{corollary}{Corollary}
\newtheorem{lemma}[theorem]{Lemma}
\newtheorem{proposition}{Proposition}
\theoremstyle{definition}
\newtheorem{remark}{Remark}
\newcommand{\E}{\mathcal{E}}
\newcommand{\W}{\mathcal{W}}
\newcommand{\R}{\ensuremath{\mathbb{R}}}
\renewcommand{\epsilon}{\varepsilon}
\renewcommand{\Xi}{X}
\newcommand{\Rn}{\mathbb{R}^n}
\renewcommand{\phi}{G}
\newcommand{\kappaA}{{\kappa_{\!\mathcal{A}}}}
\def\la1{\lambda_1}
\newcommand{\kabs}[1]{\ensuremath{\vert#1\vert}}
\numberwithin{equation}{section}
\let\orgdescriptionlabel\descriptionlabel
\renewcommand*{\descriptionlabel}[1]{%
  \let\orglabel\label
  \let\label\@gobble
  \phantomsection
  \edef\@currentlabel{#1}%
  \let\label\orglabel
  \orgdescriptionlabel{#1}%
}
\begin{document}
\title{Partial regularity for steady double phase fluids }

\author[G. Scilla and B. Stroffolini]{}

\maketitle


\centerline{\scshape Giovanni Scilla and Bianca Stroffolini}
\medskip
{\footnotesize
 \centerline{Dipartimento di Matematica ed Applicazioni "R. Caccioppoli"}
   \centerline{Universit\`{a} degli Studi di Napoli Federico II}
 \centerline{Via Cintia, Monte Sant'Angelo 80126 Napoli}
   \centerline{Italy}
}

\bigskip

\begin{abstract}
We study partial H\"older regularity for nonlinear elliptic systems in divergence form with double-phase growth, modeling double-phase non-Newtonian fluids in the stationary case. 
\end{abstract}

\noindent
{\bf MSC (2020):}  35J60, 35J47, 76A05.\\
\\
{\bf Keywords: } double-phase, Newtonian fluids, Morrey regularity.
\\
\begin{flushright}
{\sl{Dedicated to Rosario Mingione for his 50th birthday}}
\end{flushright}

\section{Introduction}

Our research was inspired by two main contributions of Rosario: the regularity for non-Newtonian fluids \cite{acerbimingione} and the double phase problems, \cite{ColomboMingione15, baronicolombomingione}.
\subsection{The problem and the main assumptions} This article deals with  nonlinear elliptic systems in divergence form, modeling double-phase non-Newtonian fluids in the stationary case:
\begin{equation}
{\rm div}\,\bfu=0\,,\quad{\rm div}\,{\bf a}(x,\E\bfu) + D \pi =\bfu[D \bfu] + f  
\,\, \mbox{ in $\Omega$.}
\label{eq:1.1ok}
\end{equation}
Here $\Omega\subseteq\R^n$ denotes an open, bounded set, $n\geq3$, the vector-valued
map $\bfu:\Omega\to\R^n$ can be interpreted as the stationary velocity field of a fluid, and the scalar function $\pi:\Omega\to\R$ plays the role of the pressure.
 The stress tensor will have double phase growth, as a function of the symmetric gradient $\E\bfu$.

 For the nonlinear diffusion term  
 ${\bf a}:\Omega\times\R_{\rm sym}^{n\times n}\to \R_{\rm sym}^{n\times n}$ we are considering a double phase growth condition of the type 
 \begin{equation}
H(x,t):= t^p + \mu(x) t^q\,,
\label{eq:H}
\end{equation}
 where \begin{equation}
2\leq p<q\leq p+\frac{\alpha p}{n}\,, \quad 0<\alpha\leq1\,,
\label{eq:pq}
\end{equation}
 and the modulating coefficient $\mu$ is non negative,  bounded and H\"older continuous with exponent $\alpha$. 

{Notice that in the region $\{\mu=0\}$, we have $H(x,t)=t^p$ so that $H$ has $p$-phase, while in the region $\{\mu>0\}$ the function $H$ has $q$-phase. Moreover, for each $x\in\Omega$ such that $\mu(x)>0$, the function $t\to H(x,t)$ is an $N$-function (see Section~\ref{sec:basicNfunctions}) complying with \eqref{(2.1celok)} where $g_1=p$ and $g_2=q$.}
The precise assumptions are:
\begin{equation}
\begin{cases}
|{\bf a}(x,\bm\xi)|+ |D_\xi \bfa(x,\bm\xi)|(1+|\bm\xi|) \leq L H'(x,1+|\bm\xi|) \\
\langle D_\xi {\bf a}(x,\bm\xi)\bm\lambda,\bm\lambda\rangle \geq \nu H''(x,1+|\bm\xi|)|\bm\lambda|^2 \,.
\end{cases}
\label{eq:1.8ok}
\end{equation}
for every $x\in\Omega$ and $\bm\xi\in\R_{\rm sym}^{n\times n}$, $\bm\lambda \in \R^{n\times n}$ and for some $0<\nu\leq L$, {where $H'(x,t)$ and $H''(x,t)$ denote the first and second derivative of $t\to H(x,t)$, respectively.} 
Note that the second inequality in \eqref{eq:1.8ok} implies 
\begin{equation}
({\bf a}(x,\bm\xi_1) - {\bf a}(x,\bm\xi_2)) : (\bm\xi_1-\bm\xi_2) \geq \tilde{\nu} H''(x,1+|\bm\xi_1|+|\bm\xi_2|)|\bm\xi_1-\bm\xi_2|^2
\label{eq:1.9ok}
\end{equation}
for every $x\in\Omega$ and $\bm\xi_1, \bm\xi_2\in \R_{\rm sym}^{n\times n}$.
We further assume the existence of a nondecreasing and concave function $\omega:[0,\infty)\to[0,1]$ with $\omega(0)=0$ such that
\begin{equation}
\begin{split}
|{\bf a}(x_1,\bm\xi)-{\bf a}(x_2,\bm\xi)| \leq & L |\mu(x_1)-\mu(x_2)| 
(1+|\bm\xi|)^{q-1}\,,
\end{split}
\label{eq:1.11ok}
\end{equation}
and
\begin{equation}
|D_\xi{\bf a}(x,\bm\xi_1)-D_\xi{\bf a}(x,\bm\xi_2)| \leq L \omega\left(\frac{|\bm\xi_1-\bm\xi_2|}{1+|\bm\xi_1|+|\bm\xi_2|}\right) H''(x,1+|\bm\xi_1|+|\bm\xi_2|)\,,
\label{eq:1.12ok}
\end{equation}
for every $x, x_1, x_2\in\Omega$ and $\bm\xi, \bm\xi_1, \bm\xi_2\in \R_{\rm sym}^{n\times n}$.

For the force term $f$ we require that
\begin{equation}
f\in L^{n(1+\beta)}_{\rm loc}(\Omega;\R^n)
\label{eq:1.6bog}
\end{equation}
for some $\beta>0$.

\subsection{The main result}
{A pair $(\bfu, \pi)\in W^{1,1}(\Omega;\R^n)\times W^{1,1}(\Omega, \R)$ with $H(\cdot,|\E\bfu|)\in L^1_{\rm{loc}}(\Omega)$ is a \emph{weak solution} to \eqref{eq:1.1ok} if and only if ${\rm div}\,\bfu=0$ in $\Omega$ in the sense of distributions and
\begin{equation}
\int_\Omega [\langle{\bf a}(x,\E\bfu), \E\bm\varphi\rangle + \pi\,{\rm div}\,\bm\varphi]\,\mathrm{d}x = \int_\Omega [\bfu[D \bfu] + f]\cdot\bm\varphi\,\mathrm{d}x
\label{eq:system1}
\end{equation}
holds for all $\bm\varphi\in C_0^\infty(\Omega,\R^n)$. If we test with divergence free vector fields
\begin{equation*}
\bm\varphi\in C_{0,{\rm div}}^\infty(\Omega,\R^n):= \{\bm\psi\in C_0^\infty(\Omega,\R^n):\,\, {\rm div}\,\bm\psi=0\}\,,
\end{equation*}
the pressure term in \eqref{eq:system1} vanishes and the system reduces to ${\rm div}\,\bfu=0$ in $\Omega$ in the sense of distributions and
\begin{equation}
\int_\Omega \langle{\bf a}(x,\E\bfu), \E\bm\varphi\rangle \,\mathrm{d}x = \int_\Omega [\bfu[D \bfu] + f]\cdot\bm\varphi\,\mathrm{d}x
\label{eq:system2}
\end{equation}
whenever $\bm\varphi\in C_{0,{\rm div}}^\infty(\Omega,\R^n)$. Within this setting, we say that $\bfu \in W^{1,1}(\Omega;\R^n)$ with $H(\cdot,|\E\bfu|)\in L^1_{\rm{loc}}(\Omega)$ is a {weak solution} to \eqref{eq:1.1ok} if and only if ${\rm div}\,\bfu=0$ in $\Omega$ in the sense of distributions and \eqref{eq:system2} holds.}

The main result of the paper is a partial Morrey regularity result:

\begin{theorem}\label{thm:thm1.1ok}
Let $H:\Omega\times[0,+\infty)\to[0,+\infty)$ be defined as in \eqref{eq:H}, with 
\eqref{eq:pq}. Assume that the vector field ${\bf a}:\Omega\times \R_{\rm sym}^{n\times n}\to \R_{\rm sym}^{n\times n}$ complies with \eqref{eq:1.8ok}, \eqref{eq:1.11ok}, \eqref{eq:1.12ok}, and that 
\eqref{eq:1.6bog} holds. Let $(\bfu, \pi)\in W^{1,1}(\Omega;\R^n)\times W^{1,1}(\Omega, \R)$ with $H(\cdot,|\E\bfu|)\in L^1_{\rm{loc}}(\Omega)$ be a weak solution to \eqref{eq:1.1ok}.
 Then there exists an open subset $\Omega_0 \subset \Omega$ such that
\begin{equation*}
 \bfu \in C^{0, \beta}_{\rm{loc}}\left(\Omega_0 ,\R^{n}\right) , \qquad \kabs{\Omega \setminus \Omega_0} \, = \, 0
\end{equation*}
for every $\beta\in (0,1)$. Moreover, $\Omega \setminus \Omega_0\subset \Sigma_1\cup\Sigma_2$ where
\begin{equation*}
\begin{split}
&\Sigma_1:=\left\{x_0\in\Omega:\,\, \mathop{\lim\inf}_{\varrho\searrow 0}\dashint_{B_\varrho(x_0)}|D\bfu-(D\bfu)_{x_0,\varrho}|\,\mathrm{d}x>0\right\}\,,\\
&\Sigma_2:=\left\{x_0\in\Omega:\,\, \mathop{\lim\sup}_{\varrho\searrow 0}\left[\dashint_{B_\varrho(x_0)}H(x,1+|\E\bfu|)\,\mathrm{d}x + (|D\bfu|^p)_{x_0,\varrho} + |(\bfu)_{x_0,\varrho}|\right]=+\infty\right\}\,.
\end{split}
\end{equation*}
\end{theorem}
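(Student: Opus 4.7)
The plan is to follow the classical $\mathcal{A}$-Stokes harmonic approximation scheme for partial regularity, adapted to the double phase growth of $\bfa$ and to the Navier--Stokes-like structure (pressure and convective term $\bfu[D\bfu]$). At a point $x_0 \notin \Sigma_1 \cup \Sigma_2$, the definitions directly provide the smallness of the relevant excess together with the a priori $H$-energy bound and the local boundedness of $(\bfu)_{x_0,\varrho}$, which are exactly the inputs required to initiate the scheme. An iteration of an excess-decay estimate will yield Morrey--Campanato bounds on $D\bfu$ of arbitrary subcritical decay rate, and Campanato's criterion then delivers $\bfu \in C^{0,\beta}_{\rm loc}(\Omega_0)$ for every $\beta \in (0,1)$ on $\Omega_0 := \Omega \setminus (\Sigma_1 \cup \Sigma_2)$; the negligibility $|\Omega \setminus \Omega_0|=0$ follows from Lebesgue's differentiation theorem applied to $D\bfu$, $H(\cdot,|\E\bfu|)$ and $\bfu$, all in $L^1_{\rm loc}$.

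\textbf{Main steps.} First, I would prove a Caccioppoli inequality of the second kind by testing \eqref{eq:system2} against $\eta^m(\bfu - \bm\ell)$, with $\bm\ell$ an affine divergence-free field matched to the mean of $\bfu$ on $B_\varrho(x_0)$: the pressure disappears because the test is solenoidal, monotonicity \eqref{eq:1.9ok} absorbs the principal term, and the convective contribution $\bfu[D\bfu]$ is controlled as a subcritical perturbation via Korn and Sobolev inequalities under the smallness afforded by $x_0 \notin \Sigma_2$. Next, at a scale $\varrho$ on which the excess is small, I would freeze the coefficients at $\mathcal{A}_0 := D_\xi \bfa(x_0, (\E\bfu)_{x_0,\varrho})$ and check, in a suitable dual norm, that $\bfu - (\bfu)_{x_0,\varrho}$ is approximately $\mathcal{A}_0$-Stokes; the defect is estimated using \eqref{eq:1.11ok}--\eqref{eq:1.12ok}, the H\"older continuity of $\mu$, the force assumption \eqref{eq:1.6bog}, and Sobolev bounds on the convective term. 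The $\mathcal{A}$-Stokes harmonic approximation lemma, in the shifted $N$-function form required by the double phase setting, then produces a smooth solenoidal comparison field $\bfh$, whose $C^{1,\gamma}$ interior regularity combined with a comparison of $\bfu$ with the affine Taylor expansion of $\bfh$ at $x_0$ yields
\[
\Phi(x_0, \theta\varrho) \leq C \theta^2 \, \Phi(x_0, \varrho) + \text{higher order errors}, \qquad \theta \in (0,1),
\]
where $\Phi$ is the natural shifted excess. Iteration on geometrically decreasing radii concludes the argument.

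\textbf{Main obstacle.} The delicate point is the harmonic approximation step in the double phase regime, where the character of $H(x,\cdot)$ changes between $\{\mu=0\}$ and $\{\mu>0\}$. One has to perform a $p$/$q$-phase alternative, selecting the shifted $N$-function adapted either to the $p$-phase or to the $q$-phase according to the size of $\mu(x_0)(1+|(\E\bfu)_{x_0,\varrho}|)^{q-p}$, and to verify that both branches produce the same decay estimate. The sharp restriction $q \leq p + \alpha p/n$ from \eqref{eq:pq} is precisely what allows the oscillation of $\mu$ to be absorbed by the excess of $\bfu$ in either regime. Auxiliary technicalities include a Bogovski\u{\i} correction to preserve the divergence-free constraint on test functions (so that the pressure can consistently be discarded) and keeping the convective term subcritical, which is guaranteed precisely by the excision of $\Sigma_2$.
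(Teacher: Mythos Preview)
Your overall strategy matches the paper's: Caccioppoli estimate, linearization via the frozen bilinear form $\mathcal{A}=D_\xi\bfa(x_0,(\E\bfu)_{x_0,\varrho})/H''(x_0,1+|(\E\bfu)_{x_0,\varrho}|)$, $\mathcal{A}$-Stokes approximation, excess decay, iteration, and the $p$/$(p,q)$-phase alternative governed by the size of $\mu$ on the ball. You also correctly flag the Bogovski\u{\i} correction and the subcritical handling of the convective term via the $\Sigma_2$ condition.

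Two ingredients in the paper are missing from your outline and are not mere bookkeeping. First, the $\mathcal{A}$-Stokes approximation lemma (and its application in the excess-decay step) requires a \emph{higher integrability} bound of the form $\big(\dashint_{B_{\varrho/2}}[H_{1+|(\E\bfu)_\varrho|}(x_0,|\E\bfu-(\E\bfu)_\varrho|)]^\sigma\big)^{1/\sigma}\lesssim \Phi(\varrho)+\varrho^{\widetilde\gamma}H(x_0,1+|(\E\bfu)_\varrho|)$ for some $\sigma>1$; the paper obtains this by a Gehring-type argument combining the Caccioppoli estimate with a Sobolev--Korn reverse H\"older inequality, and it is this step that forces the restriction $q\leq p+\alpha p/n$. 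Without it the approximation theorem, which takes an $L^s$ input, cannot be invoked.

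Second, your iteration of $\Phi(x_0,\theta\varrho)\leq C\theta^2\Phi(x_0,\varrho)+\text{errors}$ alone will not close, because $|(D\bfu)_{x_0,\varrho}|$ may blow up along the iteration (one cannot expect $C^1$ here). The paper iterates \emph{simultaneously} the normalized excess $\Phi/H(x_0,1+|(\E\bfu)_{x_0,\varrho}|)$, a Morrey-type excess $\Theta(x_0,\varrho)=\varrho^{1/2}[H(x_0,\cdot)]^{-1}\big(\dashint_{B_\varrho}H(x_0,1+|D\bfu|)\big)$, and $|(\bfu)_{x_0,\varrho}|$; the smallness of $\Theta$ and the boundedness of $|(\bfu)_{x_0,\varrho}|$ are what keep the convective-term and Caccioppoli hypotheses valid at every scale, and it is the decay of $\Theta$ (not of $\Phi$) that ultimately yields $C^{0,\beta}$ via Campanato. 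Your sketch should make this Morrey excess explicit in the iteration.
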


\subsection{Historical background} The study of PDEs/functionals with $(p,q)$ growth, corresponding to the case $\mu(x) \equiv1$, was initiated by Marcellini in the 90's \cite{marcellini91} and has been generalized in many directions, see \cite{espoleomin, mingionedark,carozzakripass,mingioneradulescu, cristianaleo, marcellini22} and references therein. The main feature of these problems is a gap between coercitivity and growth condition, that can eventually lead even to unbounded minimizers/solutions.
In order to prevent such irregular behaviour, one has to impose a bound on the ratio $\frac {q}{ p} <1+\frac1{n}$.
This bound has been improved recently by Bella and Sch\"affner to $\frac {q}{ p} <1+\frac2{n-1}$, \cite{bellaschaffner}. More recently, in \cite{cristiana22, cristianame}, some tools from nonlinear potential theory have been employed to infer sharp partial regularity results for relaxed minimizers of degenerate/singular, nonuniformly elliptic quasiconvex functionals of $(p,q)$-growth.
\par
As for the double phase problems, the ellipticity could eventually change from $p$-phase to $q$-phase, since the modulating coefficient could annihilate. Pioneer were Colombo and Mingione, Baroni, Colombo and Mingione  who established Morrey estimates for local minimizers under the condition $\frac {q}{ p} <1+\frac{\alpha}{n}$, where $\alpha$ is the modulus of continuity of the modulating coefficient, and also they considered borderline cases (of logarithmic type) and a unified approach to variable exponent, \cite{ColomboMingione15}, \cite{baronicolombomingione}. Their method relies on a refined alternative between $p$-phase and $(p,q)$-phase at every scale combined with an exit time argument.
Their intrinsic approach was pushed further by Ok, who was able to find 
an easier proof 
in the superquadratic case, \cite{OKJFA18} and \cite{OKNLA18}. \par

In the literature, nonlinear elliptic systems of \eqref{eq:1.1ok} when  the symmetric gradient has $p(x)$ growth are known as electrorheological fluids. The model was introduced by 
Rajagopal and \Ruzicka, \cite{rajruz01}, while the existence theory was studied by \Ruzicka, \cite{ruzickabook}. Acerbi and Mingione, \cite{acerbimingione}, first established  regularity for solutions of such systems. They proved that the gradient is partially H\"older continuous on a set of full measure under suitable continuity assumptions on the exponent $p(x)$.\par

Our aim is to study regularity properties of solutions of the system  \eqref{eq:1.1ok} where the symmetric gradient presents a double phase growth. Featuring the double phase case considered by Colombo and Mingione, we cannot expect more than a partial Morrey regularity result.
It is worth mentioning the paper \cite{BDHS 12}  where the authors considered electrorheological fluids with discontinuous coefficients and proved Morrey partial regularity.\par
Recently, the uniqueness of small solutions for steady double phase fluids has been investigated in \cite{ABC} for $\frac65<p<2<q.$

\subsection{Strategy of the proof}

We briefly explain the strategy of the proof of the main result. First, we stress the fact that our argument relies on two technical major ingredients which, to the best of our knowledge, appear for the first time in this paper. 

The first one is the \emph{$\mathcal{A}$-Stokes approximation lemma} for $\mathcal{A}$-Stokes systems with general growth (Theorem~\ref{AStokes}), which extends an analogous result for Stokes systems with $p$-growth provided by \cite[Theorem~4.2]{BrDieFuchs12}. The second one, contained in Lemma~\ref{lem:higint}, is a \emph{higher integrability result} for weak solutions to double phase Stokes systems \eqref{eq:1.1ok} with lower order terms. Namely, we can find an exponent $s>1$ depending on the data of the problem, such that
\begin{equation*}
\left(\dashint_{B_{r}(x_0)}[H(x,1+|\E\bfu|)]^{s}\,\mathrm{d}x\right)^{\frac{1}{s}} \leq c \dashint_{B_{2r}(x_0)}H(x,1+|\E\bfu|)\,\mathrm{d}x + c\dashint_{B_{2r}(x_0)}(r^\frac{p}{2}|D\bfu|^p+|\bfu|^{p^*}+1) \, \mathrm{d}x\,.
\end{equation*}
Analogous results were obtained for Stokes systems with $p(\cdot)$-growth, see \cite[Theorem~4.2]{acerbimingione} and \cite[Lemma~2.10]{BDHS 12}.

When addressing the problem of partial regularity, the leading quantity is the Campanato-type excess functional
\begin{equation*}
\Phi(x_0,\varrho):=\dashint_{B_\varrho(x_0)} H_{1+|(\E\bfu)_{x_0,\varrho}|} (x_0, |\E\bfu-(\E\bfu)_{x_0,\varrho}|)\,\mathrm{d}x
\end{equation*}
(see \eqref{eq:excess1}). 
We are dealing with the non-degenerate setting; i.e., when
\begin{equation}
\Phi(x_0,\varrho)\leq H(x_0,1+|(\E\bfu)_{x_0,\varrho}|)\,,
\label{(eqND)}
\end{equation}
so that we can linearize the problem, via the $\mathcal{A}$-Stokes approximation, Theorem~\ref{AStokes}.  
The linearization procedure of Lemma~\ref{lem:lemma4.2ok} provides a suitable rescaling ${\bf w}$ of the solution $\bfu$ which is shown to be approximately $\mathcal{A}$-Stokes on a ball $B_\varrho(x_0)$, and the deviation from being $\mathcal{A}$-Stokes is measured in terms of the ``hybrid'' excess
\begin{equation}
\Phi(x_0,\varrho) + \varrho^{\widetilde{\gamma}}H(x_0, 1+|(\E\bfu)_{x_0,\varrho}|)\,.
\label{eq:hybridexcess}
\end{equation}
Then, if \eqref{eq:hybridexcess} is small enough, Theorem~\ref{AStokes} applies and ensures the existence of a smooth $\mathcal{A}$-Stokes function $\bf h$ which is close in an integral sense to $\bf w$ and for which classical decay estimates are available, see \cite{fuchseregin00}. 
Scaling back from $\bf w$ to $\bfu$, this allows to prove an excess-decay estimate, which, in turn, permits the iteration of the rescaled excess $\frac{\Phi(x_0,\varrho)}{H(x_0,1+|(\E\bfu)_{x_0,\varrho}|)}$ and of a ``Morrey-type'' excess 
\begin{equation*}
\Theta(x_0,\varrho):=\varrho^\frac{1}{2} [H(x_0,\cdot)]^{-1}\left(\dashint_{B_{\varrho}}H(x_0,1+|D \bfu|)\,\mathrm{d}x\right)
\end{equation*}
at each scale. 

Namely,
there exists $\vartheta\in(0,1)$ such that, if the smallness conditions 
\begin{equation*}
\frac{\Phi(x_0,\varrho)}{H(x_0,1+|(\E\bfu)_{x_0,\varrho}|)}\leq \varepsilon_*\,,\qquad \Theta(x_0,\varrho)\leq\delta_*\,, \qquad |(\bfu)_{x_0,\varrho}|\leq\frac{1}{2}M
\end{equation*}
hold on some ball $B_\varrho(x_0)$, then 
\begin{equation*}
\frac{\Phi(x_0,\vartheta^m\varrho)}{H(x_0, 1+|(\E\bfu)_{x_0,\vartheta^m\varrho}|)}\leq \varepsilon_*\,,\qquad  \Theta(x_0,\vartheta^m\varrho)\leq\delta_*\,, \qquad |(\bfu)_{x_0,\vartheta^m\varrho}|\leq M
\end{equation*}
hold for every $m=0,1,\dots.$. 
Notice that, if on  one hand $|(D{\bf u})_{x_0,\varrho}|$ might blow up in the iteration since we cannot expect $C^1$-regularity; on the other hand, the Morrey excess $\Theta(x_0,\vartheta^k\varrho)$ stays bounded, exactly as it should be for a $C^{0,\alpha}$-regularity result.  The smallness of $\Theta$
at any level ensure H\"{o}lder continuity of $\bfu$ at $x_0$ provided the excess functionals $\Phi$ and $\Theta$ are small at some initial radius
$\varrho$ (actually, this holds in a neighborhood of $x_0$, since these smallness conditions are open). Finally, it is then proven that such a smallness condition on the excesses is indeed satisfied on the complement of the set $\Sigma_1\cup\Sigma_2$ of Theorem~\ref{thm:thm1.1ok}. 
{Notice that we were already using the Morrey type excess in \cite{goodscistro}, where we faced the partial regularity for discontinuous quasiconvex integrals with general growth and \cite{okscistro}  where  we considered the boundary partial regularity.}

\noindent
{\emph{Outline of the paper.} The paper is organized as follows. Section~\ref{sec:prelim} collects some basic definitions and auxiliary results useful throughout the paper. Specifically, Section~\ref{sec:basicNfunctions} contains basic facts about Orlicz functions, while Sections~\ref{sec:bogow}--\ref{sec:sobkorn} concern with typical auxiliary results in the study of systems depending on the symmetrized gradient, as a lemma of Bogowski\v{\i} and Sobolev-Korn-type inequalities. In Section~\ref{sec:Astokes} we formulate the $\mathcal{A}$-Stokes approximation lemma, which together with the self improving properties of weak solutions (Section~\ref{sec:selfimprove})}, is a key tool in the linearization procedure. The problem of the partial regularity of weak solutions starts with Section~\ref{sec:decayestimate}. Namely, in Section~\ref{sec:caccioppolitype} we prove the necessary Caccioppoli-type estimates, in Section~\ref{sec:almostAStokes} we show that a suitable scaling of the weak solution is almost $\mathcal{A}$-Stokes, while in Section~\ref{sec:excessdecay} we get some excess decay estimates. Finally, Section~\ref{sec:proofmainthm} is entirely devoted to the proof of the main result, Theorem~\ref{thm:thm1.1ok}.

\section{Preliminaries and auxiliary results} \label{sec:prelim}

\subsection{Basic notation}\label{sec:notation}
We denote by $\Omega$ an open bounded domain of $\R^n$. For $x_0\in\R^n$ and $r>0$, $B_r(x_0)$ is the open ball of radius $r$ centred at $x_0$. In the case $x_0=0$, we will often use the shorthand $B_r$ in place of $B_r(x_0)$. If $f\in L^1(B_r(x_0))$, we denote the average of $f$ by
\begin{equation*}
(f)_{x_0,r}:= \dashint_{B_r(x_0)} f\,\mathrm{d}x\,,
\end{equation*}
and we use the abbreviate notation $(f)_r$ for $(f)_{0,r}$. We denote by $\R^{n\times n}_{\rm sym}$ the set of all
symmetric $n\times n$ matrices. For $x,y\in\R^n$, we denote their tensor product by $x\otimes y:=\{x_iy_j\}_{i,j}\in \R^{n^2}$ ad their tensor symmetric product by $x\odot y:=\frac{1}{2}(x\otimes y + y\otimes x)\in \R^{n\times n}_{\rm sym}$. For a function $\bfu=(u^i)\in L^1(\Omega)$, we denote by $\E\bfu$ and $\mathcal{W}\bfu$ its symmetric and skew-symmetric distributional derivative, respectively:
\begin{equation*}
\E\bfu \equiv (\E\bfu)_{i,j}:=\frac{\partial_j u^i + \partial_i u^j}{2}\,, \quad \mathcal{W}\bfu \equiv (\mathcal{W}\bfu)_{i,j}:=\frac{\partial_j u^i - \partial_i u^j}{2} \,.
\end{equation*}
If $p>1$, then $p':=\frac{p}{p-1}$ denotes the conjugate exponent of $p$. If $1<p<n$, the number $p^*:=\frac{np}{n-p}$ stands for the Sobolev conjugate exponent of $p$, whereas $p^*$ is any real number if $p\geq n$.

\subsection{Some basic facts on $N$--functions} \label{sec:basicNfunctions}

We recall here some elementary definitions and basic results about Orlicz functions. The following definitions and results can be found, e.g., in \cite{Kras, Kufn, Bennett, Adams}. 


A real-valued function $G:[0,\infty)\to[0,\infty)$ is said to be an \emph{$N$-function} if it is convex and satisfies the following conditions: $\phi(0)=0$, $G$ admits the derivative $\phi'$ and this derivative is right continuous, non-decreasing and satisfies $\phi'(0) = 0$, $\phi'(t)>0$ for $t>0$, and $\lim_{t\to \infty} \phi'(t)=\infty$. 

We assume that $G$ also satisfies
\begin{equation}
g_1\leq \inf_{t>0}\frac{tG'(t)}{G(t)}\leq \sup_{t>0}\frac{tG'(t)}{G(t)}\leq g_2\,,
\label{(2.1celok)}
\end{equation}
for some $1<g_1\leq g_2 <\infty$. For instance, $G(t):=t^p$, $1<p<\infty$, is an $N$-function complying with \eqref{(2.1celok)} where $g_1=g_2=p$. Also $G(t):=H(x_0,t)$, where $H$ is defined in \eqref{eq:H}, is an $N$-function satisfying \eqref{(2.1celok)} with $g_1=p$ and $g_2=q$. Moreover, it can be seen that if $G\in C^2(0,\infty)$ and satisfies 
\begin{equation}
0<g_1-1\leq \inf_{t>0}\frac{tG''(t)}{G'(t)}\leq \sup_{t>0}\frac{tG''(t)}{G'(t)}\leq g_2-1\,,
\label{(2.1celokbis)}
\end{equation}
then $G$ is an $N$-function and \eqref{(2.1celok)} holds.

$\phi^\ast$ denotes the Young-Fenchel-Yosida conjugate function of $\phi$, given by $\phi^*(t)= \sup_{s \geq 0} (st - \phi(s))$. It is again an $N$-function; it satisfies \eqref{(2.1celok)} with $\frac{g_2}{g_2-1}$ and $\frac{g_1}{g_1-1}$ in place of $g_1$ and $g_2$, respectively. Also, $(\phi^\ast)^\ast = \phi$.

\begin{proposition}\label{prop:properties}
Let $G:[0,\infty)\to[0,\infty)$ be an $N$-function complying with \eqref{(2.1celok)}. Then 
\begin{description}
\item[(i)] the mappings
\begin{equation*}
t\in(0,+\infty)\to \frac{G'(t)}{t^{g_1-1}}\,,\,\, \frac{G(t)}{t^{g_1}} \mbox{ \,\, and \,\, } t\in(0,+\infty)\to \frac{G'(t)}{t^{g_2-1}}\,,\,\, \frac{G(t)}{t^{g_2}}
\end{equation*}
are increasing and decreasing, respectively. In particular,
\begin{equation}
\begin{split}
G(at) & \leq a^{g_1}G(t)\,,\,\,\,\,\,\, G'(at) \leq a^{g_1-1}G'(t)\,, \quad 0<a<1\,, \\
G(bt) & \leq b^{g_2}G(t)\,,\,\,\,\,\,\, G'(bt) \leq b^{g_2-1}G'(t)\,,\quad b>1\,.
\end{split}
\label{eq:2.2ok}
\end{equation}
Moreover,
\begin{equation}
G^*(at) \leq a^\frac{g_2}{g_2-1}G(t)\,,\quad G^*(bt) \leq a^\frac{g_1}{g_1-1}G(t) \,.
\label{eq:2.3ok}
\end{equation}
\item[(ii)] 
\begin{equation}
G(s+t) \leq 2^{g_2-1}(G(s)+G(t))\,;
\end{equation}
\item[(iii)] (Young's inequality) for any $\lambda\in(0,1]$ it holds that
\begin{equation}
\begin{split}
st & \leq \lambda^{-g_2+1}G(s) + \lambda G^*(t)\,, \\
st & \leq \lambda G(s) + \lambda^{-\frac{1}{g_1-1}} G^*(t)\,.
\end{split}
\label{eq:2.5ok}
\end{equation}
\item[(iv)] there exists a constant $c=c(g_1,g_2)>1$ such that
\begin{equation}
c^{-1}G(t) \leq G^*(t^{-1}G(t)) \leq c G(t)\,.
\label{eq:2.6ok}
\end{equation}
\end{description}
\end{proposition}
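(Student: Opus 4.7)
The plan is to derive all four statements from the scale invariance \eqref{(2.1celok)} by manipulating monotone ratios and then transferring everything to the Young conjugate.

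For part (i), I would first establish the monotonicity of the four quotients $G'(t)/t^{g_1-1}$, $G(t)/t^{g_1}$, $G'(t)/t^{g_2-1}$, $G(t)/t^{g_2}$ by logarithmic differentiation. For instance, writing $\psi(t) := \log(G(t)/t^{g_1})$ gives $\psi'(t) = G'(t)/G(t) - g_1/t$, which is non-negative precisely because of the lower bound $tG'(t)/G(t)\geq g_1$ in \eqref{(2.1celok)}; the other three cases are analogous. The two-sided comparison \eqref{eq:2.2ok} then follows by comparing the monotone ratio at $t$ and at $at$ (resp.\ $bt$). To obtain the conjugate version \eqref{eq:2.3ok}, I would first verify that $G^*$ is itself an $N$-function satisfying \eqref{(2.1celok)} with exponents $g_2/(g_2-1)$ and $g_1/(g_1-1)$; this uses $(G^*)'(t) = (G')^{-1}(t)$, together with the identity $sG'(s)/G(s) = 1/(1 - G(s)/(sG'(s)))$ evaluated at $s = (G')^{-1}(t)$, and the involution $(G^*)^* = G$. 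Then \eqref{eq:2.3ok} is simply \eqref{eq:2.2ok} applied to $G^*$.

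For (ii), convexity of $G$ gives $G(s+t)=G(\tfrac{1}{2}(2s)+\tfrac{1}{2}(2t))\leq\tfrac{1}{2}(G(2s)+G(2t))$, and \eqref{eq:2.2ok} with $b=2$ absorbs the factor $2$ at the cost of $2^{g_2-1}$. For Young's inequality (iii), I would start from the baseline bound $st\leq G(s)+G^*(t)$ coming from the Fenchel--Young identity $G^*(t)=\sup_s(st-G(s))$, and then rescale via $(s,t)\mapsto(\sigma s,\sigma^{-1}t)$ for an appropriate $\sigma=\sigma(\lambda)$. Applying \eqref{eq:2.2ok} to the first term and \eqref{eq:2.3ok} to the second yields both variants in \eqref{eq:2.5ok}, the two lines corresponding to whether one absorbs $\lambda$ into $G$ (via $\sigma=\lambda^{1/g_1}$) or into $G^*$ (via $\sigma=\lambda^{-(g_2-1)/g_2}$).

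Part (iv) requires a touch more care. The key identity is the Young equality case $G^*(G'(t))=tG'(t)-G(t)$. Combined with \eqref{(2.1celok)} in the form $g_1G(t)\leq tG'(t)\leq g_2G(t)$, this yields $G^*(G'(t))\simeq G(t)$ with constants depending only on $g_1,g_2$. To pass from $G'(t)$ to $t^{-1}G(t)$ inside $G^*$, note that the same bounds give $t^{-1}G(t)\leq G'(t)\leq g_2\,t^{-1}G(t)$, and \eqref{eq:2.3ok} applied to $G^*$ (with $a=g_2^{-1}<1$ or $b=g_2>1$) shows that $G^*(t^{-1}G(t))$ and $G^*(G'(t))$ are comparable. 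Combining the two equivalences gives \eqref{eq:2.6ok}. The main obstacle I anticipate is verifying that $G^*$ inherits \eqref{(2.1celok)} with the dual exponents, since every bound for $G^*$ used in (iii) and (iv) hinges on it; once this transfer is established, the remainder reduces to bookkeeping of monotone ratios and doubling constants.
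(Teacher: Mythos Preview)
Your argument is correct and follows the standard route for these Orlicz-function facts. Note, however, that the paper does not actually supply a proof of this proposition: it is stated as a collection of well-known properties, with the surrounding text referring the reader to the textbooks \cite{Kras, Kufn, Bennett, Adams}. Your sketch---logarithmic differentiation for the monotone ratios in (i), transfer to $G^*$ via the dual exponents $g_2/(g_2-1)$, $g_1/(g_1-1)$ (which the paper records just before the proposition), the convexity/doubling trick for (ii), the rescaled Fenchel--Young inequality for (iii), and the equality case $G^*(G'(t))=tG'(t)-G(t)$ combined with $t^{-1}G(t)\sim G'(t)$ for (iv)---is exactly how these facts are established in those references, so there is nothing to compare against here beyond noting that your proof fills in what the paper leaves to the literature.
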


We say that $\phi$ satisfies the \emph{$\Delta_2$-condition} if there exists $c > 0$ such that for all $t \geq 0$ holds $\phi(2t) \leq c\,
\phi(t)$. We denote the smallest possible such constant by $\Delta_2(\phi)$. Since $\phi(t) \leq \phi(2t)$, the $\Delta_2$-condition is equivalent to $\phi(2t) \sim \phi(t)$, where ``$\sim$'' indicates the equivalence between $N$-functions.\par
%

In particular, from \eqref{eq:2.2ok} it follows that both $G$ and $G^*$  satisfy the $\Delta_2$-condition with constants $\Delta_2(\phi)$ and $\Delta_2(\phi^*)$ determined by $g_1$ and $g_2$. We will denote by $\Delta_2({\phi, \phi^\ast})$ constants depending on $\Delta_2(\phi)$ and $\Delta_2(\phi^*)$. Moreover, for $t>0$ we have
\begin{equation}
\phi(t) \sim \phi'(t)\,t\,, \qquad \phi(t) \sim \phi''(t)\,t^2\,.
\label{ineq:phiast_phi_p}
\end{equation}

The following inequalities hold for the inverse function $G^{-1}$:
\begin{align}
a^{\frac{1}{g_1}}G^{-1}(t)\leq &G^{-1}(at)\leq a^{\frac{1}{g_2}}G^{-1}(t)
\label{(2.3a)} 
\end{align}
for every $t\geq0$ with $0<a\leq1$. The same result holds also for $a\geq1$ by exchanging the role of $g_1$ and $g_2$.

%


Another important set of tools are the \emph{shifted} $N$-functions
$\{\phi_a \}_{a \ge 0}$ (see \cite{DIEETT08}). We define for $t\geq0$
\begin{equation}
  \label{eq:phi_shifted}
  \phi_a(t):= \int _0^t G_a'(s)\, \mathrm{d}s\qquad\text{with }\quad
  \phi'_a(t):=\phi'(a+t)\frac {t}{a+t}.
\end{equation}
We have the following relations:
\begin{align}
&\phi_a(t) \sim \phi'_a(t)\,t\,; \nonumber \\ 
&\phi_a(t) \sim \phi''(a+t)t^2\sim\frac{G(a+t)}{(a+t)^2}t^2\sim \frac{G'(a+t)}{a+t}t^2\,,\label{(2.6b)}\\
& \phi(a+t)\sim [\phi_a(t)+\phi(a)]\,,\label{(2.6c)} \\
& G(a+t)\leq cG_a(t) \,\, \mbox{for $t\geq a$. } \label{(2.6d)}
\end{align}
The families $\{\phi_a \}_{a \ge 0}$ and
$\{(\phi_a)^* \}_{a \ge 0}$ satisfy the $\Delta_2$-condition uniformly in $a \ge 0$. 

We recall also that, by virtue of \cite[Lemma~30]{DIEETT08}, uniformly in $\lambda\in[0,1]$ and $a\geq0$ holds
\begin{equation}
G_a^*(\lambda G'(a)) \sim \lambda^2 G(a) \,.
\label{eq:6.23dieett}
\end{equation}
  
The following lemma (see \cite[Corollary~26]{DieKre08}) deals with the \emph{change of shift} for $N$-functions.

\begin{lemma}[change of shift]\label{lem:changeshift}
Let $G$ be an $N$-function with $\Delta_2(G),\Delta_2(G^*)<\infty$. Then for any $\eta>0$ there exists $c_\eta>0$, depending only on $\eta$ and $\Delta_2(G)$, such that for all ${\bf a}, {\bf b}\in\R^d$ and $t\geq0$
\begin{equation}
G_{|{\bf a}|}(t) \leq c_\eta G_{|{\bf b}|}(t) + \eta G_{|{\bf a}|}(|{\bf a}-{\bf b}|)\,.
\label{(5.4diekreu)}
\end{equation}
\end{lemma}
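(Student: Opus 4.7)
Set $a := |{\bf a}|$, $b := |{\bf b}|$, $\delta := |{\bf a}-{\bf b}|$; the reverse triangle inequality gives $|a-b|\leq \delta$. My strategy is to use the pointwise equivalence \eqref{(2.6b)} to reduce the statement to a comparison of $G(s)/s^2$ at the three shifted arguments $a+t$, $b+t$, $a+\delta$, governed by the $\Delta_2$-regularity of both $G$ and $G^*$. The triangle inequality $a+t \leq (b+t)+\delta$ and Proposition~\ref{prop:properties}(ii) together yield the rough estimate
\[
G(a+t) \leq c\bigl(G(b+t)+G(\delta)\bigr),
\]
which, combined with \eqref{(2.6b)} and \eqref{(2.6c)} to restore the shifted-function form on both sides, leads to the \emph{unweighted} preliminary bound $G_a(t) \leq c\,G_b(t) + c\,G_a(\delta)$ with $c$ depending only on $\Delta_2(G)$ and $\Delta_2(G^*)$. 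The task is then to upgrade the universal $c$ in front of $G_a(\delta)$ to the desired $\eta$.

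\textbf{Upgrading the constant to $\eta$ via a case split.} Fix $\eta > 0$ and pick a threshold $\tau = \tau(\eta)\in(0,1)$ to be specified. In the regime $\delta \leq \tau(b+t)$ one has $(1-\tau)(b+t) \leq a+t \leq (1+\tau)(b+t)$, so $a+t\sim b+t$ with constants depending only on $\tau$. Invoking \eqref{(2.6b)} and the $\Delta_2$-regularity of $G$ and $G^*$ (the latter used to match the prefactor $(a+t)^{-2}$ against $(b+t)^{-2}$) then gives $G_a(t) \leq c(\tau)\,G_b(t)$ directly, with no $G_a(\delta)$-contribution. In the opposite regime $\delta > \tau(b+t)$, in particular $t < \delta/\tau$, I exploit that $G_a$ is itself an $N$-function satisfying $\Delta_2$ uniformly in $a$, whose lower index equals $\min(2,g_1)>1$ (a direct consequence of \eqref{(2.6b)} together with \eqref{(2.1celokbis)}); hence the analogue of \eqref{(2.2ok)} for $G_a$ yields
\[
G_a(t) \leq c\,(t/\delta)^{\min(2,g_1)}\, G_a(\delta)\qquad\text{whenever } t\leq\delta.
\]
Choosing $\tau\leq (\eta/c)^{1/\min(2,g_1)}$ then renders the right-hand side $\leq \eta\,G_a(\delta)$ in this subregime. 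The complementary subregime $\delta < t \leq \delta/\tau$, where $b+t$, $t$ and $\delta$ are all mutually equivalent with constants depending on $\tau$, is absorbed into $c(\tau)G_b(t)$ by revisiting the Step~1 bound.

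\textbf{Main obstacle.} The delicate point is the borderline zone $t \sim \delta$, where neither the shift-comparability $a+t\sim b+t$ (which handles Case A) nor the strict-convexity smallness $(t/\delta)^{\min(2,g_1)}\ll 1$ (which handles Case B) is immediately available. The argument closes only because the lower index of the shifted function $G_a$ is strictly greater than $1$, which is the quantitative form of the uniform convexity of $G$ that powers the Young-type absorption built into \eqref{(5.4diekreu)}. One expects the resulting constant to behave like $c_\eta \sim \eta^{-\theta}$ for some $\theta>0$ depending on $\Delta_2(G)$, as is standard for change-of-shift estimates.
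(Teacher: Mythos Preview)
The paper does not give its own proof of this lemma; it simply cites \cite[Corollary~26]{DieKre08}. Your overall strategy---reduce via \eqref{(2.6b)} to comparing $G(a+t)/(a+t)^2$ against $G(b+t)/(b+t)^2$, then split cases according to the relative sizes of $t$, $\delta$ and $b+t$---is the standard one and is sound in outline.

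There is, however, a genuine gap in your execution of Case~B. In the sub-regime $t\leq\delta$ you write
\[
G_a(t)\leq c\,(t/\delta)^{\min(2,g_1)}\,G_a(\delta)
\]
and then claim that choosing $\tau\leq(\eta/c)^{1/\min(2,g_1)}$ forces the right-hand side to be $\leq\eta\,G_a(\delta)$. But nothing in Case~B forces $t/\delta\leq\tau$: the hypothesis $\delta>\tau(b+t)$ only gives $t<\delta/\tau$, an \emph{upper} bound on $t/\delta$ that is large, not small. Concretely, take $b=0$ and $t=\delta$; then $\delta>\tau(b+t)=\tau\delta$ holds for any $\tau<1$, yet $t/\delta=1$ and your bound reads $G_a(t)\leq c\,G_a(\delta)$ with the fixed structural constant $c$, which no choice of $\tau$ can shrink to $\eta$. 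Your ``Main obstacle'' paragraph correctly locates this borderline zone $t\sim\delta$ but does not actually close it.

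The repair is to move the internal split point in Case~B from $t=\delta$ to $t=\tau\delta$. For $t\leq\tau\delta$ the power bound now gives $G_a(t)\leq c\,\tau^{\min(2,g_1)}G_a(\delta)\leq\eta\,G_a(\delta)$, exactly as you intended. For $\tau\delta<t$ (still with $\delta>\tau(b+t)$) one has $\tau\delta<t\leq b+t<\delta/\tau<t/\tau^2$ and $|a-b|\leq\delta<t/\tau$, so all four quantities $t$, $\delta$, $b+t$, $a+t$ are pairwise comparable with constants depending only on $\tau$; then \eqref{(2.6b)} together with the $\Delta_2$-conditions on $G$ and $G^*$ yields $G_a(t)\leq c(\tau)\,G_b(t)$, which is absorbed into the $c_\eta\,G_b(t)$ term. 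With this correction (and merging your original ``complementary subregime'' $\delta<t\leq\delta/\tau$ into this enlarged second sub-case) your outline becomes a complete proof.
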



Let ${\bf{P}}_0,{\bf{P}}_1\in \mathbb{R}^{N\times n}$, $\theta\in[0,1]$ and define ${\bf{P}}_{\theta}:=(1-\theta){\bf{P}}_0+\theta{\bf{P}}_1$. Then the following result holds (see \cite[Lemma~20]{DIEETT08}).
\begin{lemma}
\label{technisch-mu}
Let $G$ be a $N$-function with $\Delta_2(G, G^*)<\infty.$ Then uniformly for all ${\bf{P}}_0,{\bf{P}}_1\in \mathbb{R}^{N\times n}$ with $|{\bf{P}}_0|+|{\bf{P}}_1|>0$ holds
\begin{equation*}
 \int_0^1 \frac{G'(|{\bf{P}}_{\theta}|)}{|{\bf{P}}_{\theta}|}\, \mathrm{d}\theta \sim \frac{G'(|{\bf{P}}_0|+|{\bf{P}}_1|)}{|{\bf{P}}_0|+|{\bf{P}}_1|}
\end{equation*}
where the constants only depend on $\Delta_2(G, G^*).$
\end{lemma}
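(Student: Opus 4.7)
The plan is to establish the two sides of the equivalence ``$\sim$'' separately, with constants depending only on $\Delta_2(G,G^\ast)$ (equivalently on $g_1,g_2$). Write $M:=|{\bf P}_0|+|{\bf P}_1|>0$ throughout.

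For the lower bound $\int_0^1 G'(|{\bf P}_\theta|)/|{\bf P}_\theta|\,\mathrm{d}\theta \gtrsim G'(M)/M$, I will exploit that $|{\bf P}_\theta|$ is comparable to $M$ on a fixed sub-interval of $[0,1]$. Assuming without loss of generality that $|{\bf P}_0|\geq M/2$ (otherwise exchange $\theta\leftrightarrow 1-\theta$), the triangle inequality gives $|{\bf P}_\theta|\geq |{\bf P}_0|-\theta|{\bf P}_1-{\bf P}_0|\geq M/2-\theta M\geq M/4$ for $\theta\in[0,1/4]$, while $|{\bf P}_\theta|\leq M$ always. The $\Delta_2$-condition on $G$ upgrades these two-sided bounds to $G'(|{\bf P}_\theta|)/|{\bf P}_\theta|\sim G'(M)/M$ pointwise on $[0,1/4]$, and integration yields the lower bound.

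For the upper bound, the difficulty is that $|{\bf P}_\theta|$ may collapse to zero when ${\bf P}_0$ and ${\bf P}_1$ are (anti-)parallel with balanced magnitudes, in which case $G'(t)/t\sim t^{g_1-2}$ blows up whenever $g_1<2$. I would split into two cases. If $|{\bf P}_1-{\bf P}_0|\leq M/4$, the parallelogram identity together with $2(|{\bf P}_0|^2+|{\bf P}_1|^2)\geq M^2$ gives $|{\bf P}_0+{\bf P}_1|^2\geq M^2-M^2/16$, so that for every $\theta\in[0,1]$,
\[
|{\bf P}_\theta|\geq \tfrac{1}{2}|{\bf P}_0+{\bf P}_1|-|\theta-\tfrac{1}{2}|\,|{\bf P}_1-{\bf P}_0|\geq \tfrac{\sqrt{15}-1}{8}\,M,
\]
and the upper bound follows at once from $\Delta_2$. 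Otherwise $|{\bf P}_1-{\bf P}_0|\geq M/4$, and I partition $[0,1]=\bigcup_{k\geq 0}E_k$ with $E_k:=\{\theta:2^{-k-1}M<|{\bf P}_\theta|\leq 2^{-k}M\}$. Since $\theta\mapsto{\bf P}_\theta$ parametrizes a straight segment at speed $|{\bf P}_1-{\bf P}_0|$, the preimage of any ball of radius $s$ centred at the origin is an interval of $\theta$-length at most $2s/|{\bf P}_1-{\bf P}_0|\lesssim s/M$, so $|E_k|\lesssim 2^{-k}$. On $E_k$, estimate \eqref{eq:2.2ok} with $a=2^{-k}$ gives $G'(|{\bf P}_\theta|)/|{\bf P}_\theta|\lesssim 2^{k(2-g_1)}G'(M)/M$, whence
\[
\int_0^1 \frac{G'(|{\bf P}_\theta|)}{|{\bf P}_\theta|}\,\mathrm{d}\theta\lesssim \frac{G'(M)}{M}\sum_{k\geq 0}2^{k(1-g_1)},
\]
and the geometric sum converges because $g_1>1$.

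The main obstacle is the dyadic step of the second case: one needs the pointwise power-type control on $G'/|\cdot|$ per scale provided by \eqref{eq:2.2ok} and the geometric measure bound $|E_k|\lesssim 2^{-k}$ to balance each other, which happens exactly when $g_1>1$. Reducing to the regime $|{\bf P}_1-{\bf P}_0|\gtrsim M$ via the parallelogram identity is what makes the speed-based measure estimate $|E_k|\lesssim 2^{-k}$ available in the first place.
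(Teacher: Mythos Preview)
Your argument is correct. The paper does not actually prove this lemma; it simply records it as a known result and defers to \cite[Lemma~20]{DIEETT08} for the proof. So there is nothing to compare against in the paper itself, and your self-contained argument fills that gap.

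A couple of minor remarks on presentation. In the lower bound you invoke $\Delta_2$ to conclude $G'(|{\bf P}_\theta|)/|{\bf P}_\theta|\sim G'(M)/M$ on $[0,1/4]$; strictly speaking the relevant input is the bound $G'(M)\leq 4^{g_2-1}G'(M/4)$ from \eqref{eq:2.2ok}, together with monotonicity of $G'$, which indeed follow from $\Delta_2(G,G^*)<\infty$ via the characterization \eqref{(2.1celok)}. In Case~2 of the upper bound, you should remark that the set $\{\theta:|{\bf P}_\theta|=0\}$ (if nonempty) is a single point and therefore has measure zero, so that $\bigcup_{k\geq 0}E_k$ really does exhaust $[0,1]$ up to a null set; this is implicit but worth stating. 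Finally, the sum $\sum_{k\geq 0}2^{k(1-g_1)}$ converges precisely because $g_1>1$, which is exactly the condition $\Delta_2(G^*)<\infty$; this is the only place the conjugate $\Delta_2$ enters, and you have identified it correctly.
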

In view of the previous considerations, the same proposition holds true for the shifted functions, uniformly in $a\ge 0$.

By $L^\phi$ and $W^{1,\phi}$ we denote the classical Orlicz and Orlicz-Sobolev spaces, i.\,e.\ $f \in L^\phi$ iff $\displaystyle\int
\phi(|{f}|)\,dx < \infty$ and $f \in W^{1,\phi}$ iff $f, D f \in L^\phi$. The space $W^{1,\phi}_0(\Omega)$ will denote the closure of $C^\infty_0(\Omega)$ in $W^{1,\phi}(\Omega)$. The following version of Sobolev-Poincar\'e inequality can be found in \cite[Lemma~7]{DIEETT08}.
\begin{theorem}[Sobolev-Poincar\'e inequality]\label{thm:sob-poincare}
Let $G$ be an $N$-function with $\Delta_2(G,G^*)<+\infty$. Then there exist numbers $\theta=\theta(n,\Delta_2(G,G^*))\in(0,1)$ and $K=K(n,N,\Delta_2(G,G^*))>0$ such that the following holds. If $B\subset \R^n$ is any ball with radius $R$ and ${\bf w}\in W^{1,G}(B,\R^N)$, then
\begin{equation}
\dashint_B G\left(\frac{|{\bf w}-({\bf w})_B|}{R}\right)\,\mathrm{d}x\leq K \left(\dashint_B G^\theta\left({|D{\bf w}|}\right)\,\mathrm{d}x\right)^\frac{1}{\theta}\,,
\label{eq:sob-poincare}
\end{equation}
where $\displaystyle({\bf w})_B:=\dashint_B {\bf w}(x)\,\mathrm{d}x$. Moreover, if ${\bf w}\in W^{1,G}_0(B,\R^N)$, then
\begin{equation*}
\dashint_B G\left(\frac{|{\bf w}|}{R}\right)\,\mathrm{d}x\leq K \left(\dashint_B G^\theta\left({|D{\bf w}|}\right)\,\mathrm{d}x\right)^\frac{1}{\theta}\,,
\label{eq:sob-poincare2}
\end{equation*}
where $K$ and $\theta$ have the same dependencies as before.
\end{theorem}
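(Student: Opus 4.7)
The inequality is the Orlicz-valued extension of the classical scalar Sobolev-Poincar\'e inequality. My plan is to derive it by combining the classical scalar inequality with the $\Delta_2$-bounds on $G$ and $G^*$ supplied by Proposition~\ref{prop:properties} (cf.~\cite{DIEETT08}).

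\emph{Reduction and classical step.} By the rescaling $y=(x-x_0)/R$, $\tilde{\bf w}(y):={\bf w}(x_0+Ry)/R$, one has $D\tilde{\bf w}(y)=D{\bf w}(x_0+Ry)$ and $|\tilde{\bf w}-(\tilde{\bf w})_{B_1}|=|{\bf w}-({\bf w})_B|/R$, so it suffices to prove the inequality on the unit ball $B_1$ with $R=1$. Set ${\bf v}:={\bf w}-({\bf w})_B$. The classical scalar Sobolev-Poincar\'e inequality yields, for any $q\in[1,n)$,
\[
\left(\dashint_B |{\bf v}|^{q^*}\,\mathrm{d}x\right)^{1/q^*}\leq c(n)\left(\dashint_B |D{\bf w}|^{q}\,\mathrm{d}x\right)^{1/q},\qquad q^*=\tfrac{nq}{n-q}.
\]
I would combine this with the layer-cake representation $\dashint_B G(|{\bf v}|)\,\mathrm{d}x=\int_0^\infty G'(t)\,|\{|{\bf v}|>t\}\cap B|/|B|\,\mathrm{d}t$, bounding each superlevel set by Chebyshev together with truncations of ${\bf v}$ at level $t$, or, equivalently, applying the classical inequality at exponents corresponding separately to the $g_1$- and $g_2$-growth regimes of $G$.

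\emph{Choice of $\theta$ and $\Delta_2$-interpolation.} I would then fix $\theta\in(0,1)$ sufficiently small, bounded below by a constant depending only on $n$, $\Delta_2(G)$ and $\Delta_2(G^*)$, so that the exponent of $|D{\bf w}|$ on the right-hand side of the classical inequality lies in the range $[g_1\theta,\,g_2\theta]$. In that range, Proposition~\ref{prop:properties}(i) provides the two-sided comparison
\[
c^{-1}\min(t^{g_1\theta},t^{g_2\theta})\leq G^\theta(t)\leq c\max(t^{g_1\theta},t^{g_2\theta}),
\]
which, together with the power-mean inequality, converts the scalar right-hand side $(\dashint_B|D{\bf w}|^{s}\,\mathrm{d}x)^{1/s}$ into the target Orlicz quantity $(\dashint_B G^\theta(|D{\bf w}|)\,\mathrm{d}x)^{1/(s\theta)}$. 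For the zero-boundary version I would extend ${\bf w}$ by zero to a slightly larger ball $B'\supset B$, apply the first inequality on $B'$, and observe that $({\bf w})_{B'}$ differs from zero only by a term controllable by $R^{-1}\dashint_{B}|{\bf w}|\,\mathrm{d}x$, which can be absorbed into the Orlicz right-hand side via Young's inequality (Proposition~\ref{prop:properties}(iii)).

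\emph{Main obstacle.} The delicate point is the choice of $\theta$: it must be strictly below the critical threshold (essentially $n/(n+g_2)$) so that both the classical Sobolev-Poincar\'e inequality is applicable \emph{and} the lower comparison $G^\theta(t)\gtrsim t^{g_i\theta}$ survives in each of the $g_1$- and $g_2$-regimes of $G$, while remaining bounded away from $0$ by a structural constant depending only on $n$, $N$ and the $\Delta_2$ data. Carrying out the interpolation between the two polynomial regimes on the level sets $\{|{\bf v}|\leq 1\}$ and $\{|{\bf v}|>1\}$ is the technical heart of the argument: it is standard (cf.~\cite[Lemma~7]{DIEETT08}) but requires care, since $G^\theta$ need not be itself convex and one cannot afford a constant blowing up as $\theta\to 0$.
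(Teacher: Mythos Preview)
The paper does not give its own proof of this theorem: it is simply quoted as \cite[Lemma~7]{DIEETT08} and used as a black box. So there is no ``paper's proof'' to compare against, and your sketch is already more than the paper offers.

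That said, your outline is more a description of ingredients than a proof. The passage from the scalar Sobolev--Poincar\'e inequality to the Orlicz estimate is not as mechanical as you suggest: the two-sided bound $c^{-1}\min(t^{g_1\theta},t^{g_2\theta})\le G^\theta(t)\le c\max(t^{g_1\theta},t^{g_2\theta})$ does \emph{not} by itself convert $\bigl(\dashint |D{\bf w}|^s\bigr)^{1/s}$ into $\bigl(\dashint G^\theta(|D{\bf w}|)\bigr)^{1/\theta}$, because the min/max switch at $t=1$ and you end up comparing different powers on the two sides. The actual argument in \cite{DIEETT08} does not proceed by splitting into $\{|{\bf v}|\le 1\}$ and $\{|{\bf v}|>1\}$; rather it uses the pointwise Riesz-potential bound $|{\bf w}-({\bf w})_B|\le c\,I_1(|D{\bf w}|\chi_B)$ together with a Hedberg-type estimate $I_1 f(x)\lesssim (Mf(x))^{1-1/n}\|f\|_1^{1/n}$ and the boundedness of the maximal operator on $L^{p_0}$ for some $p_0>1$ furnished by $\Delta_2(G^*)<\infty$. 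This yields $\theta$ explicitly (essentially $\theta=\frac{n}{n'}\cdot\frac{1}{p_0}$-type) and avoids the regime-splitting entirely. Your zero-boundary reduction via extension is fine in spirit, but again the absorption step you describe would need the first inequality already in hand with a controlled constant; in practice one just repeats the Riesz-potential argument using $|{\bf w}(x)|\le c\,I_1(|D{\bf w}|\chi_B)(x)$ for ${\bf w}\in W^{1,G}_0(B)$.
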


We conclude this section with the following useful lemma about an almost concave condition.

\begin{lemma}
Let $\Psi:[0,\infty)\to[0,\infty)$ be non-decreasing and such that $t\to \frac{\Psi(t)}{t}$ be non-increasing. Then there exists a concave function $\widetilde{\Psi}: [0,\infty)\to[0,\infty)$ such that
\begin{equation*}
\frac{1}{2} \widetilde{\Psi}(t) \leq \Psi(t) \leq \widetilde{\Psi}(t) \quad \mbox{ for all $t\geq0$. }
\end{equation*}
\label{lem:lemma2.2ok}
\end{lemma}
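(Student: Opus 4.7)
The plan is to define $\widetilde{\Psi}$ as the concave envelope (least concave majorant) of $\Psi$ on $[0,\infty)$:
\[
\widetilde{\Psi}(t) := \sup\Bigl\{\sum_{i=1}^{N} \lambda_i \Psi(s_i) : N\in\mathbb{N},\ s_i\geq 0,\ \lambda_i\geq 0,\ \sum_{i=1}^{N}\lambda_i=1,\ \sum_{i=1}^{N}\lambda_i s_i = t\Bigr\}.
\]
By construction $\widetilde{\Psi}$ majorizes $\Psi$ (take $N=1$, $s_1=t$, $\lambda_1=1$) and coincides with the pointwise infimum of all concave functions dominating $\Psi$, hence is itself concave (pointwise infima of concave functions are concave). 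The hypothesis that $s\mapsto \Psi(s)/s$ is non-increasing forces $\Psi(s)\leq s\,\Psi(1)$ for $s\geq 1$, so $\Psi$ has at most linear growth; this guarantees that the supremum above is finite at every $t$.

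For the nontrivial bound $\widetilde{\Psi}(t)\leq 2\Psi(t)$, I would fix $t>0$ together with an admissible combination $\sum_i\lambda_i s_i=t$ and split the indices according to whether $s_i\leq t$ or $s_i> t$. Monotonicity of $\Psi$ gives $\Psi(s_i)\leq \Psi(t)$ in the first regime, while in the second the non-increasingness of $s\mapsto\Psi(s)/s$ yields $\Psi(s_i)=s_i\,\tfrac{\Psi(s_i)}{s_i}\leq s_i\,\tfrac{\Psi(t)}{t}$. Summing,
\[
\sum_i\lambda_i\Psi(s_i) \;\leq\; \Psi(t)\sum_{s_i\leq t}\lambda_i + \frac{\Psi(t)}{t}\sum_{s_i>t}\lambda_i s_i \;\leq\; \Psi(t) + \frac{\Psi(t)}{t}\cdot t \;=\; 2\Psi(t),
\]
where I used $\sum_i\lambda_i\leq 1$ and $\sum_i\lambda_i s_i=t$. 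Passing to the supremum over all admissible combinations yields $\widetilde{\Psi}(t)\leq 2\Psi(t)$, which together with $\widetilde{\Psi}(t)\geq\Psi(t)$ gives the desired chain $\tfrac12\widetilde{\Psi}(t)\leq\Psi(t)\leq\widetilde{\Psi}(t)$.

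I do not foresee any serious obstacle: the delicate point is simply ensuring the concave envelope is well-defined and finite, which reduces to the linear-growth bound noted above, and the structural hypothesis on $\Psi(t)/t$ is \emph{precisely} what is needed to produce the factor $2$ in the two-regime estimate (the constant is sharp, as the example $\Psi(t)=\sqrt{t}$ already saturates it). One may alternatively restrict to $N=2$ in the supremum by Carathéodory's theorem, but this is inessential for the estimates above.
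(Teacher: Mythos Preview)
Your proof is correct and self-contained; the paper itself does not give a proof of this lemma but simply refers to \cite[Lemma~2.2]{OKJFA18}. The concave-envelope construction together with the two-regime splitting (monotonicity for $s_i\leq t$, the non-increasing ratio for $s_i>t$) is exactly the natural argument and yields the factor $2$ cleanly.

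One minor inaccuracy in your closing parenthetical: the example $\Psi(t)=\sqrt{t}$ does \emph{not} saturate the constant $2$, since $\sqrt{t}$ is already concave and therefore coincides with its own concave envelope, giving $\widetilde{\Psi}=\Psi$. This is only a side remark and does not affect the validity of the proof.
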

\begin{proof}
See \cite[Lemma~2.2]{OKJFA18}.   
\end{proof}



\subsection{A lemma of Bogowski\v{\i}} \label{sec:bogow}

The following lemma is a key tool to deal with the constraint of divergence free vector fields, as we have to construct
testing functions in divergence free form. 

\begin{lemma}
\label{lem:bogowski}
Let $B_r(x_0)$ be a ball in $\R^n$ and $f\in L^\gamma(B_r(x_0))$ with $(f)_{x_0,r}=0$, where $1<\gamma_1\leq \gamma \leq \gamma_2<+\infty$. Then there exists ${\bf w}\in W^{1,\gamma}_0(B_r(x_0),\R^n)$, weak solution to ${\rm div}\,{\bf w}=f$ in $B_r(x_0)$, such that
\begin{equation}
\int_{B_r(x_0)} |D{\bf w}|^t\,\mathrm{d}x \leq c(n,\gamma_1,\gamma_2) \int_{B_r(x_0)} |f|^t\,\mathrm{d}x
\label{eq:3.17bog}
\end{equation}
for every $t\in[\gamma_1,\gamma]$.
\end{lemma}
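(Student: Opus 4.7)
The plan is to realise ${\bf w}$ as the image of $f$ under the classical Bogowski\v{\i} solution operator, and to exploit the fact that this \emph{single} linear operator is simultaneously bounded on every $L^{t}$, $t\in(1,\infty)$, in order to extract \eqref{eq:3.17bog} for the whole range $t\in[\gamma_1,\gamma]$.

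A dilation and translation---setting ${\bf w}(x):=r\,{\bf w}_0\bigl((x-x_0)/r\bigr)$ with datum $\tilde f(y):=rf(x_0+ry)$---reduces matters to the unit ball $B_1=B_1(0)$, since both sides of \eqref{eq:3.17bog} rescale with the same factor $r^{n/t}$. On $B_1$ one fixes a kernel $\omega\in C^{\infty}_c(B_1)$ with $\int_{B_1}\omega=1$ and, for smooth mean-zero $f$, defines
\begin{equation*}
\mathcal{B}(f)(x):=\int_{B_1} f(y)\,(x-y)\int_1^{\infty}\omega\bigl(y+\xi(x-y)\bigr)\xi^{n-1}\,\mathrm{d}\xi\,\mathrm{d}y\,.
\end{equation*}
An elementary computation verifies $\mathrm{div}\,\mathcal{B}(f)=f$ pointwise and shows $\mathcal{B}(f)\in C^{\infty}_c(B_1,\R^n)$, while the derivative $D\mathcal{B}$ is a singular integral operator of Calder\'on--Zygmund type.

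By the classical Calder\'on--Zygmund theorem $\mathcal{B}$ therefore extends by density to a bounded linear map
\begin{equation*}
\mathcal{B}\colon L^{t}_0(B_1)\longrightarrow W^{1,t}_0(B_1,\R^n)\,,\qquad t\in(1,\infty)\,,
\end{equation*}
with norm $c(n,t)$, where $L^{t}_0$ denotes the mean-zero subspace. Crucially, $\mathcal{B}$ does \emph{not} depend on the target exponent, so setting ${\bf w}:=\mathcal{B}(f)$ produces one and the same vector field in $W^{1,\gamma}_0(B_1,\R^n)$. Combined with the embedding $L^{\gamma}(B_1)\hookrightarrow L^{t}(B_1)$ valid for $t\leq\gamma$ (recall $|B_1|<\infty$), this yields
\begin{equation*}
\int_{B_1}|D{\bf w}|^{t}\,\mathrm{d}x\leq c(n,t)^{t}\int_{B_1}|f|^{t}\,\mathrm{d}x\qquad \text{for every }t\in(1,\gamma]\,.
\end{equation*}

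To make the constant depend only on $n,\gamma_1,\gamma_2$ I would interpolate between the two endpoint bounds $c(n,\gamma_1)$ and $c(n,\gamma_2)$ via Riesz--Thorin, or equivalently invoke the continuity of $t\mapsto c(n,t)$ on the compact subinterval $[\gamma_1,\gamma_2]\subset(1,\infty)$; either way $\sup_{t\in[\gamma_1,\gamma_2]}c(n,t)<\infty$. Scaling back to $B_r(x_0)$ then preserves the estimate. The delicate point is precisely the uniformity of the constant in $t$: the Calder\'on--Zygmund constants blow up as $t\to 1^+$ or $t\to\infty$, which is why the restriction $1<\gamma_1\leq t\leq\gamma_2<\infty$ is imposed; away from the endpoints the uniform bound is automatic from compactness/interpolation.
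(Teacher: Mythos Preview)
Your sketch is correct and is precisely the standard Bogowski\v{\i} construction that the paper simply cites (the paper's own proof reads ``See \cite{Bog} and \cite[Chapter~3, Section~3]{galdibook}''), so you have in fact supplied the argument behind those references. One cosmetic slip: in the rescaling you should take $\tilde f(y)=f(x_0+ry)$ (no extra factor $r$), and the two integrals in \eqref{eq:3.17bog} then both pick up the factor $r^n$, not $r^{n/t}$; this does not affect the conclusion.
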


\begin{proof}
See \cite{Bog} and \cite[Chapter~3, Section~3]{galdibook}.
\end{proof}

{\begin{corollary} \label{lem:Hbogowski}
Let $B_r(x_0)$ be a ball in $\R^n$, and $G$ be an $N$-function such that $\Delta_2(G)<+\infty$ and $\Delta_2(G^*)<+\infty$. Let $G(|f|)\in L^1(B_r(x_0))$ with $(f)_{x_0,r}=0$. Then there exists ${\bf w}\in W^{1,G}_0(B_r(x_0),\R^n)$, solution to ${\rm div}\,{\bf w}=f$ a.e. in $B_r(x_0)$, such that
\begin{equation}
\int_{B_r(x_0)} G(|D{\bf w}|)\,\mathrm{d}x \leq c_G \int_{B_r(x_0)} G(|f|)\,\mathrm{d}x\,.
\label{eq:3.17bogtris}
\end{equation}
\end{corollary}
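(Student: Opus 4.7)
The plan is to derive the Orlicz modular estimate \eqref{eq:3.17bogtris} from the family of $L^\gamma$-estimates in Lemma~\ref{lem:bogowski} by exploiting the fact that the solution operator $\mathcal{B}\colon f\mapsto {\bf w}$ produced by Bogowski\u{\i}'s construction has a universal structure as a singular integral of Calder\'on--Zygmund type. Fixing once and for all a cut-off $\omega\in C_0^\infty(B_r(x_0))$ with $\int\omega=1$, the classical representation (see \cite{Bog} or \cite[Chapter~3]{galdibook}) writes $\mathcal{B}$ as an explicit integral operator whose differential $D\mathcal{B}$ splits as $\mathcal{T}+\mathcal{R}$, where $\mathcal{T}$ is a principal-value singular integral with a standard Calder\'on--Zygmund kernel (homogeneous of degree $-n$ on the diagonal, with vanishing spherical mean) and $\mathcal{R}$ is a remainder with a uniformly bounded kernel. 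This decomposition is independent of the choice of target function space and holds as soon as $f\in L^1(B_r(x_0))$ has vanishing mean.

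Given this decomposition, the core step is the Orlicz boundedness of Calder\'on--Zygmund operators. Since $\Delta_2(G)<\infty$ and $\Delta_2(G^*)<\infty$, the Boyd indices of $L^G$ lie strictly between $1$ and $\infty$; more concretely, \eqref{(2.1celok)} sandwiches them between $1/g_2$ and $1/g_1$. Hence $\mathcal{T}$ extends to a bounded operator on $L^G(B_r(x_0))$ with norm depending only on $n$, $\Delta_2(G)$ and $\Delta_2(G^*)$, either by direct Orlicz Calder\'on--Zygmund theory or, alternatively, by a Rubio de Francia extrapolation from the $L^\gamma$-bounds of Lemma~\ref{lem:bogowski} applied on the range $\gamma\in[g_1,g_2]$. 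The remainder $\mathcal{R}$ is trivially $L^G$-bounded by Jensen's inequality. Summing the two contributions yields
\begin{equation*}
\int_{B_r(x_0)} G(|D{\bf w}|)\,\mathrm{d}x \;\leq\; c_G \int_{B_r(x_0)} G(|f|)\,\mathrm{d}x,
\end{equation*}
which is \eqref{eq:3.17bogtris}.

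To upgrade the solution to membership in $W^{1,G}_0$ (and not merely $W^{1,G}$ with a weak vanishing trace), I would approximate $f$ in the modular sense by smooth zero-mean functions $f_k\in C^\infty(\overline{B_r(x_0)})$, which is possible thanks to $\Delta_2(G)<\infty$. Applying the classical theory gives ${\bf w}_k:=\mathcal{B}(f_k)\in C_0^\infty(B_r(x_0);\R^n)$, and the uniform Orlicz estimate just established makes $\{{\bf w}_k\}$ Cauchy in $W^{1,G}_0(B_r(x_0);\R^n)$; its limit is the desired ${\bf w}$. The main obstacle is the first step: verifying the Calder\'on--Zygmund decomposition of $D\mathcal{B}$ with constants that scale correctly in $r$ and depend only on $n$. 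Once this is accepted from \cite{Bog,galdibook}, the rest of the argument is the standard extrapolation/interpolation machinery in Orlicz spaces applied to a singular integral of CZ type.
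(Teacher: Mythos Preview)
Your argument is correct and follows the standard route: represent $D\mathcal{B}$ as a Calder\'on--Zygmund singular integral plus a bounded remainder, then invoke the Orlicz boundedness of CZ operators under the two-sided $\Delta_2$ assumption (equivalently, nontrivial Boyd indices), and finally pass to $W^{1,G}_0$ by density. This is essentially the approach behind the reference the paper cites, \cite[Corollary~4.2]{KrepRuz}; the paper itself gives no independent argument and simply defers to that source. Your sketch therefore supplies the content that the paper outsources.

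One small remark on your closing caveat: the scale-invariance in $r$ is automatic once you note that the Bogowski\u{\i} operator on $B_r(x_0)$ is obtained from the one on $B_1(0)$ by the affine change of variables $x\mapsto x_0+rx$, under which both sides of \eqref{eq:3.17bogtris} transform identically (the Jacobian factors cancel, and $|D{\bf w}|$ and $|f|$ scale the same way since ${\rm div}$ is first order). So there is no genuine obstacle there; the CZ constants on the unit ball depend only on $n$ and the fixed bump $\omega$, and the Orlicz operator norm then depends only on $n$ and $\Delta_2(G,G^*)$.
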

\begin{proof}
See, e.g.,  \cite[Corollary~4.2]{KrepRuz}.
\end{proof}}

%
%

\subsection{Affine functions} \label{sec:affinefunct}

We define the space of \emph{traceless affine functions}

\begin{equation*}
\mathcal{T}(\R^n):=\{\bm\ell:\R^n\to\R^n:\,\,\, \mbox{ $\bm\ell$ is affine and } {\rm tr}(D \bm\ell)=0\}\,.
\end{equation*}

In particular, each $\bm\ell\in \mathcal{T}$ satisfies ${\rm div}\,\bm\ell=0$. \\
\noindent
The set of \emph{rigid motions} in $\R^n$ is defined as
\begin{equation*}
\mathcal{R}:=\{{\bf c}+{\bf S}x:\,\, {\bf c}\in\R^n\,,{\bf S}\in\R^{n\times n}\,, ^T{\bf S}=-{\bf S}\}\,,
\end{equation*}
the set of affine functions with skew-symmetric gradient. 

{Let $x_0\in\Omega$ and $r>0$. We define the affine function
\begin{equation}
\bm\ell_{x_0,r}(x):= (\bfu)_{x_0,r} + (D\bfu)_{x_0,r}(x-x_0)\,.
\label{eq:affinefunctionl}
\end{equation}
Note that $\bm\ell_{x_0,r}\in \mathcal{T}(\R^n)$ and
\begin{equation}
(\bfu - \bm\ell_{x_0,r})_{x_0,r}={\bf 0}\,, \,\,\, \E\bm\ell_{x_0,r}=(\E\bfu)_{x_0,r}\,,\,\,\, (\W(\bfu-\ell_{x_0,r}))_{x_0,r}={\bf 0}\,.
\end{equation}}

Let $\Omega$ be a bounded open subset of $\R^n$ and let $x_0$ be its centroid; i.e.,
\begin{equation*}
x_0 := \frac{1}{|\Omega|} \int_\Omega x\,\mathrm{d}x\,.
\end{equation*}
For every $\bfu\in L^1(\Omega;\R^n)$ and $x\in\Omega$ we set
\begin{equation}
({\bf\mathcal{P}}_\Omega \bfu)_i(x):= c_i + S_{ij}(x-x_0)_j\,,
\label{eq:acmingrigidmotion}
\end{equation}
where 
\begin{equation}
\begin{split}
c_i & := (\bfu)_i = \dashint_\Omega u_i(x)\,\mathrm{d}x\,, \\
S_{ij} & := \frac{\dashint_\Omega\{ [\bfu-(\bfu)]_i(x-x_0)_j-[\bfu-(\bfu)]_j(x-x_0)_i\}\,\mathrm{d}x}{\dashint_\Omega [|(x-x_0)_i|^2+|(x-x_0)_j|^2]\,\mathrm{d}x}\,.
\end{split}
\end{equation} 
As shown in \cite[Proposition~2.6]{acerbimingione}, ${\bf\mathcal{P}}_\Omega \bfu\in\mathcal{R}$ for all $\bfu$; this implies, in particular, that $\E ({\bf\mathcal{P}}_\Omega \bfu)={\bf 0}$. Moreover, if $\bfu\in L^2(\Omega;\R^n)$ and $\Omega$ is a ball, then
\begin{equation}
\|\bfu-{\bf\mathcal{P}}_\Omega \bfu\|_{L^2(\Omega;\R^n)}=\min\{\|\bfu - {\bf r}\|_{L^2(\Omega;\R^n)}:\,\, {\bf r}\in\mathcal{R}\}\,.
\label{eq:Pminimizes}
\end{equation}
{Finally, we recall the following Korn-type inequality \cite[eq. (2.21)]{acerbimingione}:
\begin{equation}
\frac{1}{t}\left(\dashint_{B_r} \left|\frac{\bfu-{\mathcal{P}_{B_r} \bfu}}{r}\right|^t\,\mathrm{d}x\right)^\frac{1}{t} \leq c \left(\dashint_{B_r} \left|\E\bfu\right|^p\,\mathrm{d}x\right)^\frac{1}{p}
\label{eq:2.21acming}
\end{equation}
for every $t\in[1,p^*]$.
}

\subsection{Sobolev-Korn inequality} \label{sec:sobkorn}

A standard tool in order to obtain local bounds of $D\bfu$ in terms of $\E\bfu$ on the scale of $L^p$ spaces is the following Sobolev-Korn inequality (see, e.g., \cite{mos}).

\begin{lemma}
Let $1<p\leq r \leq q$ and $\bfu\in L^p(B_\varrho(x_0),\R^n)$ be such that $\E\bfu\in L^r(B_\varrho(x_0),\R^{n\times n}_{\rm sym})$. Then $D\bfu\in L^r(B_\varrho(x_0),\R^{n^2})$ and for some constant $c=c(n,p,q)$ it holds that
\begin{equation}
\dashint_{B_\varrho(x_0)}|D\bfu|^r\,\mathrm{d}x \leq c \dashint_{B_\varrho(x_0)}|\E\bfu|^r\,\mathrm{d}x + c \left(\dashint_{B_\varrho(x_0)}\left|\frac{\bfu - (\bfu)_{x_0,\varrho}}{\varrho}\right|\,\mathrm{d}x\right)^r\,.
\label{eq:2.9bogel}
\end{equation}
If, in addition, $\bfu={\bf 0}$ on $\partial B_\varrho(x_0)$, then
\begin{equation}
\dashint_{B_\varrho(x_0)}|D\bfu|^r\,\mathrm{d}x \leq c \dashint_{B_\varrho(x_0)}|\E\bfu|^r\,\mathrm{d}x \,.
\label{eq:2.10bogel}
\end{equation}
\end{lemma}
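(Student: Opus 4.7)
The plan is to reduce the statement to the classical second Korn inequality in $L^r$ and then absorb the rigid-motion term via the explicit formula \eqref{eq:acmingrigidmotion} for the projection $\mathcal{P}_{B_\varrho}\bfu$. Since
\begin{equation*}
D\bfu = \E\bfu + \mathcal{W}\bfu,
\end{equation*}
it suffices to estimate $\mathcal{W}\bfu$ in $L^r(B_\varrho(x_0))$, and the main observation is the distributional identity
\begin{equation*}
\partial_k (\mathcal{W}\bfu)_{ij} = \partial_i (\E\bfu)_{jk} - \partial_j (\E\bfu)_{ik},
\end{equation*}
which expresses the derivatives of the skew-symmetric part in terms of derivatives of the symmetric part. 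This is the content of the classical second Korn inequality, and by Calder\'on–Zygmund theory it yields, on the unit ball $B_1\subset\R^n$,
\begin{equation*}
\|\bfv - \mathcal{P}_{B_1}\bfv\|_{W^{1,r}(B_1)} \leq c(n,r)\,\|\E\bfv\|_{L^r(B_1)}
\end{equation*}
for every $\bfv\in L^r(B_1;\R^n)$ with $\E\bfv\in L^r$, where $\mathcal{P}_{B_1}\bfv\in\mathcal{R}$ is the rigid-motion projection in \eqref{eq:acmingrigidmotion}. Because $[p,q]\Subset(1,\infty)$, the constant $c(n,r)$ is bounded uniformly in $r\in[p,q]$ by some $c(n,p,q)$.

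Next I would scale to $B_\varrho(x_0)$ via $\bfv(y):=\bfu(x_0+\varrho y)$: the derivatives $D\bfu$, $\E\bfu$ transform by a factor $1/\varrho$, rigid motions map to rigid motions, and the averaged $L^r$ inequality becomes
\begin{equation*}
\dashint_{B_\varrho(x_0)} |D(\bfu - \mathcal{P}_{B_\varrho}\bfu)|^r\,\mathrm{d}x \leq c(n,p,q)\dashint_{B_\varrho(x_0)}|\E\bfu|^r\,\mathrm{d}x.
\end{equation*}
Since $\E\mathcal{P}_{B_\varrho}\bfu = \mathbf{0}$ (a rigid motion has skew-symmetric gradient), one has $|D\bfu|^r \leq c(|D(\bfu-\mathcal{P}_{B_\varrho}\bfu)|^r + |D\mathcal{P}_{B_\varrho}\bfu|^r)$, so it remains to estimate the constant matrix $D\mathcal{P}_{B_\varrho}\bfu = S$.

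For the first inequality, I would invoke the explicit formula \eqref{eq:acmingrigidmotion}: each entry $S_{ij}$ is a quotient whose numerator is $\dashint_{B_\varrho}[\bfu - (\bfu)_{x_0,\varrho}]_i (x-x_0)_j\,\mathrm{d}x$ and whose denominator scales like $\varrho^2$. Bounding the numerator by $\varrho \dashint_{B_\varrho} |\bfu - (\bfu)_{x_0,\varrho}|\,\mathrm{d}x$ gives
\begin{equation*}
|S| \leq c(n)\,\dashint_{B_\varrho(x_0)}\left|\frac{\bfu - (\bfu)_{x_0,\varrho}}{\varrho}\right|\mathrm{d}x,
\end{equation*}
and raising to the $r$-th power produces exactly the second term on the right-hand side of \eqref{eq:2.9bogel}, noting that $(\dashint |\cdot|)^r$ is what appears (no Jensen upgrade is needed). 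For the boundary case $\bfu = 0$ on $\partial B_\varrho(x_0)$: the only rigid motion ${\bf r}\in\mathcal{R}$ vanishing on a whole sphere is ${\bf r}\equiv {\bf 0}$ (for $n\geq 2$ this follows immediately from $\mathbf{c}+S(x-x_0)=0$ on $\partial B_\varrho$ forcing $\mathbf{c}=0$ and $S=0$). Hence by the minimizing property \eqref{eq:Pminimizes}, $\mathcal{P}_{B_\varrho}\bfu=\mathbf{0}$, and the rigid-motion term disappears, yielding \eqref{eq:2.10bogel}. Equivalently, one may apply Korn's first inequality for $W^{1,r}_0$ directly.

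The step I expect to require the most care is ensuring uniformity of the Korn constant as $r$ ranges over $[p,q]$; this is why the ambient assumption $1<p\leq r\leq q$ matters and why the constant is allowed to depend on $p$ and $q$. Everything else is bookkeeping: the scaling from $B_1$ to $B_\varrho(x_0)$, the triangle inequality splitting $D\bfu$ into $D(\bfu-\mathcal{P}\bfu)$ and $D\mathcal{P}\bfu$, and the explicit estimate of $S$.
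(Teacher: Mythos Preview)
The paper does not prove this lemma; it states it as a known result with a reference to \cite{mos}. Your argument for \eqref{eq:2.9bogel} is correct and standard: the second Korn inequality controls $D(\bfu-\mathcal{P}_{B_\varrho}\bfu)$ by $\E\bfu$ in $L^r$ uniformly for $r\in[p,q]$, and the explicit formula \eqref{eq:acmingrigidmotion} bounds the constant matrix $S=D\mathcal{P}_{B_\varrho}\bfu$ by $\dashint_{B_\varrho}|\bfu-(\bfu)_{x_0,\varrho}|/\varrho\,\mathrm{d}x$ exactly as you describe.

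There is, however, a genuine gap in your first argument for \eqref{eq:2.10bogel}. The claim that $\mathcal{P}_{B_\varrho}\bfu=\mathbf{0}$ whenever $\bfu=\mathbf{0}$ on $\partial B_\varrho$ is false, and it does not follow from the minimizing property \eqref{eq:Pminimizes}: the $L^2$-closest rigid motion to $\bfu$ has no reason to vanish on the sphere just because $\bfu$ does. For a concrete counterexample take $\bfu(x)=(1-|x|^2)^2(-x_2,x_1,0,\dots,0)$ on $B_1$; then $\bfu=\mathbf{0}$ on $\partial B_1$ and $(\bfu)_{B_1}=\mathbf{0}$ by odd symmetry, but the numerator of $S_{12}$ in \eqref{eq:acmingrigidmotion} equals $-\dashint_{B_1}(1-|x|^2)^2(x_1^2+x_2^2)\,\mathrm{d}x\neq 0$, so $D\mathcal{P}_{B_1}\bfu\neq\mathbf{0}$. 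Your fallback --- applying Korn's first inequality for $W^{1,r}_0$ directly --- is the correct route and yields \eqref{eq:2.10bogel} immediately; you should simply drop the $\mathcal{P}_{B_\varrho}\bfu=\mathbf{0}$ claim and keep that.
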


We also need the following version of the Korn's  inequality in Orlicz spaces (see \cite[Lemma~3.3]{DKS}).

\begin{lemma}\label{lem:kornorlicz}
Let $B\subset\R^n$ be a ball. Let $G$ be an $N$-function such that both $G$ and $G^*$ satisfy the $\Delta_2$-condition. Then for all $\bfu\in W^{1,G}_0(B,\R^n)$ (with $\left(\W\bfu\right)_B={\bf 0}$) the inequality
\begin{equation*}
\int_B G(|D\bfu|)\,\mathrm{d}x \leq c_{\rm Korn} \int_B G(|\E\bfu|)\,\mathrm{d}x
\end{equation*}
holds, and $c_{\rm Korn}=c_{\rm Korn}(\Delta_2(G,G^*))$.
\end{lemma}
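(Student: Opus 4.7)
The plan is to reduce the claim to the boundedness of Calderón--Zygmund operators on Orlicz spaces, which is available precisely because both $G$ and $G^{*}$ satisfy the $\Delta_{2}$--condition.

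First, by density and the $\Delta_{2}$--property of $G$, it suffices to prove the inequality for $\bfu\in C^{\infty}_{c}(B,\R^{n})$. Extending such $\bfu$ by zero, one regards $\bfu$ and $\E\bfu$ as elements of $W^{1,G}(\R^{n},\R^{n})$ and $L^{G}(\R^{n},\R^{n\times n}_{\rm sym})$, respectively. The key algebraic identity is the distributional Korn relation
$$\partial_{k}(\W\bfu)_{ij}=\partial_{j}(\E\bfu)_{ik}-\partial_{i}(\E\bfu)_{jk},$$
which follows directly from $\partial_{j}u^{i}=(\E\bfu)_{ij}+(\W\bfu)_{ij}$. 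Because $\bfu$ is compactly supported, this relation can be inverted via the Newtonian potential: writing $(\W\bfu)_{ij}=\Delta^{-1}\Delta(\W\bfu)_{ij}$ and using the identity above expresses each component of $\W\bfu$ as a finite linear combination of double Riesz transforms $R_{k}R_{l}$ acting componentwise on $\E\bfu$. Since $D\bfu=\E\bfu+\W\bfu$, the full gradient is recovered from $\E\bfu$ through a matrix of Calderón--Zygmund operators.

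Second, I would invoke the classical fact that a scalar Calderón--Zygmund operator $T$ satisfies
$$\int_{\R^{n}}G(|TF|)\,\mathrm{d}x\le c\int_{\R^{n}}G(|F|)\,\mathrm{d}x$$
whenever $\Delta_{2}(G,G^{*})<\infty$, with a constant depending only on $\Delta_{2}(G,G^{*})$ and the kernel. Applying this componentwise to $F=\E\bfu$ (extended by zero) and collecting the pieces gives
$$\int_{B}G(|D\bfu|)\,\mathrm{d}x=\int_{\R^{n}}G(|D\bfu|)\,\mathrm{d}x\le c\int_{\R^{n}}G(|\E\bfu|)\,\mathrm{d}x=c\int_{B}G(|\E\bfu|)\,\mathrm{d}x,$$
which is the desired estimate with $c_{\rm Korn}=c_{\rm Korn}(\Delta_{2}(G,G^{*}))$.

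The main obstacle is the Orlicz boundedness of singular integrals used in the second step. The two-sided $\Delta_{2}$--condition is equivalent to the non-degeneracy of the lower and upper Matuszewska--Orlicz indices of $G$, but translating this into an $L^{G}$-bound for Calderón--Zygmund operators is not elementary: it is typically obtained either via Rubio de Francia extrapolation from the Muckenhoupt-weighted $L^{p}$ theory, or through a sharp maximal-function argument in the spirit of Acerbi--Mingione. Modulo this standard but non-trivial analytic input, the Korn inequality follows immediately from the structural identity above, and the dependence of the constant is exactly as claimed.
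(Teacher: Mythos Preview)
The paper does not actually prove this lemma; it simply quotes the result from \cite[Lemma~3.3]{DKS}. Your sketch is the standard route to Korn's inequality in Orlicz spaces and is essentially what underlies that reference: the second-order identity $\partial_k\partial_l u^i = \partial_l(\E\bfu)_{ik}+\partial_k(\E\bfu)_{il}-\partial_i(\E\bfu)_{kl}$ lets one recover $D\bfu$ from $\E\bfu$ through iterated Riesz transforms, and the $\Delta_2$-condition on both $G$ and $G^*$ is precisely what makes Calder\'on--Zygmund operators bounded on $L^G$. Your identification of the analytic input (extrapolation or sharp maximal function estimates) is accurate, and the dependence of the constant is right.

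One point deserves attention. The parenthetical ``(with $(\W\bfu)_B=\mathbf{0}$)'' in the statement is meant to cover a \emph{second} case, namely $\bfu\in W^{1,G}(B,\R^n)$ not vanishing on $\partial B$ but with vanishing mean skew-symmetric gradient; this is how the lemma is actually applied later in the paper (e.g.\ to $\bfu-\bm\ell_{x_0,\varrho}$). Your extension-by-zero argument handles only the $W^{1,G}_0$ case: if $\bfu$ does not vanish on $\partial B$, the zero extension produces a surface measure in $D\bfu$ and the Riesz-transform representation is no longer valid. For the second case one needs a local representation on the ball---either via the negative-norm theorem in Orlicz spaces (Ne\v{c}as-type, as in \cite{breitdiening,DRS}) or via a Bogovski\u{\i}-type integral operator---before invoking the same Calder\'on--Zygmund machinery. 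This is again standard under $\Delta_2(G,G^*)<\infty$, but it is a genuinely separate step that your sketch omits.
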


{The following lemma is useful to derive reverse H\"older estimates. It is a variant of the results by Gehring~\cite{Gehring} and Giaquinta-Modica~\cite[Theorem~6.6]{giustibook}.
\begin{lemma}\label{lem:gehring}
Let $B_0\subset\R^n$ be a ball, $f\in L^1(B_0)$, and $g\in L^{\sigma_0}(B_0)$ for some $\sigma_0>1$. Assume that for some $\theta\in(0,1)$, $c_1>0$ and all balls $B$ with $2B\subset B_0$
\begin{equation*}
\dashint_B |f|\,\mathrm{d}x\leq c_1 \left(\dashint_{2B}|f|^\theta\,\mathrm{d}x\right)^{1/\theta} + \dashint_{2B}|g|\,\mathrm{d}x\,.
\end{equation*}
Then there exist $\sigma_1>1$ and $c_2>1$ such that $g\in L^{\sigma_1}_{\rm loc}(B)$ and for all $\sigma_2\in[1,\sigma_1]$
\begin{equation*}
\left(\dashint_{B}|f|^{\sigma_2}\,\mathrm{d}x\right)^{1/{\sigma_2}}\leq c_2 \dashint_{2B}|f|\,\mathrm{d}x + c_2 \left(\dashint_{2B}|g|^{\sigma_2}\,\mathrm{d}x\right)^{1/{\sigma_2}}\,.
\end{equation*}
\end{lemma}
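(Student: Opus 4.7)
The plan is to follow the classical Gehring--Giaquinta--Modica scheme via a level-set / Calder\'on--Zygmund stopping-time decomposition, refined so that the non-homogeneous term involving $g$ is tracked separately and appears with the correct power on the right-hand side. Observe first that since $\theta<1$, Jensen's inequality gives $(\dashint_{2B}|f|^\theta)^{1/\theta}\le \dashint_{2B}|f|$, so the hypothesis is of reverse H\"older type; it is precisely the availability of an exponent $\theta$ strictly less than one that makes a self-improvement of integrability possible.

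Concretely, I would fix concentric balls $B_1\Subset B_2\subset B_0$ and work on $B_2$. For each level $\lambda$ exceeding a threshold $\lambda_0$ proportional to $\dashint_{B_2}|f|^\theta+\dashint_{B_2}|g|^\theta$, a Vitali-type stopping-time argument applied to $|f|^\theta$ produces a family of pairwise disjoint balls $\{B_{r_i}(x_i)\}$ with $2B_{r_i}(x_i)\subset B_2$ such that
\begin{equation*}
\dashint_{B_{r_i}(x_i)}|f|^\theta\,\mathrm{d}x > \lambda,\qquad \dashint_{B_{2r_i}(x_i)}|f|^\theta\,\mathrm{d}x \le \lambda,
\end{equation*}
and $|f|^\theta\le\lambda$ almost everywhere on $B_1\setminus\bigcup_i B_{r_i}(x_i)$. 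Plugging the hypothesis into each selected ball and using the stopping-time control on $2B_{r_i}(x_i)$ yields
\begin{equation*}
\dashint_{B_{r_i}(x_i)}|f|\,\mathrm{d}x \le c_1\lambda^{1/\theta} + \dashint_{B_{2r_i}(x_i)}|g|\,\mathrm{d}x.
\end{equation*}

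Summing over $i$ and splitting the $g$-integral over the level set $\{|g|>\eta\mu\}$ and its complement, with $\mu:=\lambda^{1/\theta}$, I would derive a distributional inequality of the form
\begin{equation*}
\int_{\{|f|>K\mu\}\cap B_1}|f|\,\mathrm{d}x \le \varepsilon\int_{\{|f|>\mu\}\cap B_2}|f|\,\mathrm{d}x + c\int_{\{|g|>\eta\mu\}\cap B_2}|g|\,\mathrm{d}x
\end{equation*}
for every $\mu>\mu_0$, where $\varepsilon$ can be made arbitrarily small by choosing $K$ large enough (this is the crucial gain coming from $\theta<1$). Multiplying by $\mu^{\sigma_2-2}$, integrating for $\mu\in(\mu_0,+\infty)$ and applying Fubini transforms this into a bound of the form $\|f\|_{L^{\sigma_2}(B_1)}^{\sigma_2}\le \varepsilon\|f\|_{L^{\sigma_2}(B_2)}^{\sigma_2}+(\text{lower order})$. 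Absorbing the $\varepsilon$-term by a standard hole-filling / iteration argument on a nested sequence of radii gives the conclusion on $B$ with $2B\subset B_0$, for every $\sigma_2\in[1,\sigma_1]$ where $\sigma_1>1$ is determined quantitatively by the smallness of $\varepsilon$.

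The main obstacle, in my view, is quantitative bookkeeping: one must ensure both that the gain exponent $\sigma_1$ and the constant $c_2$ depend only on $n$, $c_1$, $\theta$ and $\sigma_0$, and that the threshold $\mu_0$ scales correctly with $\dashint_{2B}|f|$ and with $\|g\|_{L^{\sigma_0}}$ so that its contribution can be reabsorbed into the two terms on the right-hand side of the claim. The constraint $\sigma_2\le\sigma_0$ comes from requiring finiteness of the $g$-term in the Fubini step, and the passage from $B_1,B_2$ to $B,2B$ is taken care of by a covering/absorption lemma in the spirit of \cite[Lemma~6.1]{giustibook}. The computation in the purely homogeneous case $g\equiv0$ is carried out in \cite[Theorem~6.6]{giustibook}, and the modification required by the present inhomogeneous formulation is exactly the level-set splitting sketched above.
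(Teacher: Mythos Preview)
The paper does not give its own proof of this lemma: it merely states it and attributes it as ``a variant of the results by Gehring~\cite{Gehring} and Giaquinta--Modica~\cite[Theorem~6.6]{giustibook}''. Your sketch is exactly the classical Gehring--Giaquinta--Modica level-set/stopping-time argument that underlies those references, so your proposal is correct and aligned with what the paper invokes.
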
}

\subsection{$\mathcal{A}$-Stokes functions and $\mathcal{A}$-Stokes approximation} \label{sec:Astokes}

{Let $\mathcal{A}$ be a bilinear form on $\R_{\rm sym}^{n\times n}$. We say that $\mathcal{A}$ is \emph{strongly elliptic in the sense of Legendre-Hadamard} if for all $\bm\eta,\bm\zeta\in\R_{\rm sym}^{n\times n}$ it holds that 
\begin{equation}
 \kappa_A\abs{\bm\eta}\abs{\bm\zeta}\leq \mathcal{A}(\bm\eta,\bm\zeta)\leq L_{\mathcal{A}} \abs{\bm\eta} \abs{\bm\zeta}
\label{eq:LegHam}
\end{equation}
for some $L_{\mathcal{A}}\geq \kappaA>0$. The biggest possible constant~$\kappaA$ is called the ellipticity constant of~$\mathcal{A}$. By $\abs{\mathcal{A}}$ we denote the Euclidean norm of~$\mathcal{A}$.  
We say that a Sobolev function $\bfw$ on a ball~$B_\varrho(x_0)$ is
\emph{$\mathcal{A}$-Stokes} on $B_\varrho(x_0)$ if it satisfies $- {\rm div}\,(\mathcal{A}\E \bfw)=0$ in the sense of distributions; i.e.,
\begin{equation*}
\int_{B_\varrho(x_0)}\mathcal{A}(\E{\bf w},\E\bm\psi)\,\mathrm{d}x=0\,,\quad \mbox{ for all }\bm\psi\in C^\infty_{0,{\rm div}}(B_\varrho(x_0),\R^n)\,.
\end{equation*}
Now, we address the problem of the \emph{$\mathcal{A}$-Stokes approximation}: given a Sobolev function $\bfv$ on a ball~$B$, we want to find an $\mathcal{A}$-Stokes function~${\bf h}$ which is ``close'' the function $\bfv$. 
It will be the $\mathcal{A}$-Stokes function with the same boundary values as $\bfv$; i.e., a Sobolev function~${\bf h}$
which satisfies
\begin{equation}
  \label{eq:calA1}
  \begin{cases}
    -{\rm div} (\mathcal{A} \E {\bf h})= 0 &\qquad\text{on $B$}
    \\
    {\bf h}= \bfv & \qquad\text{on $\partial B$}
  \end{cases}
\end{equation}
in the sense of distributions.
Setting ${\bf w} := {\bf h} - \bfv$, then~\eqref{eq:calA1} is equivalent to finding
a Sobolev function~${\bf w}$ which satisfies
\begin{equation}
  \label{eq:calA2}
  \begin{cases}
    -{\rm div}(\mathcal{A} \E {\bf w}) = -{\rm div}(\mathcal{A}
    \E \bfv) &\qquad\text{on $B$}
    \\
    {\bf w}= \bfzero &\qquad\text{on $\partial B$}
  \end{cases}
\end{equation}
in the sense of distributions.}
{We formulate the $\mathcal{A}$-Stokes approximation result for Stokes systems with general growth. The proof  for Stokes systems with $p$-growth can be found in \cite[Theorem~4.2]{BrDieFuchs12}, whence the case of general growth can be inferred by using a duality argument as in \cite{DIELENSTROVER12}, based on the following Lemma. }

\begin{lemma}
  \label{lem:varineqphi}
  Let $\phi$
  be an N-function with $\Delta_2(\phi,\phi^*) < \infty$. Let $B \subset \Rn$ be a ball, and let $\mathcal{A}$ be strongly
  elliptic in the sense of Legendre-Hadarmard. Then 
the following variational estimates
  hold for all $\bfv \in W^{1, \phi}_{0, \dive}(B)$
  \begin{align}
    \label{eq:varest}
    \int_B \phi(\abs{\E\bfv})\,\mathrm{d}x \lesssim\sup_{\bfxi \in
      (C^\infty_{0. \dive}(B))^N} \bigg[ \int_B \mathcal{A}(\E \bfv,
    \E\bfxi)\,\mathrm{d}x - \int_B \phi^*(\abs{D \bfxi})\,\mathrm{d}x
    \bigg]\,.
  \end{align}
The implicit constants in \eqref{eq:varest} 
only depend on $n$, $\kappaA$,
  $\norm{\mathcal{A}}$ and $\Delta_2(\phi,\phi^*)$.
\end{lemma}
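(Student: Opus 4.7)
The plan is to establish this variational inequality by a duality argument that, for a given $\bfv \in W^{1,\phi}_{0,\dive}(B)$, constructs a near-optimal divergence-free test field which realizes the supremum up to a universal constant. First I would set $\bfF := \phi'(|\E\bfv|)\frac{\E\bfv}{|\E\bfv|}$ (extended by zero where $\E\bfv$ vanishes), a symmetric tensor field with $|\bfF| = \phi'(|\E\bfv|)$. The pointwise equivalence $\phi^*(\phi'(t)) \sim \phi(t)$, which follows by combining \eqref{ineq:phiast_phi_p} with the Young equality $\phi(t) + \phi^*(\phi'(t)) = t\phi'(t)$ (using $g_1>1$ in \eqref{(2.1celok)}), then gives
\begin{equation*}
\int_B \phi^*(|\bfF|)\,\mathrm{d}x \ \sim\ \int_B \phi(|\E\bfv|)\,\mathrm{d}x,
\end{equation*}
which fixes the natural scaling of the optimal test function.

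Next I would invoke the existence and Calder\'on--Zygmund theory for the linear $\mathcal{A}$-Stokes problem in Orlicz--Sobolev spaces: under $\Delta_2(\phi,\phi^*)<\infty$ and the Legendre--Hadamard ellipticity \eqref{eq:LegHam} of $\mathcal{A}$, there exists $\bfzeta \in W^{1,\phi^*}_{0,\dive}(B,\R^n)$ satisfying
\begin{equation*}
\int_B \mathcal{A}(\E\bfeta, \E\bfzeta)\,\mathrm{d}x = \int_B \bfF : \E\bfeta\,\mathrm{d}x \qquad \text{for all } \bfeta \in W^{1,\phi}_{0,\dive}(B,\R^n),
\end{equation*}
together with the a priori estimate $\int_B \phi^*(|D\bfzeta|)\,\mathrm{d}x \leq c\int_B \phi^*(|\bfF|)\,\mathrm{d}x$; this is the Orlicz counterpart of the $p$-growth Stokes result of \cite{BrDieFuchs12}, obtainable by the duality method of \cite{DIELENSTROVER12}. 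Testing this identity with $\bfeta := \bfv$ (admissible since $\bfv$ is divergence-free with zero trace) and using $\phi'(t)t \sim \phi(t)$ then yields
\begin{equation*}
\int_B \mathcal{A}(\E\bfv, \E\bfzeta)\,\mathrm{d}x = \int_B \bfF : \E\bfv\,\mathrm{d}x \ \sim\ \int_B \phi(|\E\bfv|)\,\mathrm{d}x.
\end{equation*}

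The final step is a scaling-and-absorption trick. For $\lambda \in (0,1]$ I set $\bfxi := \lambda\bfzeta$; the convexity-driven bound $\phi^*(\lambda t) \leq \lambda^{g_2/(g_2-1)}\phi^*(t)$ from \eqref{eq:2.3ok} applied to the conjugate exponents of $\phi^*$ yields
\begin{equation*}
\int_B \mathcal{A}(\E\bfv, \E\bfxi)\,\mathrm{d}x - \int_B \phi^*(|D\bfxi|)\,\mathrm{d}x \ \geq\ \bigl( c_1 \lambda - c_2 \lambda^{g_2/(g_2-1)} \bigr) \int_B \phi(|\E\bfv|)\,\mathrm{d}x,
\end{equation*}
and since $g_2/(g_2-1) > 1$, a small but fixed $\lambda_*$ depending only on $\kappaA$, $\|\mathcal{A}\|$ and $\Delta_2(\phi,\phi^*)$ makes the prefactor positive. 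To close the argument I would approximate $\lambda_*\bfzeta$ in the $W^{1,\phi^*}_{0,\dive}$-topology by elements of $(C^\infty_{0,\dive}(B))^N$ via a mollification corrected by a Bogowski\v{\i}-type operator (Lemma~\ref{lem:bogowski} and Corollary~\ref{lem:Hbogowski}) to restore the divergence-free constraint --- legitimate because $\Delta_2(\phi^*)<\infty$ --- and then pass to the limit in the functional, using the continuity of both summands in $W^{1,\phi^*}$. The hard part will be the Orlicz Calder\'on--Zygmund estimate for the dual $\mathcal{A}$-Stokes problem used to produce $\bfzeta$; once this regularity input is in hand, the remaining pieces reduce to algebraic manipulations with $N$-functions, their conjugates, and the elementary scaling properties from Proposition~\ref{prop:properties}.
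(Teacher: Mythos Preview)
Your proposal is correct and follows essentially the same strategy the paper points to: the paper omits details and refers to \cite[Lemma~4.1]{BrDieFuchs12} combined with the duality argument of \cite[Lemma~20]{DIELENSTROVER12}, which is precisely the construction you describe --- solve the dual $\mathcal{A}$-Stokes problem with right-hand side $\bfF = \phi'(|\E\bfv|)\E\bfv/|\E\bfv|$, invoke the Orlicz Calder\'on--Zygmund estimate, test back with $\bfv$, then rescale and approximate. Your identification of the Orlicz regularity for the dual Stokes system as the nontrivial input is exactly right; one minor remark is that the scaling bound $\phi^*(\lambda t)\leq \lambda^{g_2/(g_2-1)}\phi^*(t)$ is more cleanly obtained by applying \eqref{eq:2.2ok} directly to $\phi^*$ (which satisfies \eqref{(2.1celok)} with lower index $g_2/(g_2-1)$) rather than via \eqref{eq:2.3ok}.
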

\begin{proof}
{This result can be deduced from \cite[Lemma~4.1]{BrDieFuchs12} along the lines of the proof of \cite[Lemma~20]{DIELENSTROVER12}. We then omit the details. }
\end{proof}

The $\mathcal{A}$-Stokes approximation can be stated as follows.
\begin{theorem}\label{AStokes}
  
  Let $B \subset \Rn$ be a ball with radius~$r_B$ and let
  $\widetilde{B} \subset \Rn$ denote either~$B$ or $2B$.  Let $\phi$
  be an N-function with $\Delta_2(\phi,\phi^*) < \infty$ and let
  $s>1$. Then for every $\kappa>0$, there exists $\delta>0$ only
  depending on $n$, $\kappa_A$, $\abs{\mathcal{A}}$,
  $\Delta_2(\phi,\phi^*)$ and $s$ such that the following holds.
  If $\bfv \in W^{1,\phi}_{\dive}(\widetilde{B})$ is {\em almost
    $\mathcal{A}$-Stokes } on~$B$ in the sense that
  \begin{align}
    \label{eq:Aappr_ah}
    \biggabs{\dashint_B \mathcal{A}(\E\bfv, \E\bfxi)\,\mathrm{d}x}
    \leq \delta \dashint_{\widetilde{B}} \abs{\E\bfv}\,\mathrm{d}x
    \norm{D \bfxi}_\infty
  \end{align}
  for all $\bfxi \in C^\infty_{0, \dive}(B)$. Then the unique solution $\bfw
  \in (W^{1, \phi_s}_{0,\dive}(B))^n$ of~\eqref{eq:calA2} satisfies

  \begin{align}
    \label{eq:Aappr_est}
    \dashint_B \phi\bigg(\frac{\abs{\bfw}}{r_B}\bigg)\,\mathrm{d}x +
    \dashint_B \phi(\abs{D\bfw})\,\mathrm{d}x \leq \kappa
    \bigg(\dashint_{\widetilde{B}} \big(\phi(\abs{D
      \bfv})\big)^s \,\mathrm{d}x\bigg)^{\frac 1s}.
  \end{align}
It holds $\kappa=\kappa(\phi, s, \delta)$ and $\lim_{\delta\to 0} \kappa(\phi,s,\delta)=0$. The function $\bfh=\bfv-\bfw$  is called the $\mathcal{A}$-Stokes approximation of $\bfv.$
\end{theorem}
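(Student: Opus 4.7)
First I would establish existence and uniqueness of the solution $\bfw \in (W^{1,\phi_s}_{0,\dive}(B))^n$ of \eqref{eq:calA2}: since $\mathcal A$ is Legendre--Hadamard elliptic and both $\phi$ and $\phi^*$ satisfy the $\Delta_2$-condition, standard monotone operator theory applies on the divergence-free Orlicz--Sobolev space, with the divergence constraint handled by the Bogowski\v{\i}-type operator of Corollary~\ref{lem:Hbogowski}. The quantitative estimate \eqref{eq:Aappr_est} is then extracted from the variational principle of Lemma~\ref{lem:varineqphi}, following the duality scheme of \cite[Lemma~20]{DIELENSTROVER12}, which upgrades the $p$-growth version \cite[Theorem~4.2]{BrDieFuchs12} to general N-function growth.

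The quantitative step proceeds as follows. Applying Lemma~\ref{lem:varineqphi} to $\bfw$ and using that $\bfw$ solves \eqref{eq:calA2}, for every divergence-free test field $\bfxi$ the pairing $\int_B \mathcal A(\E\bfw, \E\bfxi)\,\mathrm{d}x$ coincides with $\int_B \mathcal A(\E\bfv, \E\bfxi)\,\mathrm{d}x$, which by hypothesis \eqref{eq:Aappr_ah} satisfies
\begin{equation*}
\biggl|\int_B \mathcal A(\E\bfv, \E\bfxi)\,\mathrm{d}x\biggr| \leq \delta|B|\biggl(\dashint_{\widetilde B} |\E\bfv|\,\mathrm{d}x\biggr)\|D\bfxi\|_\infty\,.
\end{equation*}
Rescaling $\bfxi \mapsto \tau\bfxi$ in \eqref{eq:varest}, optimizing over $\tau>0$ through Young's inequality for the pair $(\phi,\phi^*)$, and passing from the $L^1$ average of $|\E\bfv|$ to the Orlicz average via a H\"older-type inequality for N-functions, yields an estimate of the form
\begin{equation*}
\int_B \phi(|\E\bfw|)\,\mathrm{d}x \leq \kappa(\delta)|B|\biggl(\dashint_{\widetilde B} \phi(|D\bfv|)^s\,\mathrm{d}x\biggr)^{1/s}
\end{equation*}
with $\kappa(\delta)\to 0$ as $\delta \to 0$; the higher integrability exponent $s$ enters precisely to compensate for this Orlicz interpolation. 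The Orlicz--Korn inequality (Lemma~\ref{lem:kornorlicz}) then converts $|\E\bfw|$ to $|D\bfw|$ on the left, and the Sobolev--Poincar\'e inequality of Theorem~\ref{thm:sob-poincare} yields the $\phi(|\bfw|/r_B)$ contribution.

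The principal technical obstacle is the mismatch between the $L^\infty$ norm on $D\bfxi$ demanded by \eqref{eq:Aappr_ah} and the Orlicz integral $\int_B \phi^*(|D\bfxi|)\,\mathrm{d}x$ supplied by Lemma~\ref{lem:varineqphi}. I would handle this through a Lipschitz truncation of $\bfxi$ at a height $\lambda$, producing a divergence-free replacement $\bfxi_\lambda$ with $\|D\bfxi_\lambda\|_\infty \lesssim \lambda$ and whose error on the exceptional set is controlled via the distribution function of $|D\bfxi|$. Balancing this truncation error against the $\delta$-defect and exploiting the $\Delta_2$-property of $\phi^*$, so that small $\lambda$ translates into small $\phi^*$-errors, produces the explicit $\kappa(\delta)$ with the required vanishing as $\delta\to 0$.
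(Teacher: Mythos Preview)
Your proposal is correct and follows essentially the same route as the paper, which does not give a detailed proof but states that the result is obtained from the $p$-growth case \cite[Theorem~4.2]{BrDieFuchs12} via the duality argument of \cite[Lemma~20]{DIELENSTROVER12} based on Lemma~\ref{lem:varineqphi}. Your sketch correctly identifies the solenoidal Lipschitz truncation as the device bridging the $L^\infty$ bound in \eqref{eq:Aappr_ah} with the Orlicz dual norm in \eqref{eq:varest}, and the subsequent use of Korn and Sobolev--Poincar\'e to recover the full left-hand side of \eqref{eq:Aappr_est}; this is precisely the content of the cited references.
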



{
\begin{remark}\label{rem:thmmodified}
We will exploit the previous approximation result in a slightly modified version. Indeed, 
under the additional assumption $(\mathcal{W}{\bf v})_{\tilde{B}}={\bf 0}$ and
\begin{equation}
\dashint_{\tilde{B}} G(|\E{\bf v}|)\,\mathrm{d}x \leq \left(\dashint_{\tilde{B}} [G(|\E{\bf v}|)]^s\,\mathrm{d}x\right)^{\frac{1}{s}}\leq G(\epsilon)
\label{eq:alternativebound}
\end{equation}
for some exponent $s>1$ and for a constant $\epsilon>0$, and \eqref{eq:Aappr_ah} replaced by
  \begin{equation*}
        \biggabs{\dashint_B \mathcal{A}(\E\bfv, \E\bfxi)\,\mathrm{d}x}
    \leq \delta \epsilon
    \norm{D \bm\xi}_{L^\infty(B)}\,,
  \end{equation*}
by using Korn's inequality (Lemma~\ref{lem:kornorlicz}) and following \cite[Lemma~2.7]{CeladaOk}, it can be seen 
that the unique solution ${\bf w}
  \in (W^{1, \phi_s}_{0,\dive}(B))^n$ of~\eqref{eq:calA2} satisfies
\begin{equation}
\dashint_B G\bigg(\frac{\abs{{\bf w}}}{r_B}\bigg)\,\mathrm{d}x +
    \dashint_B G(\abs{D{\bf w}})\,\mathrm{d}x \leq \kappa G(\epsilon)\,.
\label{eq:lemma2.7bis}
\end{equation}
\label{rem:remarkastokes}
\end{remark}
}

%

%

\subsection{Self-improving properties of weak solutions}\label{sec:selfimprove}
{We prove the self improving properties for double phase Stokes systems \eqref{eq:1.1ok} with lower order terms. {These are some higher integrability results which will play a crucial role in the sequel. They can be compared with analogous results obtained for Stokes systems with $p(\cdot)$-growth, see \cite[Theorem~4.2]{acerbimingione} and \cite[Lemma~2.10]{BDHS 12}. In this direction, our main difficulty is in handling the lower order terms without any ``closeness'' condition between the extreme exponents $p$ and $q$, which, on the contrary, can be ensured in the variable exponent setting by requiring some continuity assumption on $p(\cdot)$. For instance, in the main estimate \eqref{eq:3.6ok}, we are forced to control the average of the term $r^\frac{p}{2}|D\bfu|^p$ instead of the more natural $r^{p}|D\bfu|^p$ appearing in \cite[Lemma~2.10]{BDHS 12}. Nevertheless, these issues can be overcome when dealing with the terms involving $H(x,t)$, just playing with the different behavior of $H$ according to the magnitude of $\mu$ inside a small ball.} 
\begin{lemma}\label{lem:higint}
Let $H:\Omega \times [0,\infty)\to[0,\infty)$ be defined as in \eqref{eq:H}, and satisfying \eqref{eq:pq}. Let ${\bf a}:\Omega\times\R_{\rm sym}^{n\times n}\to \R_{\rm sym}^{n\times n}$ be such that
\begin{equation}
|{\bf a}(x,\bm\xi)|\leq L H'(x, 1+|\bm\xi|) \quad \mbox{ and }\quad {\bf a}(x,\bm\xi):\bm\xi\geq \nu H(x,|\bm\xi|) - \nu_0 H(x,1)
\label{eq:3.5ok}
\end{equation}
for all $x\in\Omega$ and $\bm\xi\in\R_{\rm sym}^{n\times n}$, and for some $0<\nu\leq L<\infty$, $\nu_0\in[0,1]$.
Let $\bfu\in W^{1,1}(\Omega;\R^n)$ with $H(\cdot,|\E\bfu|)\in L^1(\Omega)$ be a weak solution to \eqref{eq:1.1ok}. Then there exists $s_0>1$, depending on $n,p,q, [\mu]_{C^\alpha},\nu,L,\nu_0, \|\E\bfu\|_{L^p(\Omega)}$, such that $H(\cdot,|\E\bfu|)\in L^{s_0}_{\rm loc}(\Omega)$ and,  for every $s\in(1,s_0]$ and every $B_{2r}(x_0)\subset\subset\Omega$, $r<1$,
\begin{equation}
\left(\dashint_{B_{r}(x_0)}[H(x,1+|\E\bfu|)]^{s}\,\mathrm{d}x\right)^{\frac{1}{s}} \leq c \dashint_{B_{2r}(x_0)}H(x,1+|\E\bfu|)\,\mathrm{d}x + c\dashint_{B_{2r}(x_0)}(r^\frac{p}{2}|D\bfu|^p+|\bfu|^{p^*}+1) \, \mathrm{d}x
\label{eq:3.6ok}
\end{equation}
for some $c=c(n,p,q, [\mu]_{C^\alpha},\nu,L,\nu_0, \|\E\bfu\|_{L^p(\Omega)})>0$.

{Moreover, for each $t\in[0,1]$ we have
\begin{equation}
\left(\dashint_{B_{r}(x_0)}[H(x,1+|\E\bfu|)]^{s}\,\mathrm{d}x\right)^{\frac{1}{s}} \leq c_t \left(\dashint_{B_{2r}(x_0)}[H(x,1+|\E\bfu|)]^t\,\mathrm{d}x\right)^\frac{1}{t} + c\dashint_{B_{2r}(x_0)}(r^\frac{p}{2}|D\bfu|^p+|\bfu|^{p^*}+1) \, \mathrm{d}x
\label{eq:3.6okbis}
\end{equation}
for some constant $c_t = c_t(n,p,q, [\mu]_{C^\alpha},\nu,L,\nu_0, \|\E\bfu\|_{L^p(\Omega)},t)>0$.}
\end{lemma}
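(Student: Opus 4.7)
The plan is to derive a reverse H\"older inequality via a Caccioppoli-type estimate and then apply Gehring's lemma (Lemma~\ref{lem:gehring}) to obtain the higher integrability. First, I would fix $B_{2r}(x_0)\ssubset\Omega$, choose a cutoff $\eta\in C^\infty_c(B_{3r/2}(x_0))$ with $\eta\equiv 1$ on $B_r(x_0)$ and $|D\eta|\leq c/r$, set ${\bf v}:=\bfu-(\bfu)_{B_{2r}}$, and construct a divergence-free test function $\bm\varphi:=\eta^q{\bf v}-\bm\psi$, where $\bm\psi\in W^{1,H}_0(B_{3r/2}(x_0))$ is the Bogowski\v{\i} corrector (Corollary~\ref{lem:Hbogowski}) solving ${\rm div}\,\bm\psi = D(\eta^q)\cdot{\bf v}$, enjoying the Orlicz control $\int H(x,|D\bm\psi|)\leq c\int H(x,|{\bf v}|/r)$. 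The vanishing-mean condition needed for Bogowski\v{\i} follows from ${\rm div}\,\bfu=0$ by integration by parts.

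Plugging $\bm\varphi$ into the weak formulation \eqref{eq:system2} and using the coercivity/growth \eqref{eq:3.5ok}, Young's inequality \eqref{eq:2.5ok}, and the relation $H^*(x,H'(x,t))\sim H(x,t)$ (cf.\ \eqref{eq:2.6ok}), the terms generated by $D\eta$ and by $\bm\psi$ can be absorbed into the coercive $H$-energy. This yields a Caccioppoli-type bound of the form
\begin{equation*}
\dashint_{B_r} H(x,1+|\E\bfu|)\,\mathrm{d}x \leq c\dashint_{B_{2r}} H\!\left(x,\tfrac{|{\bf v}|}{r}\right)\mathrm{d}x + c\dashint_{B_{2r}}H(x,1)\,\mathrm{d}x + \mathrm{(I)} + \mathrm{(II)},
\end{equation*}
where (I) collects the convective contribution and (II) the force contribution. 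For (I), I would integrate by parts using ${\rm div}\,\bm\varphi=0$ to rewrite $\int \bfu[D\bfu]\cdot\bm\varphi = -\int \bfu\otimes\bfu:D\bm\varphi$, and then apply H\"older together with the Sobolev embedding $W^{1,p}\hookrightarrow L^{p^*}$; calibrating the exponents yields the bound $c\dashint_{B_{2r}}(|\bfu|^{p^*}+r^{p/2}|D\bfu|^p)\,\mathrm{d}x$. For (II), $\int f\cdot\bm\varphi$ is controlled by Young's inequality using $f\in L^{n(1+\beta)}$ and absorbed into the $H$-term and the $|\bfu|^{p^*}$-term.

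To convert the Caccioppoli bound into a reverse H\"older inequality one must handle the $x$-dependence of $H$. I would majorize $H(x,t)\leq H(x_0,t)+[\mu]_{C^\alpha}(2r)^\alpha t^q$, then apply the Orlicz Sobolev-Poincar\'e inequality (Theorem~\ref{thm:sob-poincare}) to the frozen $N$-function $G(t):=H(x_0,t)$ combined with Korn's inequality (Lemma~\ref{lem:kornorlicz}) to replace $|D\bfu|$ by $|\E\bfu|$; the remainder $r^\alpha(|{\bf v}|/r)^q$ is controlled through the double-phase constraint $q\leq p+\alpha p/n$, the Sobolev embedding, and the global bound $\|\E\bfu\|_{L^p(\Omega)}$, producing a harmless factor $r^\alpha(\dashint|D\bfu|^p)^{(q-p)/p}$. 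The outcome is a reverse H\"older inequality
\begin{equation*}
\dashint_{B_r}H(x,1+|\E\bfu|)\,\mathrm{d}x\leq c\left(\dashint_{B_{2r}}[H(x,1+|\E\bfu|)]^\theta\,\mathrm{d}x\right)^{1/\theta} + c\dashint_{B_{2r}}(r^{p/2}|D\bfu|^p+|\bfu|^{p^*}+1)\,\mathrm{d}x
\end{equation*}
for some $\theta\in(0,1)$. Lemma~\ref{lem:gehring} then delivers \eqref{eq:3.6ok}, and \eqref{eq:3.6okbis} follows by applying the same lemma with general exponent $\sigma_2=s$ on the left and exponent $t\in(0,1]$ on the right.

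The main obstacle I expect is the convective term: the integration by parts and H\"older exponents must be tuned so that the resulting scaling is precisely $r^{p/2}|D\bfu|^p$ (weaker than the naive $r^p|D\bfu|^p$) and matches $|\bfu|^{p^*}$ on the other factor, which requires care given the range $p\geq 2$, $n\geq 3$. A related subtlety is that the freezing of $x_0$ in the Sobolev-Poincar\'e step is justified only thanks to $q\leq p+\alpha p/n$: this is exactly what allows the $\mu$-oscillation remainder to be swept into constants depending on $\|\E\bfu\|_{L^p(\Omega)}$, and it explains the dependence of $s_0$ and $c$ on that norm in the statement.
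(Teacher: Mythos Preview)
Your overall scheme (Caccioppoli with a Bogowski\v{\i}-corrected divergence-free test function, then Sobolev--Poincar\'e plus Korn to get a reverse H\"older inequality, then Gehring) matches the paper. But two concrete steps would fail as written.

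First, you subtract only the mean ${\bf v}=\bfu-(\bfu)_{B_{2r}}$, whereas the paper subtracts the full rigid motion $\mathcal{P}_{B_{2r}}\bfu$ from \eqref{eq:acmingrigidmotion}. This is not cosmetic: your Sobolev--Poincar\'e step produces $G^\theta(|D{\bf v}|)=G^\theta(|D\bfu|)$ on the right, and you then invoke Lemma~\ref{lem:kornorlicz} to pass to $|\E\bfu|$. But that Korn inequality requires either zero trace or $(\W\bfu)_{B}=\mathbf{0}$, neither of which holds for $\bfu-(\bfu)_{B_{2r}}$. With $\bfu-\mathcal{P}_{B_{2r}}\bfu$ instead, the needed Korn-type bound \eqref{eq:2.21acming} (and its Orlicz analogue) is available, and this is exactly what the paper uses in proving the key claim \eqref{eq:4.6acming}. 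Relatedly, the paper handles the $x$-dependence of $H$ by a genuine two-case split (\,$\sup_{B_{2r}}\mu\le 4[\mu]_{C^\alpha}r^\alpha$ versus $\inf_{B_{2r}}\mu>3[\mu]_{C^\alpha}r^\alpha$\,) rather than your single freezing with oscillation remainder; the second case is where the frozen Orlicz Sobolev--Poincar\'e and Korn actually apply cleanly.

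Second, your treatment of the convective term is off in two ways. The paper does \emph{not} integrate by parts to $\bfu\otimes\bfu:D\bm\varphi$; it keeps the form $|\bfu|\,|D\bfu|\,|\bfu-\mathcal{P}|$ and exploits the extra Poincar\'e gain from the factor $|\bfu-\mathcal{P}|$, using the calibrated exponents $\gamma:=1+\tfrac{\beta}{2}\tfrac{n+2}{n}$ and $\sigma:=[\tfrac12(\tfrac{p}{\gamma})^*]'$. Your route yields $\int|\bfu|^2|D\bm\varphi|$, and the natural H\"older split with exponent $(p^*/2)'$ on $|D\bfu|$ exceeds $p$ whenever $p<\tfrac{3n}{n+2}$, which occurs already for $p=2$, $n\ge5$. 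More importantly, Gehring's lemma (Lemma~\ref{lem:gehring}) needs the inhomogeneity $g$ in $L^{\sigma_0}$ for some $\sigma_0>1$; the paper secures this by producing the \emph{sub}-powers $|\bfu|^{p^*/\gamma}$ and $|D\bfu|^{p/\gamma}$ (so that $g\in L^\gamma$), and only \emph{after} Gehring do these become $|\bfu|^{p^*}$ and $|D\bfu|^p$ in \eqref{eq:3.6ok}. Your proposal puts $|\bfu|^{p^*}+r^{p/2}|D\bfu|^p$ directly into the pre-Gehring reverse H\"older inequality, which leaves $g$ merely in $L^1$ and blocks the application of Lemma~\ref{lem:gehring}.
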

\begin{proof}
{Our argument partly follows that of \cite[Theorem~3.4]{OKJFA18}, devised for homogeneous double-phase systems depending on the gradient $D\bfu$, while the estimates for the right hand side's terms in \eqref{eq:system2} are essentially taken from Acerbi and Mingione \cite[Theorem~4.2]{acerbimingione}. }

Let $\eta\in C_0^\infty(B_{2r}(x_0))$ be a cut-off function such that $\eta\equiv1$ on $B_{r}(x_0)$, $0\leq\eta\leq1$ and $|D\eta|\leq \frac{c}{2r}$, and let $ {\bf \mathcal{P}}:={\bf \mathcal{P}}_{B_{2r}(x_0)}$ be the rigid displacement introduced in \eqref{eq:acmingrigidmotion}. We consider
\begin{equation}
\bm\psi:=\eta^q({\bf u} - {\bf \mathcal{P}}) + {\bf w}\,,
\end{equation}
where the function $\bf w$ is defined through Lemma~\ref{lem:bogowski} as a solution to
\begin{equation}
{\rm div}\, {\bf w} =  - {\rm div}\,(\eta^q({\bf u} - {\bf \mathcal{P}}))= - q\eta^{q-1}D\eta\cdot ({\bf u} - {\bf \mathcal{P}}) \,.
\label{eq:4.7acming}
\end{equation}
Such a $\bf w$ exists since ${\rm div}\,\bfu=0$ and 
\begin{equation*}
\int_{B_{2r}(x_0)}({\bf u} - {\bf \mathcal{P}}) D(\eta^q)\,\mathrm{d}x =0\,, 
\end{equation*}
and by \eqref{eq:pq} and the summability properties of $\bfu$ it holds that ${\bf w}\in W^{1,q}_0(B_{2r}(x_0);\R^n)$. We recall also that, by \eqref{eq:3.17bog}, we have the estimate
\begin{equation}
\dashint_{B_{2r}(x_0)} |D{\bf w}|^t\,\mathrm{d}x \leq c \dashint_{B_{2r}(x_0)}\left|\frac{\bfu - {\bf \mathcal{P}}}{2r}\right|^t\,\mathrm{d}x
\label{eq:4.8acming}
\end{equation}
for every exponent $t$ for which the right hand side is finite.
Taking $\bm\psi$ as a test function in \eqref{eq:1.1ok} we get
\begin{equation}
\begin{split}
J_1:= \dashint_{B_{2r}} \eta^q {\bf a}(x,\E\bfu):\E{\bf u}\,\mathrm{d}x & = -q\dashint_{B_{2r}}\eta^{q-1} {\bf a}(x,\E\bfu):(D \eta\odot({\bf u} - {\bf \mathcal{P}}))\,\mathrm{d}x \\
& \,\,\,\,\,\, - \dashint_{B_{2r}} {\bf a}(x,\E\bfu):\E{\bf w}\,\mathrm{d}x + \dashint_{B_{2r}} \eta^q \bfu[D \bfu]({\bf u} - {\bf \mathcal{P}})\,\mathrm{d}x \\
& \,\,\,\,\,\, + \dashint_{B_{2r}} \bfu[D \bfu]{\bf w}\,\mathrm{d}x + \dashint_{B_{2r}} \eta^q f({\bf u} - {\bf \mathcal{P}})\,\mathrm{d}x + \dashint_{B_{2r}} f{\bf w}\,\mathrm{d}x \\
& =: J_2+J_3+J_4+J_5+J_6+J_7\,.
\end{split}
\end{equation}
By the second condition in \eqref{eq:3.5ok} we have
\begin{equation}
J_1 \geq \nu \dashint_{B_{2r}} \eta^q H(x,|\E \bfu|)\,\mathrm{d}x - \nu_0 \dashint_{B_{2r}} \eta^qH(x,1)\,\mathrm{d}x \,,
\end{equation}
while from the first one we get
\begin{equation}
J_2 \leq c \dashint_{B_{2r}} \eta^q H'(x, 1 + |\E\bfu|) \frac{|\bfu - {\bf \mathcal{P}}|}{2r}\,\mathrm{d}x\,.
\end{equation} 
Now, by using Young's inequality, for any $\kappa\in(0,1)$ we obtain
\begin{equation*}
\begin{split}
\dashint_{B_{2r}}\eta^{q-1} |\E\bfu|^{p-1} \frac{|\bfu - {\bf \mathcal{P}}|}{2r} \,\mathrm{d}x & \leq \kappa \dashint_{B_{2r}}\eta^{q} |\E\bfu|^{p}\,\mathrm{d}x + c_\kappa \dashint_{B_{2r}}\left(\frac{|\bfu - {\bf \mathcal{P}}|}{2r}\right)^p\,\mathrm{d}x\,, \\
\dashint_{B_{2r}}\eta^{q-1} \mu(x)|\E\bfu|^{q-1} \frac{|\bfu - {\bf \mathcal{P}}|}{2r}\,\mathrm{d}x & \leq \kappa \dashint_{B_{2r}}\eta^{q} \mu(x)|\E\bfu|^{q}\,\mathrm{d}x + c_\kappa \dashint_{B_{2r}}\mu(x)\left(\frac{|\bfu - {\bf \mathcal{P}}|}{2r}\right)^q\,\mathrm{d}x\,,
\end{split}
\end{equation*}
whence
\begin{equation*}
\dashint_{B_{2r}} \eta^q H'(x, 1 + |\E\bfu|) \frac{|\bfu - {\bf \mathcal{P}}|}{2r}\,\mathrm{d}x \leq \kappa \dashint_{B_{2r}} \eta^q H(x, 1+ |\E \bfu|)\,\mathrm{d}x + c_\kappa\dashint_{B_{2r}}H\left(x, \frac{|\bfu - {\bf \mathcal{P}}|}{2r}\right) \,\mathrm{d}x \,.
\end{equation*}
As for $J_3$, using again the first condition in \eqref{eq:3.5ok} and Young's inequality, and Corollary~\ref{lem:Hbogowski}, we have
\begin{equation*}
J_3 \leq c \dashint_{B_{2r}} H'(x, 1 + |\E\bfu|) |\E {\bf w}|\,\mathrm{d}x \leq \kappa \dashint_{B_{2r}} H(x, 1 + |\E \bfu|)\,\mathrm{d}x + c_\kappa\dashint_{B_{2r}}H\left(x, \frac{|\bfu - {\bf \mathcal{P}}|}{2r}\right) \,\mathrm{d}x \,.
\end{equation*}
We can write
\begin{equation*}
\begin{split}
J_4+J_6 & \leq \dashint_{B_{2r}}|\bfu||D\bfu||{\bf u} - {\bf \mathcal{P}}|\,\mathrm{d}x + \dashint_{B_{2r}} |f||{\bf u} - {\bf \mathcal{P}}|\,\mathrm{d}x \\
J_5+J_7 & \leq \dashint_{B_{2r}}|\bfu||D\bfu||{\bf w}|\,\mathrm{d}x + \dashint_{B_{2r}} |f||{\bf w}|\,\mathrm{d}x \,.
\end{split}
\end{equation*}
Let $0<\beta<\frac{1}{n+2}$. We set
\begin{equation*}
\gamma:= 1+ \frac{\beta}{2}\left(\frac{n+2}{n}\right)\,, \quad \sigma:= \left[\frac{1}{2}\left(\frac{p}{\gamma}\right)^*\right]'\,,
\end{equation*}
and note that 
\begin{equation}
1\leq \frac{p}{\gamma}<n\,, \quad 1<\sigma<\sigma \gamma \leq p\,, \quad \left(\frac{p}{\gamma}\right)^* \leq \frac{p^*}{\gamma}\,, 
\label{eq:4.10acming}
\end{equation}
since
\begin{equation*}
2\beta + \frac{3n}{n+2} \leq p < n < n\gamma\,.
\end{equation*}
{By using H\"older's inequality, the Poincar\'e inequality for $\bfu-{\bf \mathcal{P}}$ and \eqref{eq:4.10acming} 
we have
\begin{equation*}
\begin{split}
&\dashint_{B_{2r}} |\bfu| |D\bfu||\eta^q({\bf u} - {\bf \mathcal{P}})|\,\mathrm{d}x \\
 & \,\,\,\,\,\,\leq 2r\left(\dashint_{B_{2r}} |\bfu|^{(\frac{p}{\gamma})^*}\,\mathrm{d}x\right)^{\frac{1}{(\frac{p}{\gamma})^*}}\left(\dashint_{B_{2r}} |D\bfu|^{\sigma}\,\mathrm{d}x\right)^\frac{1}{\sigma}\left(\dashint_{B_{2r}} \left|\frac{{\bf u} - {\bf \mathcal{P}}}{2r}\right|^{(\frac{p}{\gamma})^*}\,\mathrm{d}x\right)^{\frac{1}{(\frac{p}{\gamma})^*}} \\
& \,\,\,\,\,\,\leq cr \left(\dashint_{B_{2r}} |\bfu|^{(\frac{p}{\gamma})^*}\,\mathrm{d}x\right)^{\frac{1}{(\frac{p}{\gamma})^*}}\left(\dashint_{B_{2r}} |D\bfu|^{\sigma}\,\mathrm{d}x\right)^\frac{1}{\sigma} \left(\dashint_{B_{2r}} |D\bfu|^{\frac{p}{\gamma}}\,\mathrm{d}x\right)^\frac{\gamma}{p}  \\
& \,\,\,\,\,\,\leq cr \left(\dashint_{B_{2r}} |\bfu|^{\frac{p^*}{\gamma}}\,\mathrm{d}x\right)^{\frac{\gamma}{p^*}}\left(\dashint_{B_{2r}} |D\bfu|^{\frac{p}{\gamma}}\,\mathrm{d}x\right)^\frac{2\gamma}{p} 
\end{split}
\end{equation*}
whence, with Young's inequality, 
\begin{equation*}
\begin{split}
\dashint_{B_{2r}} |\bfu| |D\bfu||\eta^q({\bf u} - {\bf \mathcal{P}})|\,\mathrm{d}x & \leq c \dashint_{B_{2r}} |\bfu|^{\frac{p^*}{\gamma}}\,\mathrm{d}x + c r^{\frac{np}{np-n\gamma+p\gamma}}\left(\dashint_{B_{2r}} |D\bfu|^{\frac{p}{\gamma}}\,\mathrm{d}x\right)^\frac{2n\gamma}{np-n\gamma+p\gamma} \\
& \leq c\left(\dashint_{B_{2r}} |\bfu|^{\frac{p^*}{\gamma}}\,\mathrm{d}x + r^{\frac{p}{2\gamma}}\dashint_{B_{2r}} |D\bfu|^{\frac{p}{\gamma}}\,\mathrm{d}x + 1 \right)\,.
\end{split}
\end{equation*} }
Again using H\"older's and Young's inequalities, \eqref{eq:2.21acming} and the fact that \eqref{eq:4.10acming} implies $1^*\leq (\frac{p}{\gamma})^*$, we obtain
\begin{equation*}
\begin{split}
\dashint_{B_{2r}} |f||{\bf u} - {\bf \mathcal{P}}|\,\mathrm{d}x & \leq c \left(\dashint_{B_{2r}} |f|^n\,\mathrm{d}x\right)^\frac{1}{n} \left(\dashint_{B_{2r}} |{\bf u} - {\bf \mathcal{P}}|^{\frac{n}{n-1}}\,\mathrm{d}x\right)^\frac{n-1}{n} \\
& \leq c \|f\|_{L^{n}(\Omega)} \left(\dashint_{B_{2r}} \left|\frac{{\bf u} - {\bf \mathcal{P}}}{2r}\right|^{p^*}\,\mathrm{d}x\right)^\frac{1}{p^*}\\
& \leq c \|f\|_{L^{n}(\Omega)} \left(\dashint_{B_{2r}} |\E\bfu|^{p}\,\mathrm{d}x\right)^\frac{1}{p} \\
& \leq \kappa \dashint_{B_{2r}} |\E\bfu|^{p}\,\mathrm{d}x + c_\kappa \\
& \leq \kappa \dashint_{B_{2r}} H(x,|\E\bfu|)\,\mathrm{d}x + c_\kappa
\end{split}
\end{equation*}
for every $\kappa\in(0,1)$. Using H\"older's inequality, recalling that ${\bf w}\in W^{1,q}_0(B_{2r};\R^n)$, using Poincar\'e inequality, \eqref{eq:4.8acming} and then Young's inequality we obtain the estimate
{\begin{equation*}
\begin{split}
&\dashint_{B_{2r}} |\bfu| |D\bfu||{\bf w}|\,\mathrm{d}x \\
 & \,\,\,\,\,\,\leq \left(\dashint_{B_{2r}} |\bfu|^{(\frac{p}{\gamma})^*}\,\mathrm{d}x\right)^{\frac{1}{(\frac{p}{\gamma})^*}}\left(\dashint_{B_{2r}} |D\bfu|^{\sigma}\,\mathrm{d}x\right)^\frac{1}{\sigma}\left(\dashint_{B_{2r}} |\bfw|^{(\frac{p}{\gamma})^*}\,\mathrm{d}x\right)^{\frac{1}{(\frac{p}{\gamma})^*}} \\
 & \,\,\,\,\,\,\leq cr\left(\dashint_{B_{2r}} |\bfu|^{(\frac{p}{\gamma})^*}\,\mathrm{d}x\right)^{\frac{1}{(\frac{p}{\gamma})^*}}\left(\dashint_{B_{2r}} |D\bfu|^{\sigma}\,\mathrm{d}x\right)^\frac{1}{\sigma}\left(\dashint_{B_{2r}} |D{\bf w}|^{\frac{p}{\gamma}}\,\mathrm{d}x\right)^{\frac{\gamma}{p}} \\
 & \,\,\,\,\,\,\leq cr\left(\dashint_{B_{2r}} |\bfu|^{(\frac{p}{\gamma})^*}\,\mathrm{d}x\right)^{\frac{1}{(\frac{p}{\gamma})^*}}\left(\dashint_{B_{2r}} |D\bfu|^{\sigma}\,\mathrm{d}x\right)^\frac{1}{\sigma}\left(\dashint_{B_{2r}} \left|\frac{{\bf u} - {\bf \mathcal{P}}}{2r}\right|^{(\frac{p}{\gamma})^*}\,\mathrm{d}x\right)^{\frac{1}{(\frac{p}{\gamma})^*}} \\
&  \,\,\,\,\,\,\leq cr \left(\dashint_{B_{2r}} |\bfu|^{\frac{p^*}{\gamma}}\,\mathrm{d}x\right)^{\frac{\gamma}{p^*}}\left(\dashint_{B_{2r}} |D\bfu|^{\frac{p}{\gamma}}\,\mathrm{d}x\right)^\frac{2\gamma}{p} \\ 
& \leq c\left(\dashint_{B_{2r}} |\bfu|^{\frac{p^*}{\gamma}}\,\mathrm{d}x + r^{\frac{p}{2\gamma}}\dashint_{B_{2r}} |D\bfu|^{\frac{p}{\gamma}}\,\mathrm{d}x + 1 \right)\,.
\end{split}
\end{equation*}}
Arguing as above and with Korn's inequality we get
\begin{equation*}
\begin{split}
\dashint_{B_{2r}} |f||{\bf w}|\,\mathrm{d}x & \leq c \left(\dashint_{B_{2r}} |f|^n\,\mathrm{d}x\right)^\frac{1}{n} \left(\dashint_{B_{2r}} |{\bf w}|^{p^*}\,\mathrm{d}x\right)^\frac{1}{p^*} \\
& \leq c r \|f\|_{L^{n}(\Omega)} \left(\dashint_{B_{2r}}|D{\bf w}|^{p}\,\mathrm{d}x\right)^\frac{1}{p} \\
& \leq c \|f\|_{L^{n}(\Omega)} \left(\dashint_{B_{2r}}\left|\frac{\bfu - {\bf \mathcal{P}}}{2r}\right|^p\,\mathrm{d}x\right)^\frac{1}{p}  \\
& \leq c \|f\|_{L^{n}(\Omega)} \left(\dashint_{B_{2r}}|\E \bfu|^p\,\mathrm{d}x\right)^\frac{1}{p} \\
& \leq \kappa \dashint_{B_{2r}} |\E\bfu|^{p}\,\mathrm{d}x + c_\kappa \\
& \leq \kappa \dashint_{B_{2r}} H(x,|\E\bfu|)\,\mathrm{d}x + c_\kappa \,,
\end{split}
\end{equation*}
where in the last inequality we applied Young's inequality for some $\kappa\in(0,1)$.
Collecting the previous estimates and taking into account \eqref{eq:4.10acming}, we finally have
\begin{equation}
\begin{split}
\dashint_{B_{r}} H(x,1+|\E \bfu|)\,\mathrm{d}x & \leq \kappa  \dashint_{B_{2r}} H(x,1+|\E \bfu|)\,\mathrm{d}x + c_\kappa\dashint_{B_{2r}}H\left(x, \frac{|\bfu - {\bf \mathcal{P}}|}{2r}\right) \,\mathrm{d}x \\
& \,\,\,\,\,\, + c_\kappa \dashint_{B_{2r}(x_0)}(r^{\frac{p}{2\gamma}}|D\bfu|^\frac{p}{\gamma}+|\bfu|^\frac{p^*}{\gamma}+1) \, \mathrm{d}x\,,
\end{split}
\label{eq:4.11acming}
\end{equation}
for every $\kappa\in(0,1)$. Note that setting
\begin{equation*}
g:= r^{\frac{p}{2\gamma}} |D\bfu|^\frac{p}{\gamma}+|\bfu|^\frac{p^*}{\gamma}+1
\end{equation*}
we have $g\in L^\gamma(B_{2r})$, since by Korn's inequality and the Sobolev embedding theorem it holds that $|D\bfu|^{p}+|\bfu|^{p^*}\in L^1(\Omega)$.
Now we claim that there exists $\theta=\theta(n,p,q)\in(0,1)$ such that
\begin{equation}
\dashint_{B_{2r}}H\left(x, \frac{|\bfu - {\bf \mathcal{P}}|}{2r}\right) \,\mathrm{d}x \leq c  \left(\dashint_{B_{2r}} [H(x,|\E \bfu|)]^{\theta}\,\mathrm{d}x \right)^\frac{1}{\theta}\,,
\label{eq:4.6acming}
\end{equation}
where $c=c(q,p,\mu, \|\E \bfu\|_{L^p(\Omega)})>0$.
{We first note that, setting $q_*:=\frac{nq}{n+q}$, we have $q_*<p$ by \eqref{eq:pq}. Then, by Korn's inequality 
it holds that
\begin{equation}
\dashint_{B_{2r}}\left|\frac{\bfu - {\bf \mathcal{P}}}{2r}\right|^q\,\mathrm{d}x \leq c \left(\dashint_{B_{2r}} |\E \bfu|^{q_*}\,\mathrm{d}x \right)^{\frac{q}{q_*}}\,.
\label{eq:4.6acmingbis}
\end{equation}
We distinguish between two cases. If $\sup_{B_{2r}}\mu(\cdot) \leq 4 [\mu]_{C^\alpha}r^\alpha$, using \eqref{eq:4.6acmingbis} and H\"older's inequality we then find
\begin{equation*}
\begin{split}
\dashint_{B_{2r}}\mu(x)\left|\frac{\bfu - {\bf \mathcal{P}}}{2r}\right|^q\,\mathrm{d}x & \leq c 4 [\mu]_{C^\alpha}r^\alpha\left(\dashint_{B_{2r}} |\E \bfu|^{q_*}\,\mathrm{d}x \right)^{\frac{q}{q_*}} \\
   & \leq c 4 [\mu]_{C^\alpha}r^\alpha \left(\dashint_{B_{2r}} |\E \bfu|^{p}\,\mathrm{d}x \right)^{\frac{q-p}{p}} \left(\dashint_{B_{2r}} |\E \bfu|^{q_*}\,\mathrm{d}x \right)^\frac{p}{q_*}  \\
& \leq  c [\mu]_{C^\alpha}\|\E \bfu\|_{L^p(\Omega)}^{q-p}r^{\alpha - \frac{n(q-p)}{p}} \left(\dashint_{B_{2r}} |\E \bfu|^{q_*}\,\mathrm{d}x \right)^\frac{p}{q_*}\,,
\end{split}
\end{equation*}
whence \eqref{eq:4.6acming} follows by choosing $\theta:=\frac{q_*}{p}<1$ and noting that, by Korn's inequality,
\begin{equation*}
\begin{split}
\dashint_{B_{2r}}\left|\frac{\bfu - {\bf \mathcal{P}}}{2r}\right|^p\,\mathrm{d}x & \leq c \left(\dashint_{B_{2r}} |\E \bfu|^{p_*}\,\mathrm{d}x \right)^{\frac{p}{p_*}}\\
& \leq c \left(\dashint_{B_{2r}} |\E \bfu|^{p\theta}\,\mathrm{d}x \right)^{\frac{1}{\theta}}  \,,
\end{split}
\end{equation*}
where in the latter we used $p_*\leq q_*$.} If instead $\inf_{B_{2r}}\mu(\cdot)>3[\mu]_{C^\alpha}r^\alpha$, then we can freeze $\mu$ at $x=x_0$ and apply Theorem~\ref{thm:sob-poincare} and Lemma~\ref{lem:kornorlicz} with $G(t):=H(x_0,t)$, since in this case $\frac{3}{4} H(x,t)\leq H(x_0,t)\leq \frac{4}{3} H(x,t)$ for every $x\in B_{2r}(x_0)$. This concludes the proof of the claim.

Now, combining \eqref{eq:4.11acming} and \eqref{eq:4.6acming} we can write
\begin{equation*}
\begin{split}
\dashint_{B_{r}} H(x,1+|\E \bfu|)\,\mathrm{d}x & \leq \kappa  \dashint_{B_{2r}} H(x,1+|\E \bfu|)\,\mathrm{d}x + c_\kappa \left(\dashint_{B_{2r}} [H(x,1+|\E \bfu|)]^{\theta}\,\mathrm{d}x \right)^\frac{1}{\theta} + c_\kappa \dashint_{B_{2r}(x_0)}g\, \mathrm{d}x\,.
\end{split}
\end{equation*}
This estimate holds for every $\kappa\in(0,1)$ and every ball $B_{2r}\subset\subset\Omega$, and the constants $c_\kappa$ only depend on the data. Thus, by virtue of a variant of Gehring's lemma (Lemma~\ref{lem:gehring}) we get \eqref{eq:3.6ok}. This concludes the proof.
\end{proof}

We define
\begin{equation}
\varepsilon_0:= \varepsilon_0(n, p,q, [\mu]_{C^\alpha},\nu,L,\|\E\bfu\|_{L^p(\Omega)}) := \min\left\{\frac{\alpha}{2}, \frac{n(q-p)(s_0-1)}{2ps_0}\right\}\,,
\label{eq:3.7ok}
\end{equation}
where $s_0$ is that of Lemma~\ref{lem:higint}.
We now state an higher integrability estimate near the $p$-phase.
\begin{lemma}
\label{lem:lemma3.6ok}
For $B_{2r}(x_0)\subset\subset\Omega$ with $r\leq1$, if 
\begin{equation}
\inf_{B_{2r}}\mu(\cdot) \leq [\mu]_{C^\alpha}(2r)^{\alpha-\varepsilon_0}\,,
\label{eq:3.8ok}
\end{equation}
then
\begin{equation}
\left(\dashint_{B_{r}(x_0)}[H(x,1+|\E\bfu|)]^{s_0}\,\mathrm{d}x\right)^{\frac{1}{s_0}} \leq c (1+2^\alpha r^{\varepsilon_0})\dashint_{B_{2r}(x_0)}(1+|\E\bfu|)^p\,\mathrm{d}x + c\dashint_{B_{2r}(x_0)}(r^\frac{p}{2}|D\bfu|^p+|\bfu|^{p^*}+1) \, \mathrm{d}x
\label{eq:3.9ok}
\end{equation}
for some $c=c(n,p,q, [\mu]_{C^\alpha},\nu,L,\nu_0,\|\E\bfu\|_{L^p(\Omega)}, \|1+|\E\bfu|\|_{L^{ps_0}(B_{2r}(x_0))})$.
\noindent
{If, in addition,
\begin{equation}
r^\frac{p}{2} (|D\bfu|^p)_{x_0,2r} + |(\bfu)_{x_0,2r}| \leq M\,,
\label{eq:smallnesshighint}
\end{equation}
then
\begin{equation}
\left(\dashint_{B_{r}(x_0)}[H(x,1+|\E\bfu|)]^{s_0}\,\mathrm{d}x\right)^{\frac{1}{s_0}} \leq c (1+2^\alpha r^{\varepsilon_0})\dashint_{B_{2r}(x_0)}(1+|\E\bfu|)^p\,\mathrm{d}x \,.
\label{eq:3.9okbis}
\end{equation}}
\end{lemma}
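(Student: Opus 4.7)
The strategy is to apply the second form of Lemma~\ref{lem:higint}, namely \eqref{eq:3.6okbis}, with the sub-unit exponent $t:=p/q\in(0,1)$, and then use the assumed smallness of $\mu$ throughout $B_{2r}(x_0)$ to replace $H^{p/q}$ by a pure $(1+|\E\bfu|)^p$-term, up to an absorbable $r^{\varepsilon_0}$-factor. The precise definition of $\varepsilon_0$ in \eqref{eq:3.7ok} is tuned to make exactly this absorption work.

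To begin, the $\alpha$-Hölder continuity of $\mu$ together with hypothesis \eqref{eq:3.8ok}, the condition $r\le 1$, and $\varepsilon_0\le\alpha/2$ yield $\sup_{B_{2r}(x_0)}\mu\le c[\mu]_{C^\alpha}(2r)^{\alpha-\varepsilon_0}$, since $(4r)^\alpha\le c\,(2r)^{\alpha-\varepsilon_0}$ under these constraints. Using the subadditivity $(a+b)^{p/q}\le a^{p/q}+b^{p/q}$ valid for $p/q\in(0,1)$, one then estimates pointwise $H(x,1+|\E\bfu|)^{p/q}\le (1+|\E\bfu|)^{p^2/q}+\mu(x)^{p/q}(1+|\E\bfu|)^p$. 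Averaging over $B_{2r}$, raising to the power $q/p>1$, using $(a+b)^{q/p}\le c(a^{q/p}+b^{q/p})$, and applying Jensen's inequality for the concave map $t\mapsto t^{p/q}$ to the first summand (legitimate since $p^2/q\le p$), one arrives at
$$
\Bigl(\dashint_{B_{2r}}H^{p/q}\Bigr)^{q/p}\le c\dashint_{B_{2r}}(1+|\E\bfu|)^p + c(2r)^{\alpha-\varepsilon_0}\Bigl(\dashint_{B_{2r}}(1+|\E\bfu|)^p\Bigr)^{q/p}.
$$

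The key quantitative step is the treatment of the second summand. Using Jensen's inequality for the convex map $t\mapsto t^{s_0}$ and the higher integrability provided by Lemma~\ref{lem:higint} (which yields $1+|\E\bfu|\in L^{ps_0}_{\rm loc}$), one gets $\dashint(1+|\E\bfu|)^p\le c\,r^{-n/s_0}\|1+|\E\bfu|\|_{L^{ps_0}(B_{2r}(x_0))}^{p}$. Extracting one factor of $\dashint(1+|\E\bfu|)^p$ then produces
$$
(2r)^{\alpha-\varepsilon_0}\Bigl(\dashint(1+|\E\bfu|)^p\Bigr)^{q/p}\le c\,r^{\alpha-\varepsilon_0-n(q-p)/(ps_0)}\,\|1+|\E\bfu|\|_{L^{ps_0}}^{q-p}\,\dashint(1+|\E\bfu|)^p.
$$
Here lies the main technical obstacle: the definition of $\varepsilon_0$ combined with the sharp gap condition $n(q-p)\le\alpha p$ from \eqref{eq:pq} must conspire to force the exponent of $r$ to be at least $\varepsilon_0$. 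Indeed, $\varepsilon_0\le n(q-p)(s_0-1)/(2ps_0)$ implies $2\varepsilon_0\le n(q-p)(s_0-1)/(ps_0)$, and then $\alpha\ge n(q-p)/p$ gives
$$
\alpha-2\varepsilon_0 \;\ge\; \alpha - \frac{n(q-p)(s_0-1)}{ps_0} \;\ge\; \frac{n(q-p)}{p}-\frac{n(q-p)(s_0-1)}{ps_0}\;=\;\frac{n(q-p)}{ps_0}.
$$
This exponent balancing is precisely why $\varepsilon_0$ takes the form \eqref{eq:3.7ok}. Absorbing $\|1+|\E\bfu|\|_{L^{ps_0}}^{q-p}$ into the constant $c$ and plugging the resulting bound back into \eqref{eq:3.6okbis} delivers \eqref{eq:3.9ok}.

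For the refined estimate \eqref{eq:3.9okbis}, the smallness assumption \eqref{eq:smallnesshighint} provides $r^{p/2}\dashint|D\bfu|^p\le M$ directly, and $\dashint|D\bfu|^p\le Mr^{-p/2}$. Splitting $\bfu=(\bfu)_{x_0,2r}+(\bfu-(\bfu)_{x_0,2r})$, using $|(\bfu)_{x_0,2r}|\le M$, and applying Sobolev-Poincar\'e (Theorem~\ref{thm:sob-poincare}) to the zero-mean part yield $\dashint|\bfu|^{p^*}\le c(M^{p^*}+M^{p^*/p}r^{p^*/2})\le c(M)$ for $r\le 1$. Hence the last integral in \eqref{eq:3.9ok} reduces to a constant depending only on $M$ and the data; absorbing it into the first term via $\dashint(1+|\E\bfu|)^p\ge 1$ produces \eqref{eq:3.9okbis}.
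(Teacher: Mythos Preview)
Your proof is correct and follows essentially the same route as the paper: apply the sub-unit exponent estimate \eqref{eq:3.6okbis}, exploit the pointwise bound $\mu\le c\,r^{\alpha-\varepsilon_0}$ on $B_{2r}$, and balance the resulting power of $r$ against the $\|1+|\E\bfu|\|_{L^{ps_0}}$ factor using the precise definition of $\varepsilon_0$. The only visible differences are cosmetic: the paper (following \cite{OKNLA18}) applies \eqref{eq:3.6okbis} with $t=t_0/2$, $t_0:=\min\{2p/q,\,p/(q-p)\}$, whereas you take $t=p/q$; both choices lead to the same exponent inequality $\alpha-2\varepsilon_0\ge n(q-p)/(ps_0)$. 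For \eqref{eq:3.9okbis} the paper pushes the $|\bfu|^{p^*}$ term all the way down to $\dashint(1+|\E\bfu|)^p$ via Sobolev--Korn, while you bound it by a constant $c(M)$ and absorb it through $\dashint(1+|\E\bfu|)^p\ge1$; either works, and in both cases the final constant depends on $M$.
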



\begin{proof}
The argument is similar to that of \cite[Lemma~3.6]{OKNLA18}. Thus, we only sketch the proof. 

We first note that under assumption \eqref{eq:3.8ok}, for every $x\in B_{2r}(x_0)$ we have $\mu(x)\leq 3[\mu]_{C^\alpha}r^{\alpha-\varepsilon_0}$. This fact together with \eqref{eq:3.6okbis}, applied with $t=\frac{t_0}{2}\in(0,1)$ where
\begin{equation}
t_0:= \min\left\{\frac{2p}{q}, \frac{p}{q-p}\right\}\,,
\label{eq:3.11oknla}
\end{equation}
implies
\begin{equation}
\begin{split}
\dashint_{B_r(x_0)} \mu(x)(1+|\E\bfu|)^q \,\mathrm{d}x & \leq c \left(\dashint_{B_{2r}(x_0)}(1+|\E\bfu|)^\frac{pt_0}{2}\right)^\frac{2}{t_0} + c r^{\alpha-\varepsilon_0} \left(\dashint_{B_{2r}(x_0)}(1+|\E\bfu|)^\frac{qt_0}{2}\,\mathrm{d}x\right)^\frac{2}{t_0} \\
& \,\,\,\, + c\dashint_{B_{2r}(x_0)}(r^\frac{p}{2}|D\bfu|^p+|\bfu|^{p^*}+1) \, \mathrm{d}x \,.
\end{split}
\label{eq:3.12oknla}
\end{equation}
In \eqref{eq:3.12oknla}, the second integral is finite since $(qt_0)/2\leq p$ by \eqref{eq:3.11oknla}. Moreover, with H\"older's inequality and the fact that $(q-p)t_0\leq p$ again by \eqref{eq:3.11oknla}, we have
\begin{equation}
 \left(\dashint_{B_{2r}(x_0)}(1+|\E\bfu|)^\frac{qt_0}{2}\right)^\frac{2}{t_0} \leq c r^{-\frac{(q-p)n}{ps_0}} \|1+|\E\bfu|\|_{L^{ps_0}(B_{2r}(x_0))}^{q-p} \dashint_{B_{2r}(x_0)}(1+|\E\bfu|)^p\,\mathrm{d}x \,.
\label{eq:3.13oknla}
\end{equation}
Therefore, plugging \eqref{eq:3.12oknla} into \eqref{eq:3.13oknla} and using $r\leq1$, \eqref{eq:pq} 
and the definition of $\varepsilon_0$, we get
\begin{equation}
\begin{split}
\dashint_{B_r(x_0)} \mu(x)(1+|\E\bfu|)^q \,\mathrm{d}x & \leq c \left(1+ 2^\alpha r^{\alpha-\varepsilon_0-\frac{(q-p)n}{p}-\frac{(q-p)n(s_0-1)}{ps_0}}\right)\dashint_{B_{2r}(x_0)}(1+|\E\bfu|)^p \\
& \,\,\,\, + c\dashint_{B_{2r}(x_0)}(r^\frac{p}{2}|D\bfu|^p+|\bfu|^{p^*}+1) \, \mathrm{d}x \\
& \leq c \left(1+ 2^\alpha r^{\varepsilon_0}\right)\dashint_{B_{2r}(x_0)}(1+|\E\bfu|)^p  + c\dashint_{B_{2r}(x_0)}(r^\frac{p}{2}|D\bfu|^p+|\bfu|^{p^*}+1) \, \mathrm{d}x \,.
\end{split}
\label{eq:3.14oknla}
\end{equation}
This concludes the proof of \eqref{eq:3.9ok}. In order to prove \eqref{eq:3.9okbis}, we only have to check that under assumption \eqref{eq:smallnesshighint}, we have
\begin{equation}
\dashint_{B_{2r}(x_0)}|\bfu|^{p^*} \, \mathrm{d}x \leq c \dashint_{B_{2r}(x_0)}(1+|\E\bfu|)^p \,\mathrm{d}x \,.
\label{eq:stimanormu}
\end{equation}
Indeed, using the Poincar\'e inequality and the Sobolev-Korn inequality \eqref{eq:2.9bogel}, we get
\begin{equation*}
\begin{split}
\dashint_{B_{2r}(x_0)}|\bfu|^{p^*} \, \mathrm{d}x & \leq c\dashint_{B_{2r}(x_0)}|\bfu-(\bfu)_{x_0,2r}|^{p^*} \, \mathrm{d}x + c |(\bfu)_{x_0,2r}|^{p^*} \\
& \leq c \left(r^{p}\dashint_{B_{2r}(x_0)}|D\bfu|^{p} \, \mathrm{d}x\right)^\frac{p^*}{p} + M^{p^*} \\
& \leq c \left( r^{p}\dashint_{B_{2r}(x_0)}|\E\bfu|^{p}\,\mathrm{d}x + \left (\dashint_{B_{2r}(x_0)}|{\bfu - (\bfu)_{x_0,2r}}| \, \mathrm{d}x\right)^p\right)^\frac{p^*}{p} + M^{p^*} \\
& \leq c\left( r^{p} \dashint_{B_{2r}(x_0)}|\E\bfu|^{p}\,\mathrm{d}x + 1 \right)^\frac{p^*}{p}+ M^{p^*}\\
& \leq c r^{p^*-n(\frac{p^*}{p}-1)} \|1+|\E\bfu|\|_{L^p(\Omega)}^{{p^*}-p} \dashint_{B_{2r}(x_0)}(1+|\E\bfu|^{p})\,\mathrm{d}x +M^{p^*}\\
& \leq c(n,p,\|1+|\E\bfu|\|_{L^p(\Omega)},M) \dashint_{B_{2r}(x_0)}(1+|\E\bfu|^{p})\,\mathrm{d}x\,.
\end{split}
\end{equation*}
The proof is concluded.
\end{proof}

\section{Decay estimates for excess functionals} \label{sec:decayestimate}

{
Let $x_0\in\Omega$, $B_\varrho(x_0)\subset\Omega$ and $\bm\ell_{x_0,\varrho}$ be the traceless affine function defined in \eqref{eq:affinefunctionl}. We introduce the following \emph{Campanato-type} excess functionals, measuring the oscillations of $\E\bfu$:
\begin{equation}
\begin{split}
\Phi(x_0,\varrho): & =\dashint_{B_\varrho(x_0)} H_{1+|(\E\bfu)_{x_0,\varrho}|} (x_0, |\E\bfu-\E\bm\ell_{x_0,\varrho}|)\,\mathrm{d}x \\
 & =\dashint_{B_\varrho(x_0)} H_{1+|(\E\bfu)_{x_0,\varrho}|} (x_0, |\E\bfu-(\E\bfu)_{x_0,\varrho}|)\,\mathrm{d}x  \,,
\end{split}
\label{eq:excess1}
\end{equation}
where $H_{1+|(\E\bfu)_{x_0,\varrho}|} (x_0,t)$ denotes the shifted $N$ function $H_a$ with shift $a:=1+|(\E\bfu)_{x_0,\varrho}|$. }

\subsection{Caccioppoli-type estimates} \label{sec:caccioppolitype}

{The first key tool is} the following ``conditioned''  Caccioppoli type estimate for $\bfu-\bm\ell_{x_0,r}$, {which holds under suitable smallness assumptions, see \eqref{eq:5.6acming} and \eqref{eq:smallness} below. }

\begin{lemma}\label{lem:lemma4.1}
Let $B_{\varrho}(x_0)\subset\subset\Omega$, with $\varrho\leq1$. Assume that
\begin{equation}
\varrho^\frac{p}{2} (|D\bfu|^p)_{x_0,\varrho} + |(\bfu)_{x_0,\varrho}| \leq M\,,
\label{eq:5.6acming}
\end{equation}
and
\begin{equation}
\Phi(x_0,\varrho)
\leq c H(x_0,1+|(\E\bfu)_{x_0,\varrho}|)\,.
\label{eq:smallness}
\end{equation}
Then we have
\begin{equation}
\begin{split}
& \dashint_{B_{\varrho/2}(x_0)} H_{1+|(\E\bfu)_{x_0,\varrho}|}(x_0,|\E\bfu-(\E\bfu)_{x_0,\varrho}|)\,\mathrm{d}x \\
& \,\,\,\,\,\,\,\,\,\,\,\,\,\,\,\,\,\, \leq c \dashint_{B_{\varrho}(x_0)} H_{1+|(\E\bfu)_{x_0,\varrho}|}\left(x_0,\frac{|{\bf u}-\bm\ell_{x_0,\varrho}|}{\varrho}\right)\,\mathrm{d}x + c \varrho^{\bar{\gamma}}\,H(x_0,1+|(\E\bfu)_{x_0,\varrho}|)\,
\end{split}
\label{eq:caccioppoliII}
\end{equation} 
for some constant $c>0$ depending on data, and $\bar{\gamma}:=\min\{\varepsilon_0, 1-\frac{1}{\beta+1}\}$. 
\end{lemma}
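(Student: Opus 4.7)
The plan is to derive a standard Caccioppoli-type inequality by testing the divergence-free weak formulation \eqref{eq:system2} against a corrected difference $\bfu-\bm\ell_{x_0,\varrho}$. Fix a cutoff $\eta\in C_0^\infty(B_\varrho(x_0))$ with $\eta\equiv 1$ on $B_{\varrho/2}(x_0)$ and $|D\eta|\leq c/\varrho$. Since $\bm\ell_{x_0,\varrho}\in\mathcal{T}(\R^n)$, the function $\eta^q(\bfu-\bm\ell_{x_0,\varrho})$ has divergence $q\eta^{q-1}D\eta\cdot(\bfu-\bm\ell_{x_0,\varrho})$, whose mean vanishes on $B_\varrho(x_0)$ by construction of $\bm\ell_{x_0,\varrho}$. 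Applying Lemma~\ref{lem:bogowski}/Corollary~\ref{lem:Hbogowski} yields $\bfw\in W^{1,H(x_0,\cdot)}_0(B_\varrho(x_0),\R^n)$ with $\dive\bfw=\dive(\eta^q(\bfu-\bm\ell_{x_0,\varrho}))$, and the test function $\bm\varphi:=\eta^q(\bfu-\bm\ell_{x_0,\varrho})-\bfw$ is admissible in \eqref{eq:system2}. I would also subtract off the constant $\bfa(x_0,(\E\bfu)_{x_0,\varrho})$, whose contribution vanishes in \eqref{eq:system2} since $\bm\varphi$ is divergence-free.

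Next I would exploit the monotonicity in \eqref{eq:1.9ok} and the shift equivalence \eqref{(2.6b)} to recognize the good term
\[
\int \eta^q\langle\bfa(x,\E\bfu)-\bfa(x,(\E\bfu)_{x_0,\varrho}),\E\bfu-(\E\bfu)_{x_0,\varrho}\rangle\,\mathrm{d}x \gtrsim \int \eta^q H_{1+|(\E\bfu)_{x_0,\varrho}|}(x,|\E\bfu-(\E\bfu)_{x_0,\varrho}|)\,\mathrm{d}x.
\]
Then I would swap $H(x,\cdot)\rightsquigarrow H(x_0,\cdot)$ using \eqref{eq:1.11ok} and the Hölder continuity of $\mu$ together with the shift lemma (Lemma~\ref{lem:changeshift}), picking up an error of order $\varrho^\alpha H(x_0,1+|(\E\bfu)_{x_0,\varrho}|)$ via the upper bound in \eqref{eq:1.8ok}. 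This yields the full shifted excess at $x_0$ on the left of the inequality.

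The remaining terms are estimated separately. The two stress-tensor terms containing $q\eta^{q-1}D\eta\odot(\bfu-\bm\ell_{x_0,\varrho})$ and $\E\bfw$ are bounded via Young's inequality in the conjugate pair $((H(x_0,\cdot))_{1+|(\E\bfu)_{x_0,\varrho}|},((H(x_0,\cdot))_{1+|(\E\bfu)_{x_0,\varrho}|})^*)$, using \eqref{eq:6.23dieett} to control $(H_a)^*\circ H_a'(a+|\E\bfu-(\E\bfu)_{x_0,\varrho}|)$; Corollary~\ref{lem:Hbogowski} plus Lemma~\ref{lem:kornorlicz} transfer the $\bfw$-norm back to $H_{1+|(\E\bfu)_{x_0,\varrho}|}(x_0,|\bfu-\bm\ell_{x_0,\varrho}|/\varrho)$. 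The convective and forcing terms $\int[\bfu[D\bfu]+f]\cdot\bm\varphi\,\mathrm{d}x$ are the delicate part: using Poincaré on $\bfu-\bm\ell_{x_0,\varrho}$, Sobolev--Korn \eqref{eq:2.9bogel}, and Hölder with exponents tied to the assumption $f\in L^{n(1+\beta)}$, I would produce the scale-invariant terms of Lemma~\ref{lem:lemma3.6ok} together with a factor $\varrho^{1-\frac{1}{1+\beta}}$ from $f$. Applying the higher integrability \eqref{eq:3.9okbis} (whose hypotheses are exactly the smallness \eqref{eq:5.6acming}) and the non-degeneracy \eqref{eq:smallness} then bounds these contributions by $\varrho^{\bar\gamma}H(x_0,1+|(\E\bfu)_{x_0,\varrho}|)$ with $\bar\gamma=\min\{\varepsilon_0,1-\tfrac{1}{\beta+1}\}$.

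Finally, absorbing the small multiple of the shifted excess into the left-hand side and discarding the $\eta^q$ weight on $B_{\varrho/2}(x_0)$ produces \eqref{eq:caccioppoliII}. The main obstacle I expect is the last step: extracting a genuine power $\varrho^{\bar\gamma}$ from the lower-order terms. Without a \emph{closeness} condition between $p$ and $q$ (as available in the $p(\cdot)$-growth setting of \cite{BDHS 12}), one cannot directly match the $H$-scale of the leading term; the remedy is the reverse-Hölder estimate \eqref{eq:3.9okbis}, which uses the non-degeneracy \eqref{eq:smallness} to couple the $q$-phase contribution to the $p$-phase average, and the smallness hypothesis \eqref{eq:5.6acming} to control $|\bfu|^{p^*}$ and $\varrho^{p/2}|D\bfu|^p$. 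Making the book-keeping of exponents match $\bar\gamma$ across the convective term $\bfu[D\bfu]$, the forcing term, and the $\mu$-oscillation is where all the technical work concentrates.
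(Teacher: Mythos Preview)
Your overall architecture (cutoff, Bogovski\v{\i} correction, subtraction of $\bfa(x_0,(\E\bfu)_{x_0,\varrho})$, monotonicity, Young with the shifted pair) is exactly right and matches the paper. The gap is in how you treat the $\mu$-oscillation. You claim that swapping $H(x,\cdot)\rightsquigarrow H(x_0,\cdot)$ via \eqref{eq:1.11ok} and the shift lemma costs only $\varrho^\alpha H(x_0,1+|(\E\bfu)_{x_0,\varrho}|)$. But \eqref{eq:1.11ok} produces $\varrho^\alpha(1+|\bm\xi|)^{q-1}$, and when $\mu(x_0)$ is small (the $p$-phase) one has $H'(x_0,t)\sim t^{p-1}$, so $(1+|(\E\bfu)_{x_0,\varrho}|)^{q-1}$ is \emph{not} controlled by $H'(x_0,1+|(\E\bfu)_{x_0,\varrho}|)$; there is a surplus factor $(1+|(\E\bfu)_{x_0,\varrho}|)^{q-p}$ that must be traded against a power of $\varrho$. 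The paper resolves this by a two-phase alternative: if $\inf_{B_\varrho}\mu\leq[\mu]_{C^\alpha}\varrho^{\alpha-\varepsilon_0}$, one uses the higher integrability from Lemma~\ref{lem:lemma3.6ok} together with the gap bound \eqref{eq:pq} and the definition of $\varepsilon_0$ to obtain $\varrho^{\alpha-\varepsilon_0}(1+|(\E\bfu)_{x_0,\varrho}|)^{q-p}\leq c\varrho^{\varepsilon_0}$; otherwise $\mu(x_0)\gtrsim\varrho^{\alpha-\varepsilon_0}$, so $\varrho^\alpha\lesssim\varrho^{\varepsilon_0}\mu(x_0)$ and the $q$-power is absorbed into $H'(x_0,\cdot)$ directly. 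This is precisely where $\varepsilon_0$ enters $\bar\gamma$.

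Correspondingly, you have inverted the roles of the error terms. The convective and forcing contributions do \emph{not} need the reverse-H\"older estimate \eqref{eq:3.9okbis}; they are handled straight from the smallness hypotheses \eqref{eq:5.6acming} and \eqref{eq:smallness} via H\"older, Poincar\'e, Sobolev--Korn \eqref{eq:2.9bogel}, and a Jensen argument (with the convex auxiliary $t\mapsto t^{p/\gamma}+\mu(x_0)t^{q/\gamma}$) to conclude $|J_4|\leq c\,\varrho\,H(x_0,1+|(\E\bfu)_{x_0,\varrho}|)$ and $|J_5|\leq c\,\varrho^{1-\frac{1}{\beta+1}}H(x_0,1+|(\E\bfu)_{x_0,\varrho}|)$. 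Note also that \eqref{eq:3.9okbis} itself is conditional on the $p$-phase hypothesis \eqref{eq:3.8ok}, so it cannot be invoked unconditionally as you propose. Finally, it is cleaner (and this is what the paper does) to place $x_0$ already in the monotonicity term $\bfa(x_0,\E\bfu)-\bfa(x_0,(\E\bfu)_{x_0,\varrho})$ and isolate the $x$-dependence at the \emph{fixed} second argument, i.e.\ $\bfa(x,(\E\bfu)_{x_0,\varrho})-\bfa(x_0,(\E\bfu)_{x_0,\varrho})$; your ordering (monotonicity at $x$, then swap inside the shifted function) makes the oscillation error depend on $|\E\bfu(x)|$ rather than on the constant $|(\E\bfu)_{x_0,\varrho}|$, which is strictly harder to close.
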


\begin{proof}
We follow the argument of \cite[Lemma~4.1]{OKJFA18}. 
We 
use the shorthands $\bm \ell_{\varrho}$ and $(\E\bfu)_{\varrho}$  for $\bm \ell_{x_0,\varrho}$ and $(\E\bfu)_{x_0,\varrho}$, respectively. 
We consider a cut-off function $\eta\in C_0^\infty(B_{\varrho};[0,1])$ such that $\eta\equiv1$ on $B_{\varrho/2}$ and $|D\eta|\leq \frac{c(n)}{\varrho}$. Correspondingly, we define the function $\bm\psi:=\eta^q({\bf u} - \bm \ell_{\varrho})\in W^{1,p}(B_{\varrho};\R^n)$. Since
\begin{equation*}
{\rm div}\, \bm\psi =  D(\eta^q)\cdot ({\bf u} - \bm \ell_{\varrho}) + \eta^q {\rm div}\,({\bf u} - \bm \ell_{\varrho}) =  q\eta^{q-1}D\eta\cdot ({\bf u} - \bm \ell_{\varrho})\,,
\end{equation*}
where we used that ${\rm div}\,\bfu=0$ and $\bm \ell_{\varrho}\in\mathcal{T}(\R^n)$, we conclude that $\bm\psi$ is not a divergence-free vector field. By virtue of Lemma~\ref{lem:bogowski} we can find ${\bf w}\in W^{1,q}_0(B_{\varrho};\R^n)$ such that, setting
\begin{equation*}
\bm\varphi:=\bm\psi-{\bf w} \,,
\end{equation*}
we have ${\rm div}\,\bm\varphi=0$. 
Taking $\bm\varphi$ as a test function in \eqref{eq:1.1ok} we get
\begin{equation}
\begin{split}
\dashint_{B_{\varrho}} \eta^q {\bf a}(x,\E\bfu):\E({\bf u} - \bm \ell_{\varrho})\,\mathrm{d}x & = -q\dashint_{B_{\varrho}}\eta^{q-1} {\bf a}(x,\E\bfu):(D \eta\odot({\bf u} - \bm \ell_{\varrho})-\E{\bf w})\,\mathrm{d}x \\
& \,\,\,\,\,\,\,\, +\dashint_{B_{\varrho}} \bfu[D \bfu][\eta^q({\bf u} - \bm \ell_{\varrho})-{\bf w}]\,\mathrm{d}x \\
& \,\,\,\,\,\,\,\, +  \dashint_{B_{\varrho}}  f[\eta^q({\bf u} - \bm \ell_{\varrho})-{\bf w}]\,\mathrm{d}x\,.
\end{split}
\end{equation}
This and the identity
\begin{equation}
\dashint_{B_{\varrho}}{\bf a}(x_0,(\E\bfu)_{\varrho}):\E\bm\varphi\,\mathrm{d}x=0
\label{eq:4.4ok}
\end{equation}
imply that
\begin{equation}
\begin{split}
J_1:&=\dashint_{B_{\varrho}} \eta^q({\bf a}(x_0,\E\bfu)-{\bf a}(x_0,(\E\bfu)_{\varrho})):(\E{\bf u} -(\E\bfu)_{\varrho})\,\mathrm{d}x \\
 &  =\dashint_{B_{\varrho}}({\bf a}(x_0,\E\bfu)-{\bf a}(x,\E\bfu)):\E{\bm \varphi}\,\mathrm{d}x \\
& \,\,\,\,\,\,  - q\dashint_{B_{\varrho}}\eta^{q-1}({\bf a}(x_0,\E\bfu)-{\bf a}(x_0, (\E\bfu)_{\varrho})):(D\eta\odot({\bf u} - \bm \ell_{\varrho})-\E{\bf w})\,\mathrm{d}x \\
& \,\,\,\,\,\, +\dashint_{B_{\varrho}}\bfu[D \bfu][\eta^q({\bf u} - \bm \ell_{\varrho})-{\bf w}]\,\mathrm{d}x +  \dashint_{B_{\varrho}}  f[\eta^q({\bf u} - \bm \ell_{\varrho})-{\bf w}]\,\mathrm{d}x \\
& = - \dashint_{B_{\varrho}}({\bf a}(x_0, (\E\bfu)_{\varrho})-{\bf a}(x,(\E\bfu)_{\varrho})):\E{\bm \varphi}\,\mathrm{d}x \\
& \,\,\,\,\,\,  - q \dashint_{B_{\varrho}}\eta^{q-1}({\bf a}(x,\E\bfu)-{\bf a}(x,(\E\bfu)_{\varrho})):(({\bf u} - \bm \ell_{\varrho})\odot D\eta-\E{\bf w})\,\mathrm{d}x \\
& \,\,\,\,\,\, +\dashint_{B_{\varrho}} \bfu[D \bfu][\eta^q({\bf u} - \bm \ell_{\varrho})-{\bf w}]\,\mathrm{d}x+ \dashint_{B_{\varrho}}  f[\eta^q({\bf u} - \bm \ell_{\varrho})-{\bf w}]\,\mathrm{d}x \\
& =: J_2+J_3+J_4+ J_5\,.
\end{split}
\label{eq:4.2ok}
\end{equation}
Now, we proceed to estimate each term above separately. With \eqref{eq:1.9ok} and \eqref{(2.6b)} we get
\begin{equation}
\begin{split}
J_1 & \geq c \dashint_{B_{\varrho}} \eta^q H''(x_0,1+|\E\bfu|+|(\E\bfu)_{\varrho}|)|\E{\bf u} - (\E\bfu)_{\varrho}|^2\,\mathrm{d}x \\
     & \geq c \dashint_{B_{\varrho}} \eta^q H_{1+|(\E\bfu)_{\varrho}|}(x_0,|\E{\bf u} - (\E\bfu)_{\varrho}|)\,\mathrm{d}x\,.
\end{split}
\label{eq:stimaJ1}
\end{equation}

To estimate $J_3$ we use the second inequality in \eqref{eq:1.8ok}, \eqref{eq:3.17bog} with $G(t)=H_{1+|(\E\bfu)_{\varrho}|}(x_0,t)$
, Young's inequality for $G, G^*$, \eqref{eq:2.6ok} and we get

\begin{equation}
\begin{split}
|J_3| & \leq c \dashint_{B_{\varrho}}\left(\int_0^1|D_\xi {\bf a}(x_0,\sigma(\E\bfu-(\E\bfu)_{\varrho}) + (\E\bfu)_{\varrho})|\,\mathrm{d}\sigma\right)|\E\bfu-(\E\bfu)_{\varrho}|\left(\frac{|{\bf u} - \bm \ell_{\varrho}|}{\varrho}+|\E{\bf w}|\right)\,\mathrm{d}x \\
& \leq c \dashint_{B_{\varrho}}\frac{H(x_0,1+|(\E\bfu)_{\varrho}|+|\E\bfu-(\E\bfu)_{\varrho}|)}{(1+|(\E\bfu)_{\varrho}|+|\E\bfu-(\E\bfu)_{\varrho}|)^2}|\E\bfu-(\E\bfu)_{\varrho}|\left(\frac{|{\bf u} - \bm \ell_{\varrho}|}{\varrho}+|\E{\bf w}|\right)\,\mathrm{d}x \\
& \leq c \dashint_{B_{\varrho}}{H'_{1+|(\E\bfu)_{\varrho}|}(x_0,|\E\bfu-(\E\bfu)_{\varrho}|)}\left(\frac{|{\bf u} - \bm \ell_{\varrho}|}{\varrho}+|\E{\bf w}|\right)\,\mathrm{d}x \\
& \leq c \dashint_{B_{\varrho}}\left(H^*_{1+|(\E\bfu)_{\varrho}|}\left(x_0,{H'_{1+|(\E\bfu)_{\varrho}|}(x_0,|\E\bfu-(\E\bfu)_{\varrho}|)}\right)+ H_{1+|(\E\bfu)_{\varrho}|}\left(x_0,\frac{|{\bf u} - \bm \ell_{\varrho}|}{\varrho}+|\E{\bf w}|\right)\right)\,\mathrm{d}x \\
& \leq c \dashint_{B_{\varrho}}\left(H_{1+|(\E\bfu)_{\varrho}|}(x_0,|\E\bfu-(\E\bfu)_{\varrho}|)+H_{1+|(\E\bfu)_{\varrho}|}\left(x_0,\frac{|{\bf u} - \bm \ell_{\varrho}|}{\varrho}\right)+H_{1+|(\E\bfu)_{\varrho}|}(x_0,|\E{\bf w}|)\right)\,\mathrm{d}x \\
& \leq c \dashint_{B_{\varrho}}H_{1+|(\E\bfu)_{\varrho}|}(x_0,|\E\bfu-(\E\bfu)_{\varrho}|)\,\mathrm{d}x+ c \dashint_{B_{\varrho}} H_{1+|(\E\bfu)_{\varrho}|}\left(x_0,\frac{|{\bf u} - \bm \ell_{\varrho}|}{\varrho}\right)\,\mathrm{d}x\,.
\end{split}
\label{eq:stimaJ3}
\end{equation}

We now treat separately the two cases where the inequality
\begin{equation}
\inf_{x\in B_{\varrho}} \mu(x) \leq [\mu]_{C^\alpha}\varrho^{\alpha-\varepsilon_0}
\label{eq:4.5ok}
\end{equation}
holds true or not. Note that from \eqref{eq:4.5ok}, for every $x\in B_{\varrho}$ we get
\begin{equation}
\mu(x) \leq  [\mu]_{C^\alpha} \varrho^\alpha + [\mu]_{C^\alpha}\varrho^{\alpha-\varepsilon_0} \leq 2[\mu]_{C^\alpha} \varrho^{\alpha-\varepsilon_0}\,.
\label{eq:4.5okbis}
\end{equation}
\\
{\bf Step 1: $p$-phase.} Let $\mu$ comply with \eqref{eq:4.5ok}. Then, using H\"older's inequality, \eqref{eq:pq}, the definition of $\epsilon_0$ and Lemma~\ref{lem:lemma3.6ok} we get
\begin{equation}
\begin{split}
\varrho^{\alpha-\epsilon_0}(1+|(\E\bfu)_{\varrho}|)^{q-p} & \leq \varrho^{\alpha-\epsilon_0} \left(\dashint_{B_{\varrho}}[1+|\E\bfu|]^{ps_0}\,\mathrm{d}x\right)^{\frac{q-p}{ps_0}} \\
& \leq \varrho^{\alpha-\frac{(q-p)n}{p}+\varepsilon_0} \|1+|\E\bfu|\|_{L^{ps_0}(B_{\varrho})}^{q-p} \leq c \varrho^{\varepsilon_0}\,.
\end{split}
\label{eq:4.6ok}
\end{equation}
From this and \eqref{eq:4.5okbis} we deduce that
\begin{equation}
\begin{split}
H(x_0,1+|(\E\bfu)_{\varrho}|) & = (1+\mu(x_0)(1+|(\E\bfu)_{\varrho}|)^{q-p})(1+|(\E\bfu)_{\varrho}|)^p \\
& \leq c(1+\varrho^{\alpha-\varepsilon_0}(1+|(\E\bfu)_{\varrho}|)^{q-p})(1+|(\E\bfu)_{\varrho}|)^p \\
& \leq c(1+\varrho^{\varepsilon_0})(1+|(\E\bfu)_{\varrho}|)^p \leq c(1+|(\E\bfu)_{\varrho}|)^p\,.
\end{split}
\label{eq:4.7ok}
\end{equation}

As for $J_2$, from \eqref{eq:1.11ok}, \eqref{eq:4.6ok}, \eqref{eq:4.7ok}, and using Young's inequality \eqref{eq:2.5ok} for $H_{1+|(\E\bfu)_{\varrho}|}(x_0,t)$ and its conjugate, \eqref{eq:6.23dieett},  
and \eqref{eq:3.17bogtris},
we obtain
\begin{equation}
\begin{split}
|J_2| & \leq 
 \dashint_{B_{\varrho}} c\varrho^\alpha (1+|(\E\bfu)_{\varrho}|)^{q-1}|\E{\bm \varphi}|\,\mathrm{d}x \\
& \leq c \dashint_{B_{\varrho}}\varrho^{\alpha-\epsilon_0}(1+|(\E\bfu)_{\varrho}|)^{q-p}(1+|(\E\bfu)_{\varrho}|)^{p-1}|\E{\bm \varphi}|\,\mathrm{d}x \\
& \leq c  \dashint_{B_{\varrho}}\varrho^{\varepsilon_0} H'(x_0, 1+|(\E\bfu)_{\varrho}|)|\E{\bm \varphi}|\,\mathrm{d}x \\ 
& \leq \frac{1}{4}\dashint_{B_{\varrho}}\eta^q H_{1+|(\E\bfu)_{\varrho}|}(x_0,|\E\bfu-(\E\bfu)_{\varrho}|)\,\mathrm{d}x + c \dashint_{B_{\varrho}}H_{1+|(\E\bfu)_{\varrho}|}\left(x_0,\frac{|\bfu-{\bm \ell}_{\varrho}|}{\varrho}\right)\,\mathrm{d}x \\
&\,\,\,\,\,\,  + c \varrho^{\varepsilon_0} H(x_0, 1+|(\E\bfu)_{\varrho}|)\,.
\end{split}
\label{eq:stimaJ2}
\end{equation}

Plugging the estimates \eqref{eq:stimaJ1}, \eqref{eq:stimaJ2} and \eqref{eq:stimaJ3} 
into \eqref{eq:4.2ok} and reabsorbing some terms we obtain \eqref{eq:caccioppoliII}. \\
\\
{\bf Step 2: $(p,q)$-phase.} If \eqref{eq:4.5ok} does not hold, then $\mu(x_0)\geq\inf_{x\in B_{\varrho}} \mu(x) > [\mu]_{C^\alpha}\varrho^{\alpha-\varepsilon_0}$, so that
\begin{equation}
\varrho^\alpha \leq c \varrho^{\varepsilon_0}\mu(x_0)\,.
\label{eq:stimamu1}
\end{equation} 

Now, arguing as for Step~1 and using Young's inequality \eqref{eq:2.5ok} for $H_{1+|(\E\bfu)_{\varrho}|}(x_0,t)$ and its conjugate,  and \eqref{eq:6.23dieett}, we have
\begin{equation}
\begin{split}
|J_2| & 
 \leq c \dashint_{B_{\varrho}}\varrho^{\varepsilon_0}H'(x_0,1+|(\E\bfu)_{\varrho}|)|\E{\bm \varphi}|\,\mathrm{d}x \\
& \leq \frac{1}{4}\dashint_{B_{\varrho}}\eta^q H_{1+|(\E\bfu)_{\varrho}|}(x_0, |\E\bfu-(\E\bfu)_{\varrho}|)\,\mathrm{d}x + c \dashint_{B_{\varrho}}H_{1+|(\E\bfu)_{\varrho}|}\left(x_0, \frac{|\bfu-{\bm \ell}_{\varrho}|}{\varrho}\right)\,\mathrm{d}x \\
&\,\,\,\,\,\,  + c \varrho^{\varepsilon_0}H(x_0, 1+|(\E\bfu)_{\varrho}|)\,. 
\end{split}
\end{equation}

Now, we set $\gamma:=\left(\frac{p^*}{2}\right)'$.
Note that if $p>\frac{3n}{n+2}$, then $\gamma<p$. Using 
H\"older's inequality we have
\begin{equation*}
\begin{split}
|J_4| & \leq \dashint_{B_{\varrho}} |\bfu| |D\bfu||\eta^q({\bf u} - \bm \ell_{\varrho})-{\bf w}|\,\mathrm{d}x \\
 & \leq \left(\dashint_{B_{\varrho}} |\bfu|^{p^*}\,\mathrm{d}x\right)^\frac{1}{p^*}\left(\dashint_{B_{\varrho}} |D\bfu|^{\gamma}\,\mathrm{d}x\right)^\frac{1}{\gamma}\left(\dashint_{B_{\varrho}} (|\bfu - \bm\ell_\varrho|^{p^*}+|{\bf w}|^{p^*})\,\mathrm{d}x\right)^\frac{1}{p^*} \\
&  =: I_1\cdot I_2\cdot I_3 \,.
\end{split}
\end{equation*}
Estimate of $I_1$: using Poincar\'e inequality and \eqref{eq:5.6acming}, we get

\begin{equation}
|I_1| \leq \left(\dashint_{B_{\varrho}} |\bfu-(\bfu)_\varrho|^{p^*}\,\mathrm{d}x\right)^\frac{1}{p^*} + |(\bfu)_\varrho| \leq \varrho(|D\bfu|^p)_\varrho + M \leq 2M\,.
\end{equation}
Estimate of $I_2$: with the Sobolev-Korn's inequality \eqref{eq:2.9bogel} and \eqref{eq:5.6acming} we obtain

\begin{equation}
\begin{split}
|I_2| & \leq c \left[\left(\dashint_{B_{\varrho}} |\E\bfu|^{\gamma}\,\mathrm{d}x\right)^\frac{1}{\gamma} + \dashint_{B_{\varrho}} \left|\frac{\bfu-(\bfu)_\varrho}{\varrho}\right|\,\mathrm{d}x\right] \\
& \leq c \left[\left(\dashint_{B_{\varrho}} |\E\bfu|^{\gamma}\,\mathrm{d}x\right)^\frac{1}{\gamma} + C_M \right]\,.
\end{split}
\end{equation}
Now, using Lemma~\ref{lem:changeshift} and \eqref{eq:smallness},
\begin{equation}
\begin{split}
&\dashint_{B_{\varrho}} H(x_0, 1+|\E\bfu|)\,\mathrm{d}x \\
& \leq cc_\eta \dashint_{B_{\varrho}} H_{1+|(\E\bfu)_\varrho|}(x_0, |\E\bfu-(\E\bfu)_\varrho|)\,\mathrm{d}x + (\eta+c) H(x_0, 1+|(\E\bfu)_\varrho|) \\
& \leq c H(x_0, 1+|(\E\bfu)_\varrho|)\,,
\end{split}
\end{equation}
whence, from Jensen's inequality for the function $\Psi(t):=t^\frac{p}{\gamma}+\mu(x_0)t^\frac{q}{\gamma}$ (convex, since $\gamma<p\leq q$) we obtain
\begin{equation}
\begin{split}
\dashint_{B_{\varrho}} (1+|\E\bfu|)^{\gamma}\,\mathrm{d}x & \leq \Psi^{-1}\left(\dashint_{B_{\varrho}} H(x_0, 1+|\E\bfu|)\,\mathrm{d}x\right) \\
& \leq c \Psi^{-1}\left(H(x_0, 1+|(\E\bfu)_\varrho|)\right) \\
& \leq c(1+|(\E\bfu)_\varrho|)^\gamma\,.
\end{split}
\label{eq:otherestimate}
\end{equation}
Thus,
\begin{equation}
\begin{split}
|I_2| \leq c \left[1+|(\E\bfu)_\varrho|+ C_M \right]\,.
\end{split}
\end{equation}
Estimate of $I_3$: we use Poincar\'e's inequality, Lemma~\ref{lem:bogowski} and Lemma~\ref{lem:kornorlicz}, and we get

\begin{equation}
\begin{split}
|I_3| \leq \left(\dashint_{B_{\varrho}} (|\bfu - \bm\ell_\varrho|^{p^*}+\varrho^{p^*}|D {\bf w}|^{p})\,\mathrm{d}x\right)^\frac{1}{p^*} & \leq c \left(\dashint_{B_{\varrho}} |\bfu - \bm\ell_\varrho|^{p^*}\,\mathrm{d}x\right)^\frac{1}{p^*} \\
& \leq c \varrho\left(\dashint_{B_{\varrho}} |D\bfu - (D\bfu)_\varrho|^{p}\,\mathrm{d}x\right)^\frac{1}{p}\\
& \leq c c_{\rm Korn} \varrho\left(\dashint_{B_{\varrho}} |\E\bfu - (\E\bfu)_\varrho|^{p}\,\mathrm{d}x\right)^\frac{1}{p}\,.
\end{split}
\end{equation}
Now, using the change of shift formula, \eqref{eq:smallness} and an analogous argument as for \eqref{eq:otherestimate} with the convex function $\widetilde{\Psi}(t):= t +\mu(x_0)t^\frac{q}{p}$, we obtain
\begin{equation}
\left(\dashint_{B_{\varrho}} |\E\bfu - (\E\bfu)_\varrho|^{p}\,\mathrm{d}x\right)^\frac{1}{p} \leq c (1+|(\E\bfu)_\varrho|)\,.
\end{equation}
Collecting the previous estimates, we finally get
\begin{equation*}
\begin{split}
\left|\dashint_{B_{\varrho}} \bfu[D\bfu][\eta^q({\bf u} - \bm \ell_{\varrho})-{\bf w}]\,\mathrm{d}x\right|  & \leq c \varrho (1+|(\E\bfu)_\varrho|)^2  \\
& \leq c  \varrho H(x_0, 1+|(\E\bfu)_\varrho|)\,.
\end{split}
\end{equation*}

We are left to estimate $J_5$. Using H\"older's inequality, $p^*>\frac{n}{n-1}$ and the estimate of $I_3$ we have
\begin{equation*}
\begin{split}
|J_5| & \leq c \left(\dashint_{B_\varrho} |f|^n\,\mathrm{d}x\right)^\frac{1}{n} \left(\dashint_{B_{\varrho}} (|\bfu - \bm\ell_\varrho|^{\frac{n}{n-1}}+|{\bf w}|^{\frac{n}{n-1}})\,\mathrm{d}x\right)^\frac{n-1}{n} \\
& \leq c \varrho^{-\frac{1}{\beta+1}}\|f\|_{L^{n(1+\beta)}(\Omega)} \left(\dashint_{B_{\varrho}} (|\bfu - \bm\ell_\varrho|^{p^*}+|{\bf w}|^{p^*})\,\mathrm{d}x\right)^\frac{1}{p^*} \\
& \leq c \varrho^{1-\frac{1}{\beta+1}} (1+|(\E\bfu)_\varrho|) \\
& \leq c \varrho^{1-\frac{1}{\beta+1}}\,H(x_0, 1+|(\E\bfu)_\varrho|)\,.
\end{split}
\end{equation*}
The proof of \eqref{eq:caccioppoliII} is then concluded. 
\end{proof}

The following result can be obtained combining the argument of \cite[Theorem~3.2]{OKJFA18} with Lemma~\ref{lem:kornorlicz}.
\begin{lemma}
There exist $0<\sigma_1<1$ depending on $n,p,q$ such that for any $B_\varrho(x_0)\subset\subset\Omega$ we have
\begin{equation*}
\begin{split}
&\dashint_{B_{\varrho}(x_0)}   H_{1+|(\E\bfu)_{x_0,\varrho}|}\left(x_0,\frac{|{\bf u}-\bm\ell_{x_0,\varrho}|}{\varrho}\right)\,\mathrm{d}x \\
& \leq c \left(1+ [\mu]_{C^\alpha}\|\E\bfu\|^{q-p}_{L^p(\Omega)}\varrho^{\alpha-\frac{(q-p)n}{p}}\right) \left(\dashint_{B_{\varrho}(x_0)} \left[H_{1+|(\E\bfu)_{x_0,\varrho}|}(x_0, |\E\bfu-(\E\bfu)_{x_0,\varrho}|)\right]^{\sigma_1}\,\mathrm{d}x \right)^{\frac{1}{\sigma_1}}
\end{split}
\end{equation*}
for some constant $c=c(n,p,q)>1$.
\label{lem:thm3.2ok}
\end{lemma}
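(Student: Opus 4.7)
The plan is to combine the Orlicz Sobolev--Poincar\'e inequality (Theorem~\ref{thm:sob-poincare}) with an Orlicz Korn-type inequality for the shifted $N$-function, and then to absorb the $x$-dependence of $H$ via the H\"older regularity of $\mu$. Throughout, write $a := 1+|(\E\bfu)_{x_0,\varrho}|$ and $G_a(t) := H_a(x_0,t)$; the $\Delta_2$-constants of $G_a$ and $G_a^*$ depend only on $p$ and $q$, uniformly in $x_0$ and in $a\geq 0$.

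First, since $(\bfu-\bm\ell_{x_0,\varrho})_{x_0,\varrho}={\bf 0}$, applying Theorem~\ref{thm:sob-poincare} to $\bfu-\bm\ell_{x_0,\varrho}$ with the $N$-function $G_a$ produces an exponent $\theta = \theta(n,p,q)\in(0,1)$ such that
\begin{equation*}
\dashint_{B_\varrho} G_a\!\left(\frac{|\bfu-\bm\ell_{x_0,\varrho}|}{\varrho}\right)\mathrm{d}x \leq c\left(\dashint_{B_\varrho} G_a^{\theta}(|D\bfu-(D\bfu)_{x_0,\varrho}|)\,\mathrm{d}x\right)^{1/\theta}.
\end{equation*}
Next, because $\W(\bfu-\bm\ell_{x_0,\varrho})$ has zero mean on $B_\varrho$, a Korn-type inequality in the Orlicz setting (an appropriate variant of Lemma~\ref{lem:kornorlicz}) permits passing from the full gradient to the symmetric one:
\begin{equation*}
\dashint_{B_\varrho} G_a^{\theta}(|D\bfu-(D\bfu)_{x_0,\varrho}|)\,\mathrm{d}x \leq c \dashint_{B_\varrho} G_a^{\theta}(|\E\bfu-(\E\bfu)_{x_0,\varrho}|)\,\mathrm{d}x.
\end{equation*}

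Finally, the prefactor $1+[\mu]_{C^\alpha}\|\E\bfu\|^{q-p}_{L^p(\Omega)}\varrho^{\alpha-(q-p)n/p}$ arises from freezing $\mu(x)$ at $x_0$ in the $q$-phase summand of $H$ during the two steps above: writing $\mu(x)=\mu(x_0)+(\mu(x)-\mu(x_0))$ and using $|\mu(x)-\mu(x_0)|\leq[\mu]_{C^\alpha}\varrho^\alpha$, the residual term $[\mu]_{C^\alpha}\varrho^\alpha t^q$ is controlled by H\"older interpolation between the global $L^p(\Omega)$-norm of $\E\bfu$ and the higher integrability bound of Lemma~\ref{lem:higint}, the exponent $\alpha-(q-p)n/p\geq 0$ being ensured by the gap condition \eqref{eq:pq}. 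Setting $\sigma_1:=\theta$ completes the argument.

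The hardest point is the Korn step: Lemma~\ref{lem:kornorlicz} is formulated for $W^{1,G}_0(B)$-functions, while $\bfu-\bm\ell_{x_0,\varrho}$ only has zero mean on $B_\varrho$. The standard workaround is to subtract the best rigid motion $\mathcal{P}_{B_\varrho}\bfu$ from Section~\ref{sec:affinefunct}, which satisfies $\E(\mathcal{P}_{B_\varrho}\bfu)=\mathbf{0}$, and to exploit the self-improving exponent $\theta<1$ delivered by Theorem~\ref{thm:sob-poincare}, since Jensen's inequality is available in only one direction at that exponent. Tracking the prefactor also requires a case distinction between the $p$-phase regime (where $\mu(x_0)\lesssim \varrho^{\alpha-\varepsilon_0}$) and the genuine $(p,q)$-phase regime, in the same spirit as the two-step argument of Lemma~\ref{lem:lemma4.1}.
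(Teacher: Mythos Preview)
Your overall strategy (Sobolev--Poincar\'e plus Korn, with a $p$-phase/$(p,q)$-phase split) matches the paper, but your account of the prefactor is wrong and reflects a genuine misunderstanding. Both sides of the inequality in the lemma involve $H_a(x_0,\cdot)$ with $\mu$ \emph{already} evaluated at $x_0$; there is no $x$-dependence to ``freeze'', and Lemma~\ref{lem:higint} plays no role whatsoever here. The prefactor comes entirely from the $p$-phase case $\sup_{B_\varrho}\mu\leq 4[\mu]_{C^\alpha}\varrho^\alpha$: the paper bounds $\mu(x_0)\leq 4[\mu]_{C^\alpha}\varrho^\alpha$, applies the \emph{power-function} Sobolev--Poincar\'e and Korn inequalities separately to the summands $(a+t)^p$ and $(a+t)^q$ of $H_a(x_0,t)$ (with Sobolev exponents $\frac{np}{n+p},\frac{nq}{n+q}>1$), and then uses a straight H\"older interpolation against the \emph{global} norm $\|\E\bfu\|_{L^p(\Omega)}$ to convert the $q$-average into $\varrho^{-n(q-p)/p}\|\E\bfu\|_{L^p(\Omega)}^{q-p}$ times a $(p\sigma_1)$-average, with the explicit choice $\sigma_1=\frac{nq}{p(n+q)}<1$. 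In the $(p,q)$-phase case the Orlicz tools apply directly with no extra factor at all.

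There is also a technical gap in your Korn step. You want $\dashint G_a^{\theta}(|D\bfu-(D\bfu)_{x_0,\varrho}|)\leq c\dashint G_a^{\theta}(|\E\bfu-(\E\bfu)_{x_0,\varrho}|)$, but $G_a^{\theta}$ need not be an admissible $N$-function: since $G_a(t)\sim t^2$ near $t=0$, one has $G_a^{\theta}(t)\sim t^{2\theta}$, which is concave for $\theta<\tfrac12$, so Lemma~\ref{lem:kornorlicz} does not apply. The paper sidesteps this precisely by splitting into genuine powers in the $p$-phase case. Finally, your concern about subtracting $\mathcal{P}_{B_\varrho}\bfu$ is unnecessary: as noted in Section~\ref{sec:affinefunct}, $(\W(\bfu-\bm\ell_{x_0,\varrho}))_{x_0,\varrho}=\mathbf{0}$ already, which is exactly the hypothesis under which Lemma~\ref{lem:kornorlicz} is stated.
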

\begin{proof}
Let $a\geq0$ and denote by $H_a(x_0,t)$ the shifted $N$-function of $H(x_0,t)$ defined as in \eqref{eq:phi_shifted}. Note that, by the definition of $H$ and \eqref{(2.6b)}, we have
\begin{equation}
\begin{split}
H_a(x_0,t) & \sim (a+t)^{p-2} t^2 + \mu(x_0) (a+t)^{q-2} t^2 \\
& \,\,\,\, \,\,\,\, =: G_{p,a}(t) + \mu(x_0) G_{q,a}(t)\,,
\end{split}
\end{equation}
where the $N$-functions $G_{p,a}$ and $G_{q,a}$ are equivalent to the corresponding shifted $N$-functions of $t^p$ and $t^q$, respectively. 
Note that
\begin{equation}
G_{p,a}(t) \sim a^p + t^p \sim (a+t)^p\,, \quad G_{q,a}(t) \sim a^q + t^q \sim (a+t)^q\,.
\end{equation}

We first assume that
\begin{equation}
\sup_{x\in B_\varrho} \mu(x) \leq 4 [\mu]_{C^\alpha} \varrho^\alpha\,.
\label{eq:(3.4)ok2018}
\end{equation}
To enlighten the notation, we set
\begin{equation*}
{\bf f}_{x_0,\varrho}:= {\bf u}-\bm\ell_{x_0,\varrho}\,.
\end{equation*}
Note that $({\bf f}_{x_0,\varrho})_{x_0,\varrho}={\bf 0}$.
Using the Sobolev-Poincar\'e inequality, together with \eqref{eq:(3.4)ok2018}, we get 
\begin{equation}
\begin{split}
\dashint_{B_\varrho}\left(a+\frac{|{\bf f}_{x_0,\varrho}|}{\varrho}\right)^p \,\mathrm{d}x & \leq c  \left(\dashint_{B_\varrho} \left(a+|D{\bf f}_{x_0,\varrho}|\right)^\frac{np}{n+p}\,\mathrm{d}x\right)^\frac{n+p}{n}\,, \\
\dashint_{B_\varrho} \mu(x_0)\left(a+\frac{|{\bf f}_{x_0,\varrho}|}{\varrho}\right)^q \,\mathrm{d}x & \leq c 4 [\mu]_{C^\alpha} \varrho^\alpha  \left(\dashint_{B_\varrho} (a+|D{\bf f}_{x_0,\varrho}|)^\frac{nq}{n+q}\,\mathrm{d}x\right)^\frac{n+q}{n}\,. \label{eq:stimaaa}
\end{split}
\end{equation}
Now, invoking the Korn's inequality of Lemma~\ref{lem:kornorlicz}, we have
\begin{equation*}
\begin{split}
\dashint_{B_\varrho}\left(a+\frac{|{\bf f}_{x_0,\varrho}|}{\varrho}\right)^p \,\mathrm{d}x & \leq c_{\rm Korn} \left(\dashint_{B_\varrho} \left(a+|\E{\bf f}_{x_0,\varrho}|\right)^\frac{np}{n+p}\,\mathrm{d}x\right)^\frac{n+p}{n}\,, \\
\dashint_{B_\varrho} \mu(x_0)\left(a+\frac{|{\bf f}_{x_0,\varrho}|}{\varrho}\right)^q \,\mathrm{d}x & \leq c c_{\rm Korn} 4 [\mu]_{C^\alpha} \varrho^\alpha \left(\dashint_{B_\varrho} (a+|\E{\bf f}_{x_0,\varrho}|)^\frac{nq}{n+q}\,\mathrm{d}x\right)^\frac{n+q}{n}\,.
\end{split}
\end{equation*}
By H\"older's inequality, we get
\begin{equation}
\begin{split}
\left(\dashint_{B_\varrho} (a+|\E{\bf f}_{x_0,\varrho}|)^\frac{nq}{n+q}\,\mathrm{d}x\right)^\frac{n+q}{n} & \leq \left(\dashint_{B_\varrho} (a+|\E{\bf f}_{x_0,\varrho}|)^p\,\mathrm{d}x\right)^\frac{q-p}{p} \left(\dashint_{B_\varrho} (a+|\E{\bf f}_{x_0,\varrho}|)^\frac{nq}{n+q}\,\mathrm{d}x\right)^\frac{p(n+q)}{n} \\
& \leq \varrho^{-\frac{n(q-p)}{p}} \|a+|\E\bfu|\|_{L^p(\Omega)}^{q-p} \left(\dashint_{B_\varrho} (a+|\E{\bf f}_{x_0,\varrho}|)^{p\sigma_1}\,\mathrm{d}x\right)^\frac{1}{\sigma_1}\,,
\end{split}
\end{equation}
where we have set $\sigma_1:= \frac{nq}{p(n+q)}<1$. Combining with \eqref{eq:stimaaa}, we then obtain
\begin{equation}
\begin{split}
\dashint_{B_\varrho} & \left[\left(a+\frac{|{\bf f}_{x_0,\varrho}|}{\varrho}\right)^p \,\mathrm{d}x+ \mu(x_0)\left(a+\frac{|{\bf f}_{x_0,\varrho}|}{\varrho}\right)^q \right]\,\mathrm{d}x  \\
& \leq c \left[1+4 [\mu]_{C^\alpha} \varrho^{\alpha-\frac{n(q-p)}{p}} \|a+|\E\bfu|\|_{L^p(\Omega)}^{q-p}\right] \left(\dashint_{B_\varrho} (a+|\E{\bf f}_{x_0,\varrho}|)^{p\sigma_1}\,\mathrm{d}x\right)^\frac{1}{\sigma_1}\,.
\end{split}
\end{equation}
{If instead \eqref{eq:(3.4)ok2018} does not hold, we can apply Theorem~\ref{thm:sob-poincare} and Lemma~\ref{lem:kornorlicz} with $G(t):=H_a(x_0,t)$. The proof is concluded.}
\end{proof}

{Now, we are in position to establish a higher integrability result for $H_{1+|(\E\bfu)_{x_0,\varrho}|}(x_0, |\E\bfu-(\E\bfu)_{x_0,\varrho}|)$. The result follows from Lemma~\ref{lem:thm3.2ok} and Lemma~\ref{lem:lemma4.1} as a consequence of Gehring's lemma with increasing supports (Lemma~\ref{lem:gehring}): 
\begin{corollary}\label{corollary3.2}
There exist a  constant $c_{\rm high}=c_{\rm high}(data)>0$ and $\sigma>1$ such that 
\begin{equation}
\begin{split}
&\left(\dashint_{B_{\varrho/2}(x_0)} \left[H_{1+|(\E\bfu)_{x_0,\varrho}|}(x_0, |\E\bfu-(\E\bfu)_{x_0,\varrho}|)\right]^\sigma\,\mathrm{d}x \right)^{\frac{1}{\sigma}} \\
&\leq c_{\rm high} \dashint_{B_{\varrho}(x_0)} H_{1+|(\E\bfu)_{x_0,\varrho}|}(x_0, |\E\bfu-(\E\bfu)_{x_0,\varrho}|)\,\mathrm{d}x + c_{\rm high} \varrho^{\widetilde{\gamma}}\, H(x_0, 1+|(\E\bfu)_{x_0,\varrho}|)\,.
\end{split}
\label{eq:caccioppoliIbis}
\end{equation} 
\end{corollary}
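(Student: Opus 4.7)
The strategy is the classical one for Gehring-type higher integrability: combine a Caccioppoli estimate with a reverse Sobolev-Poincar\'e estimate to produce a reverse H\"older inequality on pairs of concentric balls, and then upgrade the integrability via Lemma~\ref{lem:gehring}. Fix once and for all the shift $a := 1+|(\E\bfu)_{x_0,\varrho}|$ and the affine map $\bm\ell_{x_0,\varrho}$, and set
\begin{equation*}
f := H_a(x_0, |\E\bfu-(\E\bfu)_{x_0,\varrho}|), \qquad g := \varrho^{\widetilde{\gamma}}\,H(x_0, a).
\end{equation*}

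The first step is to produce, for every sub-ball $B_r(y)$ with $B_{2r}(y)\subset B_\varrho(x_0)$, a reverse H\"older estimate of the form
\begin{equation*}
\dashint_{B_r(y)} f\,\mathrm{d}x \,\le\, c\left(\dashint_{B_{2r}(y)} f^{\sigma_1}\,\mathrm{d}x\right)^{1/\sigma_1} + c\,g,
\end{equation*}
with the exponent $\sigma_1=\frac{nq}{p(n+q)}\in(0,1)$ provided by Lemma~\ref{lem:thm3.2ok}. I would obtain this by repeating the argument of Lemma~\ref{lem:lemma4.1} on the pair $B_{r}(y)\subset B_{2r}(y)$, but building the test function from the \emph{fixed} affine map $\bm\ell_{x_0,\varrho}$ rather than the affine map at $(y,r)$, so that the shift of the $N$-function in all resulting terms is the fixed $a$. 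The Caccioppoli step then controls $\dashint_{B_r(y)} f\,\mathrm{d}x$ by $\dashint_{B_{2r}(y)} H_a(x_0,|\bfu-\bm\ell_{x_0,\varrho}|/(2r))\,\mathrm{d}x$ plus a correction of order $r^{\bar{\gamma}}H(x_0,a)\le g$. Lemma~\ref{lem:thm3.2ok}, again with the fixed shift, bounds the first term by $c(\dashint_{B_{2r}(y)} f^{\sigma_1}\,\mathrm{d}x)^{1/\sigma_1}$, at the cost of an additional multiplicative factor $(1+\varrho^{\alpha-n(q-p)/p}\|\E\bfu\|_{L^p}^{q-p})$, which is bounded thanks to \eqref{eq:pq} and the definition \eqref{eq:3.7ok} of $\varepsilon_0$.

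Once the reverse H\"older inequality is available on every sub-ball, Lemma~\ref{lem:gehring} applies directly with $\theta=\sigma_1$ and yields $\sigma>1$ and a constant $c_{\rm high}$ such that
\begin{equation*}
\left(\dashint_{B_{\varrho/2}(x_0)} f^\sigma\,\mathrm{d}x\right)^{1/\sigma} \,\le\, c_{\rm high}\dashint_{B_\varrho(x_0)} f\,\mathrm{d}x + c_{\rm high}\,g,
\end{equation*}
which is precisely \eqref{eq:caccioppoliIbis}. The value $\widetilde{\gamma}$ in the statement is inherited from $\bar{\gamma}=\min\{\varepsilon_0,1-1/(\beta+1)\}$ of Lemma~\ref{lem:lemma4.1}.

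The main obstacle is the consistent use of the fixed shift $a$ throughout the sub-ball argument: the Caccioppoli and Sobolev-Poincar\'e arguments naturally employ the shift $1+|(\E\bfu)_{y,r}|$ at scale $(y,r)$, and to replace it with $a$ one has to repeatedly invoke the change-of-shift lemma (Lemma~\ref{lem:changeshift}), absorbing the resulting small multiples of $f$ using the non-degeneracy hypothesis \eqref{eq:smallness} that controls $\Phi(x_0,\varrho)$ by $H(x_0,a)$. A secondary technicality is verifying that the smallness conditions \eqref{eq:5.6acming}--\eqref{eq:smallness} of Lemma~\ref{lem:lemma4.1} transfer to the sub-ball $B_{2r}(y)$; this follows from the ambient assumption plus Korn's and Jensen's inequalities, as already used in \eqref{eq:otherestimate}.
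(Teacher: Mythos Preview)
Your plan is correct and is precisely the route the paper takes: the paper records only that the corollary follows by combining Lemma~\ref{lem:lemma4.1} with Lemma~\ref{lem:thm3.2ok} via Gehring's lemma (Lemma~\ref{lem:gehring}), and you have spelled out exactly this scheme together with the shift-consistency issues the paper leaves implicit. One small refinement: at each sub-ball $B_{2r}(y)$ the comparison affine map should retain the fixed symmetric gradient $(\E\bfu)_{x_0,\varrho}$ but have its constant and skew-symmetric parts chosen so that $(\bfu-\bm\ell)_{y,2r}={\bf 0}$ and $(\W(\bfu-\bm\ell))_{y,2r}={\bf 0}$, since this is what licenses the Sobolev--Poincar\'e and Korn steps on the sub-ball while leaving $\E(\bfu-\bm\ell)=\E\bfu-(\E\bfu)_{x_0,\varrho}$ and hence the shift unchanged.
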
}

\subsection{Almost $\mathcal{A}$-Stokes functions} \label{sec:almostAStokes}

{We can start with the linearization procedure for system \eqref{eq:1.1ok}.} Let us set

\begin{equation}\label{operatorA}
\mathcal{A}\colon= \frac{ D_\xi{\bf a}(x_0, (\E\bfu)_{x_0,\varrho})}{ H''(x_0, 1+|(\E\bfu)_{x_0,\varrho}|)} \quad \mbox{ and } \quad {\bf w}:= {\bfu - \bm\ell_{x_0,\varrho}}\,.
\end{equation}
Note that the bilinear form $\mathcal{A}$ satisfies the Legendre-Hadamard condition \eqref{eq:LegHam} by virtue of \eqref{eq:1.8ok}.

We aim to prove that ${\bf w}$ is approximately $\mathcal{A}$-Stokes. {This fact, together with the higher integrability result \eqref{eq:caccioppoliIbis} will allow us to apply the $\mathcal{A}$-Stokes approximation theorem.}

\begin{lemma}
Let $B_{\varrho}(x_0)\subset\subset\Omega$. {Assume that \eqref{eq:5.6acming} and \eqref{eq:smallness} hold, and let $\varrho\leq1$. }
Then there exists a constant $c_{\rm Stokes}>0$ such that
\begin{equation}
\begin{split}
& \left|\dashint_{B_{\varrho/2}(x_0)}\mathcal{A}\E{\bf w}:\E\bm\varphi\,\mathrm{d}x\right| \\
& \leq c_{\rm Stokes} (1+|(\E\bfu)_{x_0,\varrho}|)\mathcal{R}\left(\frac{\Phi(x_0,\varrho)}{H(x_0, 1+|(\E\bfu)_{x_0,\varrho}|)}+\varrho^{\widetilde{\gamma}}\right)\|D\bm\varphi\|_{L^\infty(B_{\varrho/2}(x_0))} 
\end{split}
\label{eq:4.12ok}
\end{equation}
for every $\bm\varphi\in C_{0,{\rm div}}^\infty(B_{\varrho/2}(x_0))$, where
$\mathcal{R}(t):= [\omega(t^\frac{1}{2}) + t ]^\frac{1}{2} \sqrt{t}$.
\label{lem:lemma4.2ok}
\end{lemma}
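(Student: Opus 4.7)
The plan is to linearize ${\bf a}$ around $(\E\bfu)_{x_0,\varrho}$ and quantify the deviation from an $\mathcal{A}$-Stokes system through the excess. Setting $a:=1+|(\E\bfu)_{x_0,\varrho}|$, by \eqref{operatorA} we have $H''(x_0,a)\mathcal{A}=D_\xi{\bf a}(x_0,(\E\bfu)_{x_0,\varrho})$, so a first-order Taylor expansion of ${\bf a}(x_0,\cdot)$ at $(\E\bfu)_{x_0,\varrho}$ gives
$$
D_\xi{\bf a}(x_0,(\E\bfu)_{x_0,\varrho})(\E\bfu-(\E\bfu)_{x_0,\varrho})=[{\bf a}(x_0,\E\bfu)-{\bf a}(x_0,(\E\bfu)_{x_0,\varrho})]-E(x),
$$
with remainder
$$
E(x)=\int_0^1\bigl[D_\xi{\bf a}(x_0,(\E\bfu)_{x_0,\varrho}+\tau(\E\bfu-(\E\bfu)_{x_0,\varrho}))-D_\xi{\bf a}(x_0,(\E\bfu)_{x_0,\varrho})\bigr](\E\bfu-(\E\bfu)_{x_0,\varrho})\,d\tau.
$$
Testing against $\E\bm\varphi$ for $\bm\varphi\in C^\infty_{0,{\rm div}}(B_{\varrho/2})$, the constant tensor ${\bf a}(x_0,(\E\bfu)_{x_0,\varrho})$ contributes zero since ${\rm div}\,\bm\varphi=0$, and \eqref{eq:system2} allows me to replace $\dashint{\bf a}(x,\E\bfu):\E\bm\varphi\,dx$ by $\dashint(\bfu[D\bfu]+f)\cdot\bm\varphi\,dx$. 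This yields the decomposition
$$
H''(x_0,a)\dashint_{B_{\varrho/2}}\mathcal{A}(\E{\bf w},\E\bm\varphi)\,dx=I+II-III,
$$
where $I=\dashint[{\bf a}(x_0,\E\bfu)-{\bf a}(x,\E\bfu)]:\E\bm\varphi\,dx$, $II=\dashint(\bfu[D\bfu]+f)\cdot\bm\varphi\,dx$, $III=\dashint E:\E\bm\varphi\,dx$. The target \eqref{eq:4.12ok} follows by bounding each piece by $H''(x_0,a)\cdot a\cdot\mathcal{R}(\cdots)\|D\bm\varphi\|_\infty$ and using $H''(x_0,a)\sim H(x_0,a)/a^2$.

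For $I$, the H\"older continuity \eqref{eq:1.11ok} produces $|I|\le c[\mu]_{C^\alpha}\varrho^\alpha\|D\bm\varphi\|_\infty\dashint(1+|\E\bfu|)^{q-1}\,dx$. Splitting into the $p$-phase regime \eqref{eq:4.5ok} (where the higher integrability \eqref{eq:3.9ok} absorbs the $\varrho^\alpha$) and the $(p,q)$-phase regime $\varrho^\alpha\lesssim\varrho^{\varepsilon_0}\mu(x_0)$ (where Jensen via \eqref{eq:otherestimate} applies), as in the proof of Lemma~\ref{lem:lemma4.1}, gives $|I|\le c\varrho^{\widetilde\gamma}H(x_0,a)\|D\bm\varphi\|_\infty/a$. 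Term $II$ is treated using $\|\bm\varphi\|_\infty\le\varrho\|D\bm\varphi\|_\infty$ (compact support in $B_{\varrho/2}$) together with the Sobolev–Korn/H\"older estimates already performed on $J_4,J_5$ in Lemma~\ref{lem:lemma4.1} and the smallness assumption \eqref{eq:5.6acming}, producing a factor $c\varrho^{1-1/(\beta+1)}H(x_0,a)/a$, again inside the $\varrho^{\widetilde\gamma}$ part of $\mathcal{R}$.

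The decisive term is $III$. Applying \eqref{eq:1.12ok} at $\bm\xi_1=(\E\bfu)_{x_0,\varrho}+\tau(\E\bfu-(\E\bfu)_{x_0,\varrho})$, $\bm\xi_2=(\E\bfu)_{x_0,\varrho}$ gives
$$
|E(x)|\le c\,\omega\!\left(\frac{|\E\bfu-(\E\bfu)_{x_0,\varrho}|}{a+|\E\bfu-(\E\bfu)_{x_0,\varrho}|}\right)H''(x_0,a+|\E\bfu-(\E\bfu)_{x_0,\varrho}|)\,|\E\bfu-(\E\bfu)_{x_0,\varrho}|.
$$
Using $H''(x_0,a+t)t^2\sim H_a(x_0,t)$ from \eqref{(2.6b)}, Cauchy–Schwarz in $x$ separates out $\sqrt{\Phi(x_0,\varrho)}$ from a factor controlled by $\omega^2$; the bound $\omega^2\le\omega$, together with a H\"older inequality tuned to the higher integrability exponent $\sigma$ of Corollary~\ref{corollary3.2} applied to \eqref{eq:caccioppoliIbis}, and Jensen's inequality on the concave $\omega$, converts this factor into $\omega\bigl((\Phi(x_0,\varrho)/H(x_0,a)+\varrho^{\widetilde\gamma})^{1/2}\bigr)H(x_0,a)/a^2$. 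This is precisely what assembles into $\mathcal{R}^2(t)=t\omega(\sqrt t)+t^2$: the $\omega$-factor encodes the linearization error and the added $t^2$ accommodates the shift $\varrho^{\widetilde\gamma}$ coming from $I$ and $II$.

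Summing the three bounds and dividing by $H''(x_0,a)$ delivers \eqref{eq:4.12ok}. The step I expect to be the main obstacle is the last manipulation on $III$: arranging Cauchy–Schwarz and the reverse-H\"older self-improvement \eqref{eq:caccioppoliIbis} so that the $\sqrt t$ appears inside $\omega$ \emph{uniformly across both regimes of $\mu$}, which is what forces the precise form of $\mathcal{R}$ and ultimately powers the $\mathcal{A}$-Stokes approximation in the sequel.
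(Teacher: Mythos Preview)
Your decomposition $I+II-III$ and the handling of $I$ (phase dichotomy \`a la Lemma~\ref{lem:lemma4.1}) and $II$ (lower-order terms via Sobolev--Korn and \eqref{eq:5.6acming}) coincide with the paper's argument; these are its $J_{2,1}$, $J_{2,2}+J_{2,3}$, and $J_1$ respectively.

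The one place your sketch diverges is the Taylor remainder $III$. The paper does \emph{not} invoke Corollary~\ref{corollary3.2} here at all: instead it splits the domain pointwise into $E=\{|\E\bfu-(\E\bfu)_\varrho|>1+|(\E\bfu)_\varrho|\}$ and $F=B_{\varrho/2}\setminus E$. On $E$ one simply uses $\omega\le 1$ and $G(a+t)\lesssim G_a(t)$ to get the contribution $\Phi/H(x_0,a)$; on $F$ the integrand reduces to $G'(a)\,\omega(t/a)\,(t/a)$, and only then does one apply Cauchy--Schwarz and Jensen for the concave function $\omega(s^{1/2})$ to produce $[\omega((\Phi/H(x_0,a))^{1/2})]^{1/2}(\Phi/H(x_0,a))^{1/2}$. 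Your proposed route---Cauchy--Schwarz first, then control the factor $(\dashint\omega^2 H''(x_0,a+t)\,dx)^{1/2}$---can be made to work, but it still requires the $E/F$ split to tame $H''(x_0,a+t)$ on the large-oscillation set (where $H''$ is unbounded for $p\ge 2$); the reverse-H\"older estimate \eqref{eq:caccioppoliIbis} is neither needed nor obviously helpful for this. Also, your closing worry about ``both regimes of $\mu$'' is misplaced for $III$: the linearization error involves only the frozen function $H(x_0,\cdot)$, so no phase dichotomy enters there---the $p$/$(p,q)$ split is exclusively for $I$.
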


\begin{proof}
It will suffice to prove \eqref{eq:4.12ok} for any fixed $\bm\varphi\in C_0^\infty(B_{\varrho/2}(x_0))$ with ${\rm div}\,\bm\varphi=0$ such that $\|D\bm\varphi\|_{L^\infty(B_{\varrho/2}(x_0))}\leq1$, since the general case will follow by a standard normalization argument. To enlighten notation, we omit the explicit dependence on $x_0$.

From the definitions of $\mathcal{A}$ and ${\bf w}$ we have
\begin{equation}
\begin{split}
& H''(x_0,1+|(\E\bfu)_\varrho|) \dashint_{B_{\varrho/2}}\mathcal{A}\E{\bf w}:\E\bm\varphi\,\mathrm{d}x  \\
&\,\, = \dashint_{B_{\varrho/2}} D_\xi{\bf a}(x_0, (\E\bfu)_\varrho) \E(\bfu-\bm\ell_\varrho):\E\bm\varphi\,\mathrm{d}x \\
& \,\, = \dashint_{B_{\varrho/2}} \int_0^1 \left[D_\xi{\bf a}(x_0, (\E\bfu)_\varrho) - D_\xi{\bf a}(x_0, (\E\bfu)_\varrho + t \E(\bfu-\bm\ell_\varrho))\right]\E(\bfu-\bm\ell_\varrho) : \E\bm\varphi\,\mathrm{d}t\,\mathrm{d}x \\
& \,\, \,\,\,\, + \dashint_{B_{\varrho/2}} \int_0^1 \left[D_\xi{\bf a}(x_0, (\E\bfu)_\varrho + t \E(\bfu-\bm\ell_\varrho))\right]\E(\bfu-\bm\ell_\varrho) : \E\bm\varphi\,\mathrm{d}t\,\mathrm{d}x \\
& \,\, =: J_1 + J_2\,.
\end{split}
\label{eq:4.13ok}
\end{equation}
We set $G(t):=H(x_0, t)$. 
From \eqref{eq:1.12ok}, \eqref{ineq:phiast_phi_p}, 
\eqref{(2.1celokbis)} and Lemma~\ref{technisch-mu},  we have
\begin{equation*}
\begin{split}
|J_1| & \leq c \dashint_{B_{\varrho/2}}  \int_0^1 \omega\left(\frac{t |\E\bfu-(\E\bfu)_\varrho|}{1+|(\E\bfu)_\varrho|}\right) G''(1+|(\E\bfu)_\varrho|+|(\E\bfu)_\varrho+t(\E\bfu-(\E\bfu)_\varrho)|) \,\mathrm{d}t\,|\E\bfu-(\E\bfu)_\varrho|\,\mathrm{d}x \\
 & \leq c \dashint_{B_{\varrho/2}}  \omega\left(\frac{ |\E\bfu-(\E\bfu)_\varrho|}{1+|(\E\bfu)_\varrho|}\right) \left[\frac{G'(1+|(\E\bfu)_\varrho|+|\E\bfu-(\E\bfu)_\varrho|)}{1+|(\E\bfu)_\varrho|+|\E\bfu-(\E\bfu)_\varrho|}\right]|\E\bfu-(\E\bfu)_\varrho|\,\mathrm{d}x \\
& \leq c \dashint_{B_{\varrho/2}} \mathbbm{1}_E(x)  \omega\left(\frac{ |\E\bfu-(\E\bfu)_\varrho|}{1+|(\E\bfu)_\varrho|}\right) \frac{G'(1+|(\E\bfu)_\varrho|+|\E\bfu-(\E\bfu)_\varrho|)}{1+|(\E\bfu)_\varrho|}|\E\bfu-(\E\bfu)_\varrho|\,\mathrm{d}x \\
& \,\,\,\,\,\, + c \dashint_{B_{\varrho/2}} \mathbbm{1}_F(x)  \omega\left(\frac{ |\E\bfu-(\E\bfu)_\varrho|}{1+|(\E\bfu)_\varrho|}\right) \frac{G'(1+|(\E\bfu)_\varrho|+|\E\bfu-(\E\bfu)_\varrho|)}{1+|(\E\bfu)_\varrho|}|\E\bfu-(\E\bfu)_\varrho|\,\mathrm{d}x \\
& =: J_{1,E} + J_{1,F}\,,
\end{split}
\end{equation*}
where $E:=\{x\in B_{\varrho/2}:\,\, |\E\bfu(x)-(\E\bfu)_\varrho|>1+|(\E\bfu)_\varrho|\}$, and $F:=B_\varrho\backslash E$.

We start with the estimate of $J_{1,E}$. We have
\begin{equation*}
\begin{split}
\frac{|J_{1,E}|}{G'(1+|(\E\bfu)_\varrho|)} 
& \leq c \dashint_{B_{\varrho/2}}  \mathbbm{1}_E(x)  \omega\left(\frac{|\E\bfu-(\E\bfu)_\varrho|}{1+|(\E\bfu)_\varrho|}\right) \frac{G'(1+|(\E\bfu)_\varrho|+|\E\bfu-(\E\bfu)_\varrho|)}{G(1+|(\E\bfu)_\varrho|)}|\E\bfu-(\E\bfu)_\varrho|\,\mathrm{d}x\,.
\end{split}
\end{equation*}
For a.e. $x\in E$, we have
\begin{equation*}
1+|(\E\bfu)_\varrho|+|\E\bfu-(\E\bfu)_\varrho| \leq 2 |\E\bfu-(\E\bfu)_\varrho|\,,
\end{equation*}
and taking into account \eqref{(2.6d)}, 
we obtain
\begin{equation*}
\begin{split}
G'(1+|(\E\bfu)_\varrho|+|\E\bfu-(\E\bfu)_\varrho|) |\E\bfu-(\E\bfu)_\varrho| & \leq c G(|\E\bfu-(\E\bfu)_\varrho|) \\
& \leq c G(1+|(\E\bfu)_\varrho|+|\E\bfu-(\E\bfu)_\varrho|)\\
& \leq c G_{1+|(\E\bfu)_\varrho|}(|\E\bfu-(\E\bfu)_\varrho|)\,.
\end{split}
\end{equation*}
Now, using that $\omega\leq1$, we finally get
\begin{equation*}
\mathbbm{1}_E(x)  \omega\left(\frac{|\E\bfu-(\E\bfu)_\varrho|}{1+|(\E\bfu)_\varrho|}\right) \frac{G'(1+|(\E\bfu)_\varrho|+|\E\bfu-(\E\bfu)_\varrho|)}{G(1+|(\E\bfu)_\varrho|)}|\E\bfu-(\E\bfu)_\varrho| \leq \frac{G_{1+|(\E\bfu)_\varrho|}(|\E\bfu-(\E\bfu)_\varrho|)} {G(1+|(\E\bfu)_\varrho|)}\,,
\end{equation*}
whence
\begin{equation*}
|J_{1,E}| \leq c G'(1+|(\E\bfu)_\varrho|)\dashint_{B_{\varrho/2}} \frac{G_{1+|(\E\bfu)_\varrho|}(|\E\bfu-(\E\bfu)_\varrho|)} {G(1+|(\E\bfu)_\varrho|)}\,\mathrm{d}x\,.
\end{equation*}
For what concerns $J_{1,F}$, arguing as for $J_{1,E}$ we get the preliminary estimate
\begin{equation*}
\frac{|J_{1,F}|}{G'(1+|(\E\bfu)_\varrho|)}\leq c \dashint_{B_{\varrho/2}}  \mathbbm{1}_F(x)  \omega\left(\frac{|\E\bfu-(\E\bfu)_\varrho|}{1+|(\E\bfu)_\varrho|}\right) \frac{G'(1+|(\E\bfu)_\varrho|+|\E\bfu-(\E\bfu)_\varrho|)}{G(1+|(\E\bfu)_\varrho|)}|\E\bfu-(\E\bfu)_\varrho|\,\mathrm{d}x\,.
\end{equation*}
For a.e. $x\in F$, we have
\begin{equation*}
1+|(\E\bfu)_\varrho|+|\E\bfu-(\E\bfu)_\varrho| < 2(1+|(\E\bfu)_\varrho|)\,,
\end{equation*}
and with \eqref{ineq:phiast_phi_p} and since $G'$ is increasing, we can write  
\begin{equation*}
|J_{1,F}|\leq c G'(1+|(\E\bfu)_\varrho|)\dashint_{B_{\varrho/2}}  \mathbbm{1}_F(x)  \omega\left(\frac{|\E\bfu-(\E\bfu)_\varrho|}{1+|(\E\bfu)_\varrho|}\right) \frac{|\E\bfu-(\E\bfu)_\varrho|}{1+|(\E\bfu)_\varrho|}\,\mathrm{d}x\,.
\end{equation*}
Using the definition of $F$ again, the monotonicity of $G'$ and \eqref{ineq:phiast_phi_p}, 
for a.e. $x\in F$ we have
\begin{equation*}
\begin{split}
& \omega\left(\frac{|\E\bfu-(\E\bfu)_\varrho|}{1+|(\E\bfu)_\varrho|}\right) \frac{|\E\bfu-(\E\bfu)_\varrho|}{1+|(\E\bfu)_\varrho|} \\
&  \leq \omega\left(\left[\frac{G'(1+|(\E\bfu)_\varrho|)|\E\bfu-(\E\bfu)_\varrho|^2}{G'(1+|(\E\bfu)_\varrho|)(1+|(\E\bfu)_\varrho|)^2}\right]^\frac{1}{2}\right) \left[\frac{G'(1+|(\E\bfu)_\varrho|)|\E\bfu-(\E\bfu)_\varrho|^2}{G'(1+|(\E\bfu)_\varrho|)(1+|(\E\bfu)_\varrho|)^2} \right]^\frac{1}{2}\\
&  \leq \omega\left(\left[\frac{G'(1+|(\E\bfu)_\varrho|+|\E\bfu-(\E\bfu)_\varrho|)|\E\bfu-(\E\bfu)_\varrho|^2}{G(1+|(\E\bfu)_\varrho|)(1+|(\E\bfu)_\varrho|+|\E\bfu-(\E\bfu)_\varrho|)}\right]^\frac{1}{2}\right) \left[\frac{G'(1+|(\E\bfu)_\varrho|+|\E\bfu-(\E\bfu)_\varrho|)|\E\bfu-(\E\bfu)_\varrho|^2}{G(1+|(\E\bfu)_\varrho|)(1+|(\E\bfu)_\varrho|+|\E\bfu-(\E\bfu)_\varrho|)}\right]^\frac{1}{2}\,.
\end{split}
\end{equation*}
Using \eqref{(2.6b)} we then get
\begin{equation*}
\begin{split}
& \omega\left(\frac{|\E\bfu-(\E\bfu)_\varrho|}{1+|(\E\bfu)_\varrho|}\right) \frac{|\E\bfu-(\E\bfu)_\varrho|}{1+|(\E\bfu)_\varrho|} \\
&  \leq \omega\left(\left[\frac{G_{1+|(\E\bfu)_\varrho|}(|\E\bfu-(\E\bfu)_\varrho|)}{G(1+|(\E\bfu)_\varrho|)}\right]^\frac{1}{2}\right) \left[\frac{G_{1+|(\E\bfu)_\varrho|}(|\E\bfu-(\E\bfu)_\varrho|)}{G(1+|(\E\bfu)_\varrho|)}\right]^\frac{1}{2}\,.
\end{split}
\end{equation*}
{Combining the previous estimates and using H\"older's inequality, the fact that $\omega\leq1$ and Jensen's inequality for the concave function $\omega(t^\frac{1}{2})$, we get
\begin{equation*}
\frac{|J_{1,F}|}{G'(1+|(\E\bfu)_\varrho|)} \leq c \left\{ \left[\omega\left(\left[\dashint_{B_{\varrho/2}}\frac{G_{1+|(\E\bfu)_\varrho|}(|\E\bfu-(\E\bfu)_\varrho|)} {G(1+|(\E\bfu)_\varrho|)}\,\mathrm{d}x \right]^\frac{1}{2}\right) \right]^\frac{1}{2} \left( \dashint_{B_{\varrho/2}} \frac{G_{1+|(\E\bfu)_\varrho|}(|\E\bfu-(\E\bfu)_\varrho|)} {G(1+|(\E\bfu)_\varrho|)}\,\mathrm{d}x  \right)^\frac{1}{2}\right\}\,.
\end{equation*}
Collecting the estimates for $J_{1,E}$ and $J_{1,F}$, we then infer
\begin{equation}
\begin{split}
& \frac{|J_1|}{G''(1+|(\E\bfu)_\varrho|)(1+|(\E\bfu)_\varrho|)} \\
& \leq c \left[\frac{\Phi(\varrho)}{G(1+|(\E\bfu)_\varrho|)} + \sqrt{\omega\left(\left[\frac{\Phi(\varrho)}{G(1+|(\E\bfu)_\varrho|)}\right]^\frac{1}{2}\right)}\sqrt{\frac{\Phi(\varrho)}{G(1+|(\E\bfu)_\varrho|)}}\right] \,. 
\end{split}
\label{eq:4.14ok}
\end{equation}}

In order to estimate $J_2$, we use \eqref{eq:4.4ok}, the definition of weak solution, \eqref{eq:1.11ok} and we preliminarly obtain
\begin{equation}
\begin{split}
|J_2| & = \left|\dashint_{B_{\varrho/2}}  \left[{\bf a}(x_0, \E\bfu)-{\bf a}(x_0, (\E\bfu)_\varrho)\right]: \E\bm\varphi\,\mathrm{d}x \right| \\
 & = \left|\dashint_{B_{\varrho/2}}  \left[{\bf a}(x_0, \E\bfu)-{\bf a}(x, \E\bfu)\right]: \E\bm\varphi\,\mathrm{d}x + \dashint_{B_{\varrho/2}} [\bfu[D\bfu]+f]\bm\varphi\,\mathrm{d}x \right| \\
& \leq  c \dashint_{B_{\varrho/2}}|\mu(x)-\mu(x_0)|(1+|\E\bfu|)^{q-1}\,\mathrm{d}x + \dashint_{B_{\varrho/2}}|\bfu||D\bfu||\bm\varphi|\,\mathrm{d}x + \dashint_{B_{\varrho/2}}|f||\bm\varphi|\,\mathrm{d}x \\
& =: J_{2,1}+J_{2,2}+J_{2,3}.
\end{split}
\label{eq:4.15ok}
\end{equation}

Now, we have to distinguish between two cases, depending on whether condition \eqref{eq:4.5ok} is satisfied or not. \\
\\
{\bf Step 1: $p$-phase.} Under assumption \eqref{eq:4.5ok}, we have 
\begin{equation}
\begin{split}
|J_{2,1}| & \leq c \varrho^{\frac{\alpha-\varepsilon_0}{q}}\left(\dashint_{B_{\varrho/2}}|\mu(x)|^{\frac{q-1}{q}}(1+|\E\bfu|)^{q-1}\,\mathrm{d}x +  \dashint_{B_{\varrho/2}}|\mu(x_0)|^{\frac{q-1}{q}}(1+|\E\bfu|)^{q-1}\,\mathrm{d}x\right) \\
& \leq c \varrho^{\frac{\alpha-\varepsilon_0}{q}}\left[\left(\dashint_{B_{\varrho/2}}\mu(x)(1+|\E\bfu|)^{q}\,\mathrm{d}x\right)^\frac{q-1}{q} + \left(\dashint_{B_{\varrho/2}}\mu(x_0)(1+|\E\bfu|)^{q}\,\mathrm{d}x\right)^\frac{q-1}{q}\right]\,.
\end{split}
\label{eq:stimaJ2bis}
\end{equation}
An analogous argument as for \eqref{eq:4.6ok}-\eqref{eq:4.7ok} shows that
\begin{equation*}
\begin{split}
H(x_0, 1+|(\E\bfu)_\varrho|) & \leq c(1+\varrho^{\alpha-\varepsilon_0}(1+|(\E\bfu)_\varrho|)^{q-p}) (1+|(\E\bfu)_\varrho|)^p \\
& \leq c (1+|(\E\bfu)_\varrho|)^p\,,
\end{split}
\end{equation*}
whence, using the change of shift formula \eqref{(5.4diekreu)} and \eqref{eq:smallness}, we obtain
\begin{equation*}
\begin{split}
\dashint_{B_\varrho} H(x_0, 1+|\E\bfu|)\,\mathrm{d}x & \leq c_\eta \dashint_{B_\varrho} H_{1+|(\E\bfu)_\varrho|}(x_0,|\E\bfu - (\E\bfu)_\varrho|)\,\mathrm{d}x + \eta H(x_0, 1+|(\E\bfu)_\varrho|) \\
& \leq c H(x_0, 1+|(\E\bfu)_\varrho|) \\
& \leq c (1+|(\E\bfu)_\varrho|)^p\,.
\end{split}
\end{equation*}

With this and \eqref{eq:3.9okbis} we then get
\begin{equation*}
\begin{split}
\varrho^{\frac{\alpha-\varepsilon_0}{q}} \left(\dashint_{B_{\varrho/2}}\mu(x)(1+|\E\bfu|)^q\,\mathrm{d}x\right)^\frac{q-1}{q} & \leq \varrho^{\frac{\alpha-\varepsilon_0}{q}} \left(\dashint_{B_{\varrho}}(1+|\E\bfu|)^p\,\mathrm{d}x\right)^\frac{q-1}{q} \\
& \leq c \varrho^{\frac{\alpha-\varepsilon_0}{q}} \left(\dashint_{B_{\varrho}}(1+|\E\bfu|)^{ps_0}\,\mathrm{d}x\right)^\frac{q-p}{qs_0}\left(\dashint_{B_{\varrho}}H(x_0, 1+|\E\bfu|)^{ps_0}\,\mathrm{d}x\right)^\frac{p-1}{q} \\
& \leq c \varrho^{\frac{1}{q}(\alpha-\varepsilon_0-(q-p)n+2p\varepsilon_0)}\|1+|\E\bfu|\|_{L^{ps_0}(B_{\varrho})}(1+|(\E\bfu)_\varrho|)^\frac{p(p-1)}{q} \\
& \leq c \varrho^{\frac{\varepsilon_0}{q}}(1+|(\E\bfu)_\varrho|)^{p-1}\,.
\end{split}
\end{equation*}
A similar estimate holds for the second summand in the right hand side of \eqref{eq:stimaJ2bis}, with $\mu(x_0)$ in place of $\mu(x)$. 
Inserting these estimates in \eqref{eq:stimaJ2bis} and using the definition of $H_{1}(x_0, 1+|(\E\bfu)_\varrho|)$ we finally obtain
\begin{equation}
\begin{split}
\frac{|J_{2,1}|}{H''(x_0, 1+|(\E\bfu)_\varrho|)(1+|(\E\bfu)_\varrho|)} & \leq c \varrho^{\frac{\varepsilon_0}{q}}\,.\\
\end{split}
\label{eq:stimaJ2tris}
\end{equation}

{\bf Step 2: $(p,q)$-phase.} If \eqref{eq:4.5ok} does not hold, then \eqref{eq:stimamu1}  
is in force. Now, using \eqref{eq:smallness}, the triangle inequality and the change of shift formula, we get
\begin{equation*}
\begin{split}
\dashint_{B_{\varrho}}H(x_0,1+|\E\bfu|)\,\mathrm{d}x & \leq c \dashint_{B_{\varrho}}H(x_0,|\E\bfu-(\E\bfu)_\varrho|)\,\mathrm{d}x + c H(x_0, 1+|(\E\bfu)_\varrho|) \\
& \leq c_\eta \dashint_{B_{\varrho}}H_{1+|(\E\bfu)_\varrho|}(x_0,|\E\bfu-(\E\bfu)_\varrho|)\,\mathrm{d}x +(c+\eta)H(x_0,1+|(\E\bfu)_\varrho|) \\
& \leq c H(x_0,1+|(\E\bfu)_\varrho|)\,.
\end{split}
\end{equation*}
With this, H\"older's inequality and Jensen's inequality, we can estimate $J_{2,1}$ as
\begin{equation*}
\begin{split}
|J_{2,1}| & \leq c \dashint_{B_{\varrho/2}}\varrho^{\varepsilon_0}H'(x_0, 1+|\E\bfu|)\,\mathrm{d}x \\
& \leq c \varrho^{\varepsilon_0}\left(\dashint_{B_{\varrho}}[H'(x_0,1+|\E\bfu|)]^{\frac{2q-1}{2q-2}}\,\mathrm{d}x\right)^\frac{2q-2}{2q-1} \\
& \leq c \varrho^{\varepsilon_0}(H'(x_0,\cdot)\circ H^{-1}(x_0,\cdot))\left(\dashint_{B_{\varrho}}H(x_0,1+|\E\bfu|)\,\mathrm{d}x\right) \\
& \leq \varrho^{\varepsilon_0} H'(x_0, 1+|(\E\bfu)_\varrho|)\,,
\end{split}
\end{equation*}
which corresponds to \eqref{eq:stimaJ2tris}. 
In order to estimate $J_{2,2}$ and $J_{2,3}$, we may argue as in the proof of Lemma~\ref{lem:lemma4.1}, exploiting the smallness assumptions in \eqref{eq:5.6acming}, so we briefly sketch the proof. 

Setting $\gamma:=\left(\frac{p^*}{2}\right)'$ again, and using H\"older's inequality we have
\begin{equation*}
\begin{split}
|J_{2,2}| & \leq \left(\dashint_{B_{\varrho}} |\bfu|^{p^*}\,\mathrm{d}x\right)^\frac{1}{p^*}\left(\dashint_{B_{\varrho}} |D\bfu|^{\gamma}\,\mathrm{d}x\right)^\frac{1}{\gamma}\left(\dashint_{B_{\varrho}} |\bm\varphi|^{p^*}\,\mathrm{d}x\right)^\frac{1}{p^*} \,,
\end{split}
\end{equation*}
then the proof is similar. The only difference is the use of the Sobolev-Korn inequality \eqref{eq:2.10bogel} for $\bm\varphi$ with $|\E\bm\varphi|\leq1$.  We then obtain
\begin{equation}
|J_{2,2}| \leq c \varrho (1+|(\E\bfu)_\varrho|) \leq c \varrho H'(x_0, 1+|(\E\bfu)_\varrho|)\,.
\end{equation}
For what concerns $J_{2,3}$, using H\"older's inequality, the fact that $p^*>\frac{n}{n-1}$ and the Sobolev-Korn inequality for $\bm\varphi$ with $|\E\bm\varphi|\leq1$, we have
\begin{equation*}
\begin{split}
|J_{2,3}| & \leq c \left(\dashint_{B_\varrho} |f|^n\,\mathrm{d}x\right)^\frac{1}{n} \left(\dashint_{B_{\varrho}} |\bm\varphi|^{\frac{n}{n-1}}\,\mathrm{d}x\right)^\frac{n-1}{n} \\
& \leq c \varrho^{-\frac{1}{\beta+1}}\|f\|_{L^{n(1+\beta)}(\Omega)} \left(\dashint_{B_{\varrho}} |\bm\varphi|^{p^*}\,\mathrm{d}x\right)^\frac{1}{p^*} \\
& \leq c \varrho^{1-\frac{1}{\beta+1}}\\
& \leq c \varrho^{1-\frac{1}{\beta+1}}\,H'(x_0, 1+|(\E\bfu)_\varrho|)\,.
\end{split}
\end{equation*}
Collecting the previous estimates, we then obtain
\begin{equation}
\frac{|J_2|}{H'(x_0, 1+|(\E\bfu)_\varrho|)} \leq c \varrho^{\widetilde{\gamma}}\,,
\label{eq:stimaJ2final}
\end{equation}
where $\widetilde{\gamma}:=\min\{\varepsilon_0, 1- \frac{1}{\beta+1}\}$. The final estimate \eqref{eq:4.12ok} then follows inserting \eqref{eq:4.14ok} and \eqref{eq:stimaJ2final} into \eqref{eq:4.13ok}.
\end{proof}

\subsection{Excess decay estimates} \label{sec:excessdecay}

{In this section, we prove an excess improvement estimate for weak solutions to \eqref{eq:1.1ok}. This will be the content of  Lemma~\ref{lem:lemma4.3ok}. We start with a technical tool useful in the sequel.}
\begin{lemma}
Let $\vartheta\in(0,1)$. Assume that
 \begin{equation}
\frac{\Phi(x_0, \varrho)}{H(x_0,1+|(\E{\bf u})_{x_0, \varrho}|)}\leq \frac{\vartheta^n}{2^{q+1} c_{q}}\,,
\label{eq:smallnessbis}
\end{equation}
where $c_{q}$ is the constant of the change of shift formula \eqref{(5.4diekreu)} with $\eta=\frac{1}{2^{q+1}}$. Then,
\begin{equation}
1+|(\E\bfu)_{x_0, \varrho}| \leq 2(1+ |(\E\bfu)_{x_0, \vartheta\varrho}|)\,.
\label{eq:comparisonavebis}
\end{equation}
\label{lem:meanscomp}
\end{lemma}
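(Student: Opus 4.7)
The plan is to translate the averaged smallness assumption \eqref{eq:smallnessbis} into a \emph{pointwise} bound on $|(\E\bfu)_{x_0,\vartheta\varrho}-(\E\bfu)_{x_0,\varrho}|$ and then conclude \eqref{eq:comparisonavebis} by an elementary triangle-inequality rearrangement. Writing $a:=1+|(\E\bfu)_{x_0,\varrho}|$ for brevity, the starting point is Jensen's inequality applied to the convex shifted $N$-function $H_a(x_0,\cdot)$ on the smaller ball $B_{\vartheta\varrho}(x_0)\subset B_\varrho(x_0)$:
\begin{equation*}
H_a\bigl(x_0,|(\E\bfu)_{x_0,\vartheta\varrho}-(\E\bfu)_{x_0,\varrho}|\bigr)\leq \dashint_{B_{\vartheta\varrho}(x_0)}H_a(x_0,|\E\bfu-(\E\bfu)_{x_0,\varrho}|)\,\mathrm{d}x\leq \vartheta^{-n}\Phi(x_0,\varrho).
\end{equation*}
Together with \eqref{eq:smallnessbis} this already gives $H_a\bigl(x_0,|(\E\bfu)_{x_0,\vartheta\varrho}-(\E\bfu)_{x_0,\varrho}|\bigr)\leq \tfrac{1}{2^{q+1}c_q}H(x_0,a)$.

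Next I would pass from the shifted to the unshifted $N$-function via the change-of-shift formula \eqref{(5.4diekreu)} with $\mathbf{a}=\mathbf{0}$, any $\mathbf{b}$ of norm $a$ (so $|\mathbf{a}-\mathbf{b}|=a$), and $\eta=1/2^{q+1}$, which produces
\begin{equation*}
H(x_0,t)\leq c_qH_a(x_0,t)+\tfrac{1}{2^{q+1}}H(x_0,a)\qquad\text{for all $t\geq 0$.}
\end{equation*}
Specializing $t:=|(\E\bfu)_{x_0,\vartheta\varrho}-(\E\bfu)_{x_0,\varrho}|$ and inserting the Jensen bound, both summands on the right are $\leq 2^{-(q+1)}H(x_0,a)$, so that $H\bigl(x_0,|(\E\bfu)_{x_0,\vartheta\varrho}-(\E\bfu)_{x_0,\varrho}|\bigr)\leq 2^{-q}H(x_0,a)$. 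Inverting via \eqref{(2.3a)} with upper exponent $g_2=q$ returns $|(\E\bfu)_{x_0,\vartheta\varrho}-(\E\bfu)_{x_0,\varrho}|\leq (2^{-q})^{1/q}a=\tfrac{a}{2}$, and the triangle inequality $|(\E\bfu)_{x_0,\varrho}|\leq |(\E\bfu)_{x_0,\vartheta\varrho}|+|(\E\bfu)_{x_0,\vartheta\varrho}-(\E\bfu)_{x_0,\varrho}|$ rearranges to \eqref{eq:comparisonavebis}.

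The argument is essentially bookkeeping of constants: the prefactor $\vartheta^n/(2^{q+1}c_q)$ in \eqref{eq:smallnessbis} has been arranged precisely so that the $\vartheta$-factor cancels the one coming from Jensen, the $c_q$-factor cancels the change-of-shift constant, and what remains adds with the change-of-shift remainder to exactly $2^{-q}$, which is the unique power for which \eqref{(2.3a)} with $g_2=q$ produces the ratio $\tfrac12$ needed to close the triangle-inequality step. The only real point to watch is that Jensen must be applied to the \emph{shifted} function $H_a(x_0,\cdot)$, not to $H(x_0,\cdot)$, because only the former integrates the hypothesis \eqref{eq:smallnessbis} correctly.
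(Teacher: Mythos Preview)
Your proof is correct and follows essentially the same approach as the paper: Jensen's inequality together with the change-of-shift formula \eqref{(5.4diekreu)} (with $\eta=2^{-(q+1)}$) yields $H(x_0,|(\E\bfu)_{x_0,\vartheta\varrho}-(\E\bfu)_{x_0,\varrho}|)\leq 2^{-q}H(x_0,1+|(\E\bfu)_{x_0,\varrho}|)$, whence \eqref{(2.3a)} and the triangle inequality give \eqref{eq:comparisonavebis}. The only cosmetic difference is the order of operations---the paper applies Jensen to the unshifted $H(x_0,\cdot)$ first and then the change of shift under the integral, whereas you apply Jensen to $H_a(x_0,\cdot)$ first and then the change of shift pointwise; both routes lead to the same bound with the same constants.
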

\begin{proof}
As a consequence of  \eqref{(5.4diekreu)} for $\eta=\frac{1}{2^{q+1}}$ and with \eqref{eq:smallnessbis} we get
\begin{equation*}
\begin{split}
H(x_0,|(\E\bfu)_{\varrho}-(\E\bfu)_{\vartheta\varrho}|) & \leq \dashint_{B_{\vartheta\varrho}} H(x_0,|\E\bfu - (\E\bfu)_{\varrho}|)\,\mathrm{d}x \\
& \leq c_{q}\vartheta^{-n}\Phi(\varrho) + \frac{1}{2^{q+1}}H(x_0, 1+|(\E\bfu)_{\varrho}|) \\
& \leq \frac{1}{2^{q}} H(x_0, 1+|(\E\bfu)_{\varrho}|)\,,
\end{split}
\label{eq:computation}
\end{equation*}
whence, passing to $[H(x_0,\cdot)]^{-1}$ and taking into account \eqref{(2.3a)}, we obtain
\begin{equation*}
|(\E\bfu)_{\varrho}-(\E\bfu)_{\vartheta\varrho}| \leq \frac{1}{2}(1+|(\E\bfu)_{\varrho}|)\,.
\end{equation*}
Now,
\begin{equation*}
\begin{split}
1+|(\E\bfu)_{\varrho}| \leq |(\E\bfu)_{\varrho}-(\E\bfu)_{\theta\varrho}|+1+|(\E\bfu)_{\theta\varrho}| \leq \frac{1}{2}(1+|(\E\bfu)_{\varrho}|) + 1+ |(\E\bfu)_{\theta\varrho}|\,,
\end{split}
\end{equation*}
whence \eqref{eq:comparisonavebis} follows by re-absorbing the first term of the right-hand side into the left. 
\end{proof}

{We are now in position to prove the excess improvement estimate.}

\begin{lemma}\label{lem:lemma4.3ok}
For any fixed $\theta\in(0,\frac{1}{8})$, there exists $\varepsilon_1=\varepsilon_1(n,\nu,L,p,q,[\mu]_{C^\alpha},\theta)\in(0,1)$ such that if
\begin{equation}
\frac{\Phi(x_0,\varrho)}{H(x_0, 1+|(\E\bfu)_{x_0,\varrho}|)}+\varrho^{\widetilde{\gamma}}\leq \varepsilon_1\,,\quad \varrho\leq\theta^n\,,
\label{eq:4.18ok}
\end{equation} 
then
\begin{equation}
\Phi(x_0,\theta\varrho) \leq c_{\rm dec} \theta^2\left[\Phi(x_0,\varrho) + \varrho^{\widetilde{\gamma}}H(x_0, 1+|(\E\bfu)_{x_0,\varrho}|)\right]
\label{eq:4.19ok}
\end{equation}
for some constant $c_{\rm dec} = c_{\rm dec}(n,\nu,L,p,q,[\mu]_{C^\alpha},\|\E\bfu\|_{L^p(\Omega)})\geq1$.
\end{lemma}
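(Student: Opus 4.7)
The argument is a linearization in the style of Campanato--Mingione partial regularity, adapted to the shifted double-phase setting. Write $a := 1+|(\E\bfu)_{x_0,\varrho}|$, $G(\cdot) := H_a(x_0,\cdot)$, ${\bf v} := \bfu - \bm\ell_{x_0,\varrho}$ (so that $\E{\bf v} = \E\bfu - (\E\bfu)_{x_0,\varrho}$), and denote $\Psi(\varrho) := \Phi(x_0,\varrho) + \varrho^{\widetilde{\gamma}} H(x_0,a)$ as in \eqref{eq:hybridexcess}. The smallness hypothesis \eqref{eq:4.18ok} implies \eqref{eq:smallness}, so Lemma~\ref{lem:lemma4.2ok} (which implicitly relies on \eqref{eq:5.6acming}, propagated along the iteration carried out in Section~\ref{sec:proofmainthm}) applies and yields that ${\bf v}$ is almost $\mathcal{A}$-Stokes on $B_{\varrho/2}(x_0)$, with deviation proportional to $a\,\mathcal{R}(\Psi(\varrho)/H(x_0,a))$.

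Combining this with the reverse-H\"older estimate of Corollary~\ref{corollary3.2}, namely
\begin{equation*}
\left(\dashint_{B_{\varrho/2}(x_0)} [G(|\E{\bf v}|)]^\sigma\,\mathrm{d}x\right)^{1/\sigma} \leq c\, \Psi(\varrho),
\end{equation*}
we invoke the $\mathcal{A}$-Stokes approximation of Theorem~\ref{AStokes} in the modified form of Remark~\ref{rem:thmmodified}, with $\epsilon>0$ chosen so that $G(\epsilon)\sim\Psi(\varrho)$. Provided $\varepsilon_1$ is small enough that the output of Lemma~\ref{lem:lemma4.2ok} falls below the admissible tolerance~$\delta$, we obtain an $\mathcal{A}$-Stokes function ${\bf h}$ on $B_{\varrho/2}(x_0)$ with ${\bf h}={\bf v}$ on $\partial B_{\varrho/2}(x_0)$ satisfying
\begin{equation*}
\dashint_{B_{\varrho/2}(x_0)} G\!\left(\tfrac{|{\bf v}-{\bf h}|}{\varrho}\right) + G(|D({\bf v}-{\bf h})|)\,\mathrm{d}x \leq c\kappa\, \Psi(\varrho),
\end{equation*}
where $\kappa=\kappa(\varepsilon_1)$ is as small as desired.

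Standard $C^{1,\alpha}$-estimates for constant-coefficient $\mathcal{A}$-Stokes systems (cf.~\cite{fuchseregin00}), transported to shifted $N$-functions through \eqref{(2.6b)}, give the Campanato-type decay
\begin{equation*}
\dashint_{B_{2\theta\varrho}(x_0)} G\bigl(|\E{\bf h}-(\E{\bf h})_{x_0,2\theta\varrho}|\bigr)\,\mathrm{d}x \leq c_{\mathcal{A}}\theta^2 \dashint_{B_{\varrho/2}(x_0)} G(|\E{\bf h}|)\,\mathrm{d}x \leq c\theta^2 \Psi(\varrho).
\end{equation*}
The triangle inequality, the $\Delta_2$-property of $G$ and Lemma~\ref{lem:kornorlicz} then yield
\begin{equation*}
\dashint_{B_{2\theta\varrho}(x_0)} G\bigl(|\E{\bf v}-(\E{\bf v})_{x_0,2\theta\varrho}|\bigr)\,\mathrm{d}x \leq c\theta^2\Psi(\varrho) + c\theta^{-n}\kappa\,\Psi(\varrho).
\end{equation*}
Choosing $\kappa := \theta^{n+2}$, and $\varepsilon_1$ correspondingly small, absorbs the second term. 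Finally, Lemma~\ref{lem:meanscomp} (whose hypothesis \eqref{eq:smallnessbis} is ensured by a further smallness of $\varepsilon_1$) gives $a\sim 1+|(\E\bfu)_{x_0,\theta\varrho}|$, and the change-of-shift Lemma~\ref{lem:changeshift} replaces $G=H_a(x_0,\cdot)$ by $H_{1+|(\E\bfu)_{x_0,\theta\varrho}|}(x_0,\cdot)$ in the previous estimate, producing \eqref{eq:4.19ok}.

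The delicate point is the simultaneous tuning of $\varepsilon_1,\delta,\kappa,\theta$: the constant $c_{\mathcal{A}}$ in the Campanato decay is independent of $\theta$, so the factor $\theta^{-n}$ arising from passing to the smaller ball $B_{2\theta\varrho}\subset B_{\varrho/2}$ forces $\kappa$ to be at most of order $\theta^{n+2}$, and hence $\varepsilon_1$ to depend on $\theta$; the same $\varepsilon_1$ must also secure the hypothesis of Lemma~\ref{lem:meanscomp} and keep the Caccioppoli bound of Lemma~\ref{lem:lemma4.1} operative. This balance explains both the $\theta$-dependence of $\varepsilon_1$ and the survival of the hybrid term $\varrho^{\widetilde\gamma}H(x_0,a)$ on the right-hand side of \eqref{eq:4.19ok}.
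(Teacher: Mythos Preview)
Your proposal is correct and follows essentially the same approach as the paper: linearize via Lemma~\ref{lem:lemma4.2ok}, feed the higher-integrability of Corollary~\ref{corollary3.2} into the $\mathcal{A}$-Stokes approximation (Theorem~\ref{AStokes}/Remark~\ref{rem:thmmodified}) to produce $\bfh$, exploit the $C^2$-Campanato decay of $\bfh$, and close with $\kappa=\theta^{n+2}$ together with Lemma~\ref{lem:meanscomp} and the change of shift. The only organizational difference is that the paper first applies the Caccioppoli inequality (Lemma~\ref{lem:lemma4.1}) at scale $\theta\varrho$ and then Lemma~\ref{lem:thm3.2ok} to pass through the quantity $\dashint H_{1+|(\E\bfu)_{\theta\varrho}|}(x_0,|\bfu-\bm\ell_{\theta\varrho}|/(\theta\varrho))$, whereas you compare $\E\bfv$ to $\E\bfh$ directly at the symmetric-gradient level; your shortcut is legitimate because the approximation estimate \eqref{eq:lemma2.7bis} already controls $G(|D(\bfv-\bfh)|)$ and hence $G(|\E(\bfv-\bfh)|)$, and because $G=H_a(x_0,\cdot)$ satisfies $G(\theta t)\leq c\theta^2 G(t)$ (quadratic near the origin, $p\geq 2$ at infinity), which is exactly what yields the $\theta^2$ decay for $\E\bfh$.
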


\begin{proof}
{\bf Step~1:} By virtue of \eqref{eq:caccioppoliII} we have
\begin{equation}
\begin{split}
& \dashint_{B_{\theta\varrho}} H_{1+|(\E\bfu)_{\theta\varrho}|}(x_0, |\E\bfu-(\E\bfu)_{\theta\varrho}|)\,\mathrm{d}x \\
& \,\,\,\, \leq c \dashint_{B_{\theta\varrho}} H_{1+|(\E\bfu)_{\theta\varrho}|}\left(x_0,\frac{|{\bf u}-\bm\ell_{\theta\varrho}|}{\theta\varrho}\right)\,\mathrm{d}x + c  (\theta\varrho)^{\widetilde{\gamma}}\,H(x_0,1+|(\E\bfu)_{\theta\varrho}|)\,.
\end{split}
\label{eq:caccioppolitheta}
\end{equation} 
Choosing
\begin{equation}
\sqrt{\varepsilon_1}\leq \frac{\theta^n}{8}\,,
\label{eq:4.20ok}
\end{equation}
we obtain
\begin{equation}
|(\E\bfu)_{\theta\varrho}| \leq \frac{\sqrt{\varepsilon_1}}{8} |(\E\bfu)_{\varrho}| <  |(\E\bfu)_{\varrho}| \,.
\label{eq:comparisonave}
\end{equation}
Therefore, combining \eqref{eq:4.18ok} and \eqref{eq:4.20ok} 
 we finally obtain
\begin{equation}
\begin{split}
(\theta\varrho)^{\widetilde{\gamma}} \leq \left[\frac{\Phi(\varrho) }{H(x_0,1+|(\E\bfu)_{\varrho}|)} + \varrho^{\widetilde{\gamma}}\right]^2 \leq c \theta^2 \left[\frac{\Phi(\varrho) }{H(x_0,1+|(\E\bfu)_{\varrho}|)} + \varrho^{\widetilde{\gamma}}\right]\,.
\end{split}
\end{equation}
Inserting this estimate in \eqref{eq:caccioppolitheta} and using \eqref{eq:comparisonave}, we then obtain
\begin{equation}
\begin{split}
\Phi(\theta\varrho) \leq c \dashint_{B_{\theta\varrho}} H_{1+|(\E\bfu)_{\theta\varrho}|}\left(x_0,\frac{|{\bf u}-\bm\ell_{\theta\varrho}|}{\theta\varrho}\right)\,\mathrm{d}x + c \theta^2 \left[\Phi(x_0,\varrho) + \varrho^{\widetilde{\gamma}}H(x_0, 1+|(\E\bfu)_{x_0,\varrho}|)\right]\,.
\end{split}
\label{eq:caccioppolithetabis}
\end{equation} 
\noindent
{\bf Step~2:} Following \cite[Lemma~4.2]{CeladaOk}, we prove that Theorem~\ref{AStokes} and the subsequent remark can be applied to function
\begin{equation}
{\bf v}:= \frac{{\bfu - \bm\ell_{\varrho}}}{1+|(\E\bfu)_{\varrho}|}\,.
\end{equation}

Setting
\begin{equation}
G(t):= \frac{H_{1+|(\E\bfu)_{\varrho}|}(x_0, (1+|(\E\bfu)_{\varrho}|)t)}{H(x_0, 1+|(\E\bfu)_{\varrho}|)}\,,
\end{equation}
by using the fact that $G$ is indeed a shifted $N$-function it can be seen that
\begin{equation}
\tilde{c}G(t)\geq t^2\,, \quad t\in[0,1]\,,
\label{eq:lowboundG}
\end{equation}
for some constant $\tilde{c}>0$. Moreover, by Corollary~\ref{corollary3.2} we have that 
\begin{equation}
\begin{split}
\left(\dashint_{B_{\varrho}} \left[G\left(\frac{|\E(\bfu-\bm\ell_{\varrho})|}{1+|(\E\bfu)_{\varrho}|}\right)\right]^\sigma\,\mathrm{d}x \right)^{\frac{1}{\sigma}} & =\left(\dashint_{B_{\varrho}} \left[\frac{H_{1+|(\E\bfu)_{\varrho}|}(x_0, |\E\bfu-(\E\bfu)_{\varrho}|)}{H(x_0, 1+|(\E\bfu)_{\varrho}|)}\right]^\sigma\,\mathrm{d}x \right)^{\frac{1}{\sigma}} \\
&\leq c_{\rm high} \frac{1}{H(x_0, 1+|(\E\bfu)_{\varrho}|)}\dashint_{B_{\varrho}} H_{1+|(\E\bfu)_{\varrho}|}(x_0, |\E\bfu-(\E\bfu)_{\varrho}|)\,\mathrm{d}x \\
&\,\,\,\,\,\,\,\,\,\, + c_{\rm high}\varrho^{\widetilde{\gamma}} \\ 
& \leq c_{\rm high} \left [\frac{\Phi(\varrho)}{H(x_0, 1+|(\E\bfu)_{\varrho}|)}+\varrho^{\widetilde{\gamma}}\right]\,.
\end{split}
\label{eq:highintestimate}
\end{equation} 
Now, setting
\begin{equation}
\varepsilon:=\max\{c_{\rm Stokes}, \sqrt{c_{\rm high} \tilde{c}}\}\left [\frac{\Phi(\varrho)}{H(x_0, 1+|(\E\bfu)_{\varrho}|)}+\varrho^{\widetilde{\gamma}}\right]^{\frac{1}{2}} \leq \max\{c_{\rm Stokes}, \sqrt{c_{\rm high} \tilde{c}}\} \sqrt{\varepsilon_1}\,,
\label{eq:choiceeps}
\end{equation}
we can choose $\varepsilon_1$ such that $\varepsilon<1$. Moreover, combining with \eqref{eq:highintestimate} and \eqref{eq:lowboundG}, we get
\begin{equation}
\begin{split}
\left(\dashint_{B_{\varrho}} \left[G\left(\frac{|\E(\bfu-\bm\ell_{\varrho})|}{1+|(\E\bfu)_{\varrho}|}\right)\right]^\sigma\,\mathrm{d}x \right)^{\frac{1}{\sigma}} \leq c_{\rm high} \left [\frac{\Phi(\varrho)}{H(x_0, 1+|(\E\bfu)_{\varrho}|)}+\varrho^{\widetilde{\gamma}}\right] & = c_{\rm high}\frac{\varepsilon^2}{\max\{c_{\rm Stokes}, \sqrt{c_{\rm high} \tilde{c}}\}^2} \\
&\leq G(\varepsilon) \,.
\end{split}
\label{eq:highintestimatebis}
\end{equation} 
Inserting \eqref{eq:choiceeps} and \eqref{eq:4.18ok} into \eqref{eq:4.12ok}, written for $\bfv$ and for some $\bm\varphi\in C_{0,{\rm div}}^\infty(B_{\varrho})$ with $\|D\bm\varphi\|_{L^\infty(B_{\varrho})}\leq1$, we obtain
\begin{equation}
\begin{split}
\dashint_{B_{\varrho}}\mathcal{A}\left(\frac{\E(\bfu-\bm\ell_{\varrho})}{1+|(\E\bfu)_{\varrho}|}\right):\E\bm\varphi\,\mathrm{d}x \leq c_{\rm Stokes} \frac{[\omega(\varepsilon_1^\frac{1}{2})+\varepsilon_1]^{\frac{1}{2}}}{\max\{c_{\rm Stokes}, \sqrt{c_{\rm high} \tilde{c}}\}}\varepsilon\,.  
\end{split}
\end{equation}
Then, choosing $\varepsilon_1$ small enough, the assumptions of Theorem~\ref{AStokes} and Remark~\ref{rem:remarkastokes} for function $\bfv$ are in force. We denote by ${\bf h}$ the $\mathcal{A}$-Stokes function in $B_\varrho$ such that ${\bf h}=\bfv$ on $\partial B_\varrho$. Assertion \eqref{eq:lemma2.7bis} gives
\begin{equation}
\begin{split}
\frac{1}{H(x_0, 1+|(\E\bfu)_{\varrho}|)}\dashint_{B_{\varrho}} H_{1+|(\E\bfu)_{\varrho}|}(x_0, |\E\bfu-(\E\bfu)_{\varrho}- (1+|(\E\bfu)_{\varrho}|)\E{\bf h}|)\,\mathrm{d}x & = \dashint_{B_{\varrho}} G(|\E\bfv-\E{\bf h}|)\,\mathrm{d}x \\
& \leq \kappa G(\varepsilon)\,.
\end{split}
\label{eq:4.15celok}
\end{equation}
Moreover, 
we also have
\begin{equation*}
G(\varepsilon) \leq c \frac{H(x_0,(1+|(\E\bfu)_{\varrho}|)(1+\varepsilon))}{H(x_0,1+|(\E\bfu)_{\varrho}|)(1+\varepsilon)^2}\varepsilon^2 \leq c \varepsilon^2\,,
\end{equation*}
whence
\begin{equation}
\dashint_{B_{\varrho}} H_{1+|(\E\bfu)_{\varrho}|}(x_0, |\E\bfu-(\E\bfu)_{\varrho}- (1+|(\E\bfu)_{\varrho}|)\E{\bf h}|)\,\mathrm{d}x \leq \bar{c}\kappa \left[ \Phi(\varrho) + \varrho^{\widetilde{\gamma}} H(x_0, 1+|(\E\bfu)_{\varrho}|) \right]
\label{eq:4.16celok}
\end{equation}
for some constant $\bar{c}>0$. \\
\noindent
{\bf Step~3:} {Now, we go back to the estimate of the right hand side of \eqref{eq:caccioppolithetabis}. As a consequence of Lemma~\ref{lem:thm3.2ok}, with \eqref{eq:comparisonave} and \eqref{eq:comparisonavebis}, we get
\begin{equation*}
\begin{split}
&\dashint_{B_{\theta\varrho}}   H_{1+|(\E\bfu)_{\theta\varrho}|}\left(x_0,\frac{|{\bf u}-\bm\ell_{\theta\varrho}|}{\varrho}\right)\,\mathrm{d}x \\
& \leq c \left(1+ [\mu]_{C^\alpha}\|\E\bfu\|^{q-p}_{L^p(\Omega)}\theta^{\alpha-\frac{(q-p)n}{p}}\right)\dashint_{B_{\theta\varrho}} \left[H_{1+|(\E\bfu)_{\theta\varrho}|}(x_0, |\E\bfu-(\E\bfu)_{\theta\varrho}|)\right]\,\mathrm{d}x \\
& \leq c \left(1+ [\mu]_{C^\alpha}\|\E\bfu\|^{q-p}_{L^p(\Omega)}\theta^{\alpha-\frac{(q-p)n}{p}}\right)\dashint_{B_{\theta\varrho}} \left[H_{1+|(\E\bfu)_{\varrho}|}(x_0, |\E\bfu-(\E\bfu)_{\theta\varrho}|)\right]\,\mathrm{d}x\,.
\end{split}
\end{equation*}
Now, the integral in the right hand side above can be treated exactly as in \cite[pp. 24--25]{CeladaOk} starting from \eqref{eq:4.16celok}, to obtain the estimate}
\begin{equation}
\dashint_{B_{\theta\varrho}} H_{1+|(\E\bfu)_{\varrho}|}(x_0, |\E\bfu-(\E\bfu)_{\theta\varrho}|)\,\mathrm{d}x \leq c \theta^2 \left[\frac{\kappa}{\theta^{n+2}}+1\right] \left [\frac{\Phi(\varrho)}{H(x_0, 1+|(\E\bfu)_{\varrho}|)}+\varrho^{\widetilde{\gamma}}\right]\,.
\end{equation}
Thus, we omit further details. This concludes the proof of \eqref{eq:4.19ok}.
\end{proof}

{Now, we prove that the previous excess improvement estimate can be iterated at each scale.} For this, we introduce the Morrey-type excess
\begin{equation}
\Theta(x_0,\varrho):=\varrho^\frac{1}{2} [H(x_0,\cdot)]^{-1}\left(\dashint_{B_{\varrho}}H(x_0,1+|D \bfu|)\,\mathrm{d}x\right)\,.
\label{eq:remainder2}
\end{equation}
\begin{lemma}
Let $\Phi(x_0,\varrho)$ and  $\Theta(x_0,\varrho)$ 
be defined as in \eqref{eq:excess1} and \eqref{eq:remainder2}, 
respectively. 
Then there exist constants $\delta_*$, $\varepsilon_*, \varrho_*\in(0,1]$, $M\geq1$ and $\vartheta$ such that the following holds:
if the conditions 
\begin{equation}
\frac{\Phi(x_0,\varrho)}{H(x_0,1+|(\E\bfu)_{x_0,\varrho}|)}\leq \varepsilon_*\,,\qquad \Theta(x_0,\varrho)\leq\delta_*\,, \qquad |(\bfu)_{x_0,\varrho}|\leq\frac{1}{2}M
\label{eq:0step}
\end{equation}
hold on $B_\varrho(x_0)\subseteq\Omega$ for $\varrho\in(0,\varrho_*]$, then 
\begin{equation}
\frac{\Phi(x_0,\vartheta^m\varrho)}{H(x_0, 1+|(\E\bfu)_{x_0,\vartheta^m\varrho}|)}\leq \varepsilon_*\,,\qquad  \Theta(x_0,\vartheta^m\varrho)\leq\delta_*\,, \qquad |(\bfu)_{x_0,\vartheta^m\varrho}|\leq M
\label{eq:kstep}
\end{equation}
for every $m=0,1,\dots.$ In particular, this would imply
\begin{equation}
(\vartheta^m\varrho)^\frac{p}{2}(|D\bfu|^p)_{x_0,\vartheta^m\varrho} \leq \delta_*^p\,, \quad m=0,1,\dots\,.
\label{eq:equivalentp}
\end{equation}
Moreover, for any $\beta\in(0,1)$ the following Morrey-type estimate holds:
\begin{equation}
\Theta(y,r)\leq c\delta_*\left(\frac{r}{\varrho}\right)^\frac{\beta}{2}
\label{(5.10Stroffo)}
\end{equation}
for all $y\in B_{\varrho/2}(x_0)$ and $r\in(0,\varrho/2]$.
\label{lem:lemma3.13}
\end{lemma}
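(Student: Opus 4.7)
The plan is to proceed by induction on $m$, where the case $m=0$ is exactly the hypothesis \eqref{eq:0step}. The constants must be chosen in the following order: fix any $\beta\in(0,1)$ and pick $\vartheta\in(0,1/8)$ so small that $2^{q+1}c_{\rm dec}\vartheta^{2-\beta}\le\tfrac12$, with $c_{\rm dec}$ from Lemma~\ref{lem:lemma4.3ok}; next choose $\varepsilon_*$ smaller than both $\varepsilon_1$ from Lemma~\ref{lem:lemma4.3ok} and the threshold $\vartheta^n/(2^{q+1}c_q)$ from Lemma~\ref{lem:meanscomp}; then $\delta_*$ small; finally $\varrho_*\le 1$ small enough to absorb the residual terms produced by the inhomogeneous contribution $\varrho^{\widetilde\gamma}$ and by the telescoping sum controlling the averages of $\bfu$.

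Propagation of the normalized Campanato excess: under the induction hypothesis at scale $r:=\vartheta^m\varrho$, Lemma~\ref{lem:meanscomp} gives $1+|(\E\bfu)_{x_0,r}|\le2(1+|(\E\bfu)_{x_0,\vartheta r}|)$, hence $H(x_0,1+|(\E\bfu)_{x_0,r}|)\le 2^q H(x_0,1+|(\E\bfu)_{x_0,\vartheta r}|)$ by~\eqref{eq:2.2ok}. Combined with~\eqref{eq:4.19ok} this yields
\begin{equation*}
\frac{\Phi(x_0,\vartheta r)}{H(x_0,1+|(\E\bfu)_{x_0,\vartheta r}|)}\le \vartheta^{\beta}\,\frac{\Phi(x_0,r)}{H(x_0,1+|(\E\bfu)_{x_0,r}|)}+2^{q}c_{\rm dec}\vartheta^{2}\,r^{\widetilde\gamma},
\end{equation*}
and iterating from step $m$ down to $0$, the geometric sum of the inhomogeneous term is bounded by $c\varrho^{\widetilde\gamma}$, which can be made $\le \varepsilon_*/2$ by picking $\varrho_*$ small.

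Propagation of $\Theta$: applying the Sobolev--Korn inequality \eqref{eq:2.9bogel} with exponent $p$, and the change of shift formula \eqref{(5.4diekreu)} for $H(x_0,\cdot)$, we get
\begin{equation*}
\dashint_{B_{\vartheta^{m+1}\varrho}}H(x_0,1+|D\bfu|)\,\mathrm{d}x\le c\,\Phi(x_0,\vartheta^{m+1}\varrho)+c\,H(x_0,1+|(\E\bfu)_{x_0,\vartheta^{m+1}\varrho}|).
\end{equation*}
Combining with the bound on $\Phi/H$ just proved and with \eqref{(2.3a)} yields, after some algebra, a discrete decay $\Theta(x_0,\vartheta^{m+1}\varrho)\le c\,\vartheta^{\beta/2}\Theta(x_0,\vartheta^m\varrho)+c(\vartheta^{m+1}\varrho)^{1/2}$, and iterating gives $\Theta\le\delta_*$ as well as the Morrey-type decay~\eqref{(5.10Stroffo)}. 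The transfer from the center $x_0$ to any nearby $y\in B_{\varrho/2}(x_0)$ follows from the openness of the smallness conditions and a covering argument, with the ratio $r/\varrho$ controlling the interpolation between the discrete scales $\vartheta^k\varrho$.

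The \emph{main obstacle} is controlling $|(\bfu)_{x_0,\vartheta^{m+1}\varrho}|$, because $|(\E\bfu)_{x_0,\vartheta^m\varrho}|$ is allowed to blow up along the iteration; here the $C^1$-regularity is genuinely lost. The remedy is a telescoping Poincar\'e--Korn argument
\begin{equation*}
|(\bfu)_{x_0,\vartheta^{m+1}\varrho}-(\bfu)_{x_0,\varrho}|\le\sum_{k=0}^{m}|(\bfu)_{x_0,\vartheta^{k+1}\varrho}-(\bfu)_{x_0,\vartheta^{k}\varrho}|\le c\sum_{k=0}^{m}(\vartheta^k\varrho)^{1/2}\,\Theta(x_0,\vartheta^k\varrho),
\end{equation*}
and the Morrey decay of $\Theta$ ensures that the tail is bounded by $c\,\varrho_*^{1/2}\delta_*$, hence by $M/2$ for $\varrho_*$ small, so that $|(\bfu)_{x_0,\vartheta^{m+1}\varrho}|\le M$ and the induction closes. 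Estimate~\eqref{eq:equivalentp} is then immediate from the definition of $\Theta$, the bound $H(x_0,t)\ge t^p$ and the fact that $(|D\bfu|^p)_{x_0,\vartheta^m\varrho}\le \dashint_{B_{\vartheta^m\varrho}} H(x_0,1+|D\bfu|)\,\mathrm{d}x$.
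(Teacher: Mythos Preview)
Your overall induction scheme, the order in which you fix the constants, the propagation of the normalized Campanato excess via Lemma~\ref{lem:lemma4.3ok} together with Lemma~\ref{lem:meanscomp}, and the telescoping Poincar\'e--Korn argument for $|(\bfu)_{x_0,\vartheta^{m+1}\varrho}|$ all match the paper's proof and are correct.

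There is, however, a genuine gap in your propagation of $\Theta$. The displayed inequality
\[
\dashint_{B_{\vartheta^{m+1}\varrho}}H(x_0,1+|D\bfu|)\,\mathrm{d}x\le c\,\Phi(x_0,\vartheta^{m+1}\varrho)+c\,H(x_0,1+|(\E\bfu)_{x_0,\vartheta^{m+1}\varrho}|)
\]
is \emph{false} in general: the right-hand side sees only the symmetric part of the gradient, while the left-hand side sees the full gradient including the skew-symmetric average $(\W\bfu)_{x_0,\vartheta^{m+1}\varrho}$, which is not controlled by $\Phi$ or by $|(\E\bfu)_{x_0,\vartheta^{m+1}\varrho}|$. (Take $\bfu(x)=\mathbf{A}x$ with $\mathbf{A}$ skew-symmetric of large norm: then $\Phi\equiv 0$ and $H(x_0,1+|(\E\bfu)_r|)=H(x_0,1)$, but the left-hand side equals $H(x_0,1+|\mathbf{A}|)$.) Neither the Sobolev--Korn inequality \eqref{eq:2.9bogel} nor the Orlicz--Korn inequality of Lemma~\ref{lem:kornorlicz} can produce this estimate at a single scale without a residual term equivalent to $E(B_{\vartheta^{m+1}\varrho})$ itself, which cannot be reabsorbed because the coefficient is at least $2^{q-1}\ge 2$.

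The fix, which is precisely what the paper does, is to compare with the \emph{previous} scale: split $H(x_0,1+|D\bfu|)$ around $(D\bfu)_{x_0,\vartheta^{m}\varrho}$ (not $\vartheta^{m+1}\varrho$), apply the change of shift with parameter $1+|(\E\bfu)_{x_0,\vartheta^{m}\varrho}|$, enlarge the domain to $B_{\vartheta^m\varrho}$ at the cost of $\vartheta^{-n}$, and use Lemma~\ref{lem:kornorlicz} on $\bfu-\bm\ell_{x_0,\vartheta^m\varrho}$ (for which $(\W(\bfu-\bm\ell))_{\vartheta^m\varrho}=0$). This yields
\[
E(B_{\vartheta^{m+1}\varrho})\le 2^{q-1}\bigl(c_{1/2}\vartheta^{-n}\Phi(x_0,\vartheta^m\varrho)+\tfrac32\,E(B_{\vartheta^m\varrho})\bigr)\le C\,E(B_{\vartheta^m\varrho}),
\]
where the crucial point is that the full-gradient term $H(x_0,1+|(D\bfu)_{\vartheta^m\varrho}|)$ is absorbed into $E(B_{\vartheta^m\varrho})$ by Jensen, and $C$ is $\vartheta$-independent once $\varepsilon_*\le \vartheta^n/(2c_{1/2})$. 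From this, \eqref{(2.3a)} gives $G^{-1}(E(B_{\vartheta^{m+1}\varrho}))\le C^{1/p}G^{-1}(E(B_{\vartheta^m\varrho}))$, and then the factor $(\vartheta^{m+1}\varrho)^{1/2}$ in the definition of $\Theta$ supplies the decay. With this correction in place, the rest of your argument (including the Morrey estimate \eqref{(5.10Stroffo)} and \eqref{eq:equivalentp}) goes through as you wrote it.
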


\proof
As usual, we omit the explicit dependence on $x_0$. Let $\vartheta\in(0,1)$ be such that
\begin{equation}
\vartheta\leq \min\left\{(8c_{\rm dec}2^{q-1})^{-\frac{1}{2}},\frac{1}{2^{2q}},\frac{1}{2^{\frac{q}{p (1-\beta)}}}\right\}\,,
\label{eq:choosetheta}
\end{equation}
where $c_{\rm dec}$ is the constant of Lemma~\ref{lem:lemma4.3ok} depending only on $n,\nu,L,p,q,[\mu]_{C^\alpha},\|\E\bfu\|_{L^p(\Omega)}$.
Correspondingly, let  $\varepsilon_1=\varepsilon_1(n,\nu,L,p,q,[\mu]_{C^\alpha},\|\E\bfu\|_{L^p(\Omega)}, \vartheta)$ be the constant of Lemma~\ref{lem:lemma4.3ok}, applied with the choice $\varepsilon=\vartheta^{n+2}$. We choose $\varepsilon_*>0$ such that
\begin{equation}
\varepsilon_*\leq\min\left\{\frac{\varepsilon_1}{2},\frac{\vartheta^n}{\max\{2c_{\frac{1}{2}},2^{q+1} c_{q}\}}\right\}\,,
\label{(3.46verena)}
\end{equation}
where $c_{\frac{1}{2}}$ is the constant in the change-shift formula \eqref{(5.4diekreu)} with $\eta=\frac{1}{2}$, while $c_q$ is obtained with $\eta=\frac{1}{2^{q+1}}$. 
Moreover, we choose a radius $0 < \varrho_*\leq1$ such that
\begin{equation}
\varrho_*<\min \left\{\varepsilon_*^\frac{1}{\widetilde{\gamma}}, \left(\frac{\vartheta^n (1-\vartheta^{p^*-\frac{p}{2}})}{2\delta_*^p}\right)^\frac{1}{p^*-\frac{p}{2}}\right\}\,.
\label{(3.48verena)}
\end{equation}
As a consequence, $\varepsilon_*$ and $\varrho_*$ have the same dependencies as $\varepsilon_1$. 
{We argue by induction on $m$. Since \eqref{eq:kstep} are trivially true for $m=0$ by assumption \eqref{eq:0step}, our aim is to show that if \eqref{eq:kstep} holds for some $m\geq1$, then the corresponding inequalities hold with $m+1$ in place of $m$.
Setting
\begin{equation*}
E(B_{\vartheta^m\varrho}):=\dashint_{B_{\vartheta^m\varrho}}H(x_0, 1+|D \bfu|)\,\mathrm{d}x\,,
\end{equation*}
in order to prove the second inequalities in \eqref{eq:kstep} it will suffice to show that
\begin{equation}
E(B_{\vartheta^m\varrho})\leq H\left(x_0, \frac{\delta_*}{(\vartheta^m\varrho)^\frac{1}{2}}\right)\,.
\label{eq:equivalent}
\end{equation}
With \eqref{eq:kstep} at step $m$, the shift-change formula \eqref{(5.4diekreu)} with $\eta=\frac{1}{2}$, \eqref{(3.46verena)} and Lemma~\ref{lem:kornorlicz}, we have the estimate
\begin{equation}
\begin{split}
E(B_{\vartheta^{m+1}\varrho}) & \leq 2^{q-1}\left(\dashint_{B_{\vartheta^{m+1}\varrho}}H(x_0,|D \bfu - (D \bfu)_{\vartheta^{m}\varrho}|)\,\mathrm{d}x + H(x_0,1+|(D \bfu)_{\vartheta^{m}\varrho}|)\right) \\
& \leq 2^{q-1}\left(c_{\frac{1}{2}}\dashint_{B_{\vartheta^{m+1}\varrho}}H_{1+|(\E\bfu)_{\vartheta^{m}\varrho}|}(x_0,|D \bfu - (D \bfu)_{\vartheta^{m}\varrho}|)\,\mathrm{d}x \right. \\
& \,\,\,\,\,\,\,\,\,\,\,\,\,\,\,\,\,\, \left.\vphantom{c_{\frac{1}{2}}\dashint_{B_{\vartheta^{m+1}\varrho}}H_{1+|(\E\bfu)_{\vartheta^{m}\varrho}|}(x_0,|D \bfu - (D \bfu)_{\vartheta^{m}\varrho}|)\,\mathrm{d}x}+\frac{1}{2}H(x_0,1+|(\E \bfu)_{\vartheta^{m}\varrho}|)+ H(x_0,1+|(D \bfu)_{\vartheta^{m}\varrho}|)\right) \\
& \leq 2^{q-1}\left(c_{\frac{1}{2}}\theta^{-n}\dashint_{B_{\vartheta^{m}\varrho}}H_{1+|(\E\bfu)_{\vartheta^{m}\varrho}|}(x_0,|D \bfu - (D \bfu)_{\vartheta^{m}\varrho}|)\,\mathrm{d}x +\frac{3}{2}E(B_{\vartheta^{m}\varrho}) \right) \\
&  \leq 2^{q-1}\left(c_{\frac{1}{2}}\theta^{-n}\Phi(\vartheta^m\varrho) +\frac{3}{2}E(B_{\vartheta^{m}\varrho}) \right)\\
& \leq 2^{q-1}\left(c_{\frac{1}{2}}\vartheta^{-n}\varepsilon_*+\frac{3}{2}\right) E(B_{\vartheta^m\varrho})\\
&\leq 2^{q-1}\left(c_{\frac{1}{2}}\vartheta^{-n}\varepsilon_*+\frac{3}{2}\right)\vartheta^\frac{1}{2} H\left(x_0,\frac{\delta_*}{(\vartheta^{m+1}\varrho)^\frac{1}{2}}\right)\\
& \leq  H\left(x_0, \frac{\delta_*}{(\vartheta^{m+1}\varrho)^\frac{1}{2}}\right)\,,
\end{split}
\label{estim1}
\end{equation}}
which proves $\eqref{eq:kstep}_2$ for $m+1$. With this at hand, since the function $G(t^\frac{1}{p}):=H(x_0,t^\frac{1}{p})$ is convex, from Jensen's inequality we get
\begin{equation*}
G\left(\left(\dashint_{B_{\vartheta^i\varrho}}|D\bfu|^p \, \mathrm{d}x\right)^\frac{1}{p}\right) \leq \dashint_{B_{\vartheta^mi\varrho}}G(1+|D\bfu|)\, \mathrm{d}x \leq G\left(\frac{\delta_*}{(\vartheta^i\varrho)^\frac{1}{2}}\right)\,,
\end{equation*}
whence, passing to $G^{-1}$, we obtain \eqref{eq:equivalentp}.
Now, we prove by induction the first inequality in \eqref{eq:kstep} for $m+1$.
From \eqref{eq:kstep} at step $k$ and the choice of $\varrho_*$ as in \eqref{(3.48verena)}, we have
\begin{equation*}
\begin{split}
\frac{\Phi(\vartheta^m\varrho)}{H(x_0, 1+|(\E\bfu)_{\vartheta^m\varrho}|)} & \leq \varepsilon_*<2\varepsilon_*\leq\varepsilon_1\,,\\
(\vartheta^m\varrho)^{\widetilde{\gamma}} & <2\varepsilon_*\leq \varepsilon_1\,,
\end{split}
\end{equation*}
and
\begin{equation*}
\begin{split}
\Phi(\vartheta^m\varrho) + (\vartheta^m\varrho)^{\widetilde{\gamma}} H(x_0, 1+|(\E\bfu)_{\vartheta^m\varrho}|) & \leq 4 \epsilon_* H(x_0, 1+|(\E\bfu)_{\vartheta^m\varrho}|)\,.
\end{split}
\end{equation*}
Then, by virtue of Lemma~\ref{lem:lemma4.3ok} and Lemma~\ref{lem:meanscomp} applied with radius $\vartheta^m\varrho$ in place of $\varrho$, and recalling the choice of $\vartheta$ \eqref{eq:choosetheta}, we get
\begin{equation*}
\begin{split}
\Phi(\vartheta^{m+1}\varrho) &\leq 2c_{\rm dec}\vartheta^2[\Phi(\vartheta^m\varrho) + (\vartheta^m\varrho)^{\widetilde{\gamma}} H(x_0, 1+|(\E\bfu)_{\vartheta^m\varrho}|)] \\
&\leq 8c_{\rm dec}\epsilon_*\vartheta^2H(x_0, 1+|(\E\bfu)_{\vartheta^m\varrho}|)\\
& \leq \epsilon_* H(x_0, 1+|(\E\bfu)_{\vartheta^{m+1}\varrho}|)\,.
\end{split}
\end{equation*}
To conclude the proof, we are left to prove $\eqref{eq:kstep}_3$ and \eqref{(5.10Stroffo)}. 
We use $\eqref{eq:0step}_3$, the Poincar\'e inequality with $\eqref{eq:kstep}_2$, $\varrho\leq \varrho_*$ and \eqref{(3.48verena)}, \eqref{eq:equivalentp} for every $i=0,\dots,m$ and we get
\begin{equation}
\begin{split}
|(\bfu)_{\vartheta^{m+1}\varrho}| & \leq |(\bfu)_{\varrho}| + \vartheta^{-n} \sum_{i=0}^m\dashint_{B_{\vartheta^i\varrho}} |\bfu-(\bfu)_{\vartheta^i\varrho}|^{p^*} \, \mathrm{d}x \\
& \leq \frac{1}{2} M + \vartheta^{-n} \sum_{i=0}^m (\vartheta^i\varrho)^{p^*} \dashint_{B_{\vartheta^i\varrho}}|D\bfu|^p \, \mathrm{d}x \\
& \leq \frac{1}{2} M + \delta_*^p\vartheta^{-n} \sum_{i=0}^m (\vartheta^i\varrho)^{p^*-\frac{p}{2}} \\
& = \frac{1}{2} M + \delta_*^{p}\vartheta^{-n} \frac{\varrho^{p^*-\frac{p}{2}}}{1-\vartheta^{p^*-\frac{p}{2}}} \\
& \leq \frac{1}{2} M + \frac{1}{2}\leq M\,.
\end{split}
\end{equation}
Finally, setting $G(t):=H(x_0,t)$, since the iteration starting from $m=0$ of the estimate $G^{-1}(E(B_{\vartheta^{m+1}\varrho}))\leq 2^{\frac{q}{p}}G^{-1}(E(B_{\vartheta^{m}\varrho}))$, obtained by \eqref{estim1} and \eqref{(2.3a)}, with \eqref{eq:choosetheta} yields 
\begin{equation*}
(\vartheta^m\varrho)^\frac{1-\beta}{2}G^{-1}(E(B_{\vartheta^m\varrho}))\leq \varrho^\frac{1-\beta}{2}G^{-1}(E(B_{\varrho}))\leq \delta_*\varrho^{-\frac{\beta}{2}}\,,
\end{equation*}
and this estimate \emph{a fortiori} holds if we consider $E(B_{\vartheta^m\varrho}(y))$ for $y\in B_{\varrho/2}$ in place of $E(B_{\vartheta^m\varrho})$, we deduce the Morrey-type estimate
\begin{equation*}
r^\frac{1-\beta}{2}G^{-1}(E(B_{r}(y)))\leq c\delta_*\varrho^{-\frac{\beta}{2}}
\end{equation*}
for all $y\in B_{\varrho/2}$ and $r\leq\varrho/2$, which is equivalent to \eqref{(5.10Stroffo)}.
This concludes the proof.
\endproof

\section{Proof of Theorem~\ref{thm:thm1.1ok}} \label{sec:proofmainthm}

We are now in position to prove Theorem~\ref{thm:thm1.1ok}.

\begin{proof}
Let $\delta_*$, $\varepsilon_*, \varrho_*\in(0,1]$ be the constants of Lemma~\ref{lem:lemma3.13}. We define
\begin{equation*}
\Omega_0:=\left\{z_0\in\Omega:\,\, \bfu\in C^\beta(U_{z_0};\R^n) \mbox{ for every $\beta\in(0,1)$ and for some }U_{z_0}\subset\Omega\right\}\,,
\end{equation*}
where $U_{z_0}$ is an open neighborhood of $z_0$. Assuming that $x_0\in\Omega$ complies with

\begin{equation}
\begin{split}
&\lim_{\varrho\searrow0} \dashint_{B_{\varrho}(x_0)}|D\bfu-(D\bfu)_{x_0,\varrho}|\,\mathrm{d}x = 0\,, \\
M_{x_0}:=&\mathop{\lim\sup}_{\varrho\searrow 0}\left[\dashint_{B_\varrho(x_0)}H(x,1+|\E\bfu|)\,\mathrm{d}x + (|D\bfu|^p)_{x_0,\varrho} + |(\bfu)_{x_0,\varrho}|\right]<+\infty\,;
\end{split}
\label{eq:5.24ok}
\end{equation}
i.e., $x_0\in\Omega\setminus(\Sigma_1\cup\Sigma_2)$, we will prove that $x_0\in\Omega_0$. We fix $\beta\in(0,1)$ and choose $t\in(0,1)$ such that
\begin{equation}
\frac{1}{q} = t + \frac{1-t}{qs_0}\,,
\label{eq:ok(5.25)}
\end{equation}
where $s_0>1$ is the exponent of Lemma~\ref{lem:higint}.
We also set
\begin{equation}
\varepsilon\leq\min \left\{\left[\frac{1}{c_*}\left(\frac{\varepsilon_*}{2}\right)^\frac{p}{2}\frac{1}{\bar{c}(M_{x_0}+1)^{\frac{q(1-t)}{p}}}\right]^\frac{1}{pt}, 1 \right\}\,,
\label{eq:(5.26)ok}
\end{equation}
where $c_*$ is the constant $c_\eta$ in the change of shift formula for $\eta_*:=\frac{\varepsilon_*}{2^{n+1} (M_{x_0}+1) c_H}$. By virtue of \eqref{eq:5.24ok}, we can find $0<\varrho$ with 
\begin{equation}
\varrho \leq  \min\left\{\frac{\delta_*}{[H(x_0,\cdot)]^{-1}(\tilde{c}^\frac{1}{s_0}(M_{x_0}+1)^{q}) }, \varrho_*\right\}\,.
\label{eq:smallvarrho}
\end{equation}
and $B_{3\varrho}(x_0)\subset\subset\Omega$ such that
\begin{equation}
\begin{split}
&\dashint_{B_{\varrho}(x_0)}|D\bfu-(D\bfu)_{x_0,\varrho}|\,\mathrm{d}x <\varepsilon\,, \\
& \dashint_{B_{2\varrho}(x_0)}H(x,1+|\E\bfu|)\,\mathrm{d}x + (|D\bfu|^p)_{x_0,\varrho} + |(\bfu)_{x_0,\varrho}|<M_{x_0}+1\,.
\end{split}
\label{eq:4.40ok}
\end{equation}
%
\\
{\bf Step~1:} {We first establish the higher integrability of $H(x_0, 1+|D\bfu|)$ on $B_\varrho(x_0)$. Namely, we show that
\begin{equation}
\dashint_{B_{\varrho}(x_0)}[H(x_0,1+|D\bfu|)]^{s_0}\,\mathrm{d}x \leq \tilde{c} (M_{x_0}+1)^{qs_0} \,,
\label{eq:highintgradients}
\end{equation}
where $s_0>1$ is the exponent of Lemma~\ref{lem:higint}.

If $\mu$ satisfies \eqref{eq:3.8ok}, since \eqref{eq:4.40ok} implies \eqref{eq:smallnesshighint}, from \eqref{eq:3.9okbis} we infer 
\begin{equation*}
\begin{split}
\left(\dashint_{B_{\varrho}(x_0)}[H(x,1+|\E\bfu|)]^{s_0}\,\mathrm{d}x\right)^{\frac{1}{s_0}} & \leq c \dashint_{B_{2\varrho}(x_0)}H(x,1+|\E\bfu|)\,\mathrm{d}x \\ 
  & \leq c (M_{x_0}+1)\,,
\end{split}
\end{equation*}
whence
\begin{equation}
\dashint_{B_{\varrho}(x_0)}[H(x,1+|\E\bfu|)]^{s_0}\,\mathrm{d}x  \leq  c(M_{x_0}+1)^{s_0}\,.
\label{eq:highintsymm}
\end{equation}
Now, from Korn's inequality, Poincar\'e inequality, \eqref{eq:4.40ok}, \eqref{eq:3.9okbis} and $\varrho\leq1$ we have
\begin{equation*}
\begin{split}
\dashint_{B_\varrho(x_0)} (1+|D\bfu|)^{ps_0}\,\mathrm{d}x & \leq c \dashint_{B_\varrho(x_0)} (1+|\E\bfu|)^{ps_0}\,\mathrm{d}x + c \left(\dashint_{B_\varrho(x_0)} \left |\frac{\bfu - (\bfu)_{x_0,\varrho}}{\varrho}\right|\,\mathrm{d}x\right)^{ps_0} \\
& \leq c \dashint_{B_\varrho(x_0)} (1+|\E\bfu|)^{ps_0}\,\mathrm{d}x + c \left(\dashint_{B_\varrho(x_0)} |D\bfu|\,\mathrm{d}x\right)^{ps_0} \\
& \leq c \left(\dashint_{B_{2\varrho}(x_0)} (1+|\E\bfu|)^{p}\,\mathrm{d}x \right)^{s_0} + c(M_{x_0}+1)^{s_0}\,.
\end{split}
\end{equation*}
Analogously, using also H\"older's inequality and $\varrho\leq1$, we have
\begin{equation*}
\begin{split}
\dashint_{B_\varrho(x_0)} [\mu(x_0)(1+|D\bfu|)^q]^{s_0}\,\mathrm{d}x  & \leq c\varrho^{\alpha s_0}\left [\dashint_{B_\varrho(x_0)} (1+|\E\bfu|)^{qs_0}\,\mathrm{d}x + \left(\dashint_{B_\varrho(x_0)} |D\bfu|\,\mathrm{d}x\right)^{qs_0} \right] \\
& \leq c\varrho^{\alpha s_0}\left [\left(\dashint_{B_{2\varrho}(x_0)} (1+|\E\bfu|)^{p}\,\mathrm{d}x\right)^\frac{qs_0}{p} + \left(\dashint_{B_\varrho(x_0)} |D\bfu|\,\mathrm{d}x\right)^{qs_0} \right] \\
& \leq c\varrho^{(\alpha-\frac{(q-p)}{p}) s_0}\|1+|\E\bfu|\|_{L^p}^{q-p}\left(\dashint_{B_{2\varrho}(x_0)} (1+|\E\bfu|)^{p}\,\mathrm{d}x\right)^{s_0} \\
& \,\,\,\,\,\, + c\varrho^{\alpha s_0} \left(\dashint_{B_\varrho(x_0)} |D\bfu|\,\mathrm{d}x\right)^{qs_0}  \\
& \leq c \left(\dashint_{B_{2\varrho}(x_0)} (1+|\E\bfu|)^{p}\,\mathrm{d}x\right)^{s_0} + c (M_{x_0}+1)^\frac{qs_0}{p} \,.
\end{split}
\end{equation*}
Combining the previous estimates, we then obtain
\begin{equation}
\begin{split}
\dashint_{B_{\varrho}(x_0)}[H(x_0,1+|D\bfu|)]^{s_0}\,\mathrm{d}x  & \leq c \left(\dashint_{B_{2\varrho}(x_0)}H(x,1+|\E\bfu|)\,\mathrm{d}x\right)^{s_0} \\
& \leq c_1 (M_{x_0}+1)^\frac{qs_0}{p} \,.
\end{split}
\label{eq:highintgrad}
\end{equation} }
{If \eqref{eq:3.8ok} doesn't hold true, then it can be shown that $H(x_0,t)\leq c H(x,t)$. In this case, with \eqref{eq:4.40ok} and  \eqref{eq:3.6ok}, \eqref{eq:stimanormu}, we get
\begin{equation}
\left(\dashint_{B_{r}(x_0)}[H(x,1+|\E\bfu|)]^{s_0}\,\mathrm{d}x\right)^{\frac{1}{s_0}} \leq c \dashint_{B_{2r}(x_0)}H(x,1+|\E\bfu|)\,\mathrm{d}x\,,
\label{eq:highintlarge}
\end{equation}
whence assertion \eqref{eq:highintsymm} can still be inferred. Now, using Korn's inequality, Poincar\'e inequality, \eqref{eq:4.40ok}, \eqref{eq:highintlarge} we have
\begin{equation*}
\begin{split}
\dashint_{B_\varrho(x_0)} (1+|D\bfu|)^{ps_0}\,\mathrm{d}x & \leq c \dashint_{B_\varrho(x_0)} (1+|\E\bfu|)^{ps_0}\,\mathrm{d}x + c \left(\dashint_{B_\varrho(x_0)} |D\bfu|\,\mathrm{d}x\right)^{ps_0} \\
& \leq c \left(\dashint_{B_{2\varrho}(x_0)} H(x,1+|\E\bfu|)\,\mathrm{d}x \right)^{s_0} + c(M_{x_0}+1)^{s_0}\,.
\end{split}
\end{equation*}
In a similar way, using also that $\mu\leq1$, we have
\begin{equation*}
\begin{split}
\dashint_{B_\varrho(x_0)} [\mu(x_0)(1+|D\bfu|)^q]^{s_0}\,\mathrm{d}x  & \leq c\left [\dashint_{B_\varrho(x_0)} [\mu(x_0)(1+|\E\bfu|)^q]^{s_0}\,\mathrm{d}x + \left(\dashint_{B_\varrho(x_0)} |D\bfu|\,\mathrm{d}x\right)^{qs_0} \right] \\
& \leq c \left(\dashint_{B_{2\varrho}(x_0)} H(x_0, 1+|\E\bfu|)\,\mathrm{d}x\right)^{s_0} + c (M_{x_0}+1)^{qs_0}\\
& \leq c \left(\dashint_{B_{2\varrho}(x_0)} H(x, 1+|\E\bfu|)\,\mathrm{d}x\right)^{s_0} + c (M_{x_0}+1)^{qs_0}\,,
\end{split}
\end{equation*}
which combined with the previous one gives
\begin{equation}
\begin{split}
\dashint_{B_{\varrho}(x_0)}[H(x_0,1+|D\bfu|)]^{s_0}\,\mathrm{d}x  & \leq c \left(\dashint_{B_{2\varrho}(x_0)}H(x,1+|\E\bfu|)\,\mathrm{d}x\right)^{s_0} \\
& \leq c_2 (M_{x_0}+1)^{qs_0} \,.
\end{split}
\label{eq:highintgrad2}
\end{equation}
Assertion \eqref{eq:highintgradients} then follows combining \eqref{eq:highintgrad} and \eqref{eq:highintgrad2} and choosing $\tilde{c}:=\max\{c_1,c_2\}$.}

\noindent
{\bf Step 2:} From H\"older's inequality and the choice of $t$ in \eqref{eq:ok(5.25)} we get
\begin{equation*}
\begin{split}
\dashint_{B_\varrho(x_0)} H(x_0, |D\bfu-(D\bfu)_{x_0,\varrho}|)\,\mathrm{d}x & \leq \left(\dashint_{B_\varrho(x_0)} [H(x_0, |D\bfu-(D\bfu)_{x_0,\varrho}|)]^\frac{1}{q}\,\mathrm{d}x\right)^{tq} \\
& \,\,\,\, \times  \left(\dashint_{B_\varrho(x_0)} [H(x_0, |D\bfu-(D\bfu)_{x_0,\varrho}|)]^{s_0}\,\mathrm{d}x\right)^\frac{1-t}{s_0}\,.
\end{split}
\end{equation*}
Now, using Jensen's inequality for the concave function $\tilde{\Psi}$ such that $\frac{1}{2}\tilde{\Psi}(t)\leq \Psi(t) := [H(x_0,t)]^\frac{1}{q} \leq \tilde{\Psi}(t)$ (see Lemma~\ref{lem:lemma2.2ok}) and \eqref{eq:4.40ok},  we have
\begin{equation*}
\dashint_{B_\varrho(x_0)} [H(x_0, |D\bfu-(D\bfu)_{x_0,\varrho}|)]^\frac{1}{q}\,\mathrm{d}x \leq 2 [H(x_0,\varepsilon)]^\frac{1}{q} \leq 2[(1+\|\mu\|_\infty)\varepsilon^p]^\frac{1}{q}\,. 
\end{equation*}
On the other hand, using Jensen's inequality for the convex map $t\to [H(x_0,t)]^{s_0}$ we obtain
\begin{equation*}
\dashint_{B_\varrho(x_0)} [H(x_0, |D\bfu-(D\bfu)_{x_0,\varrho}|)]^{s_0}\,\mathrm{d}x \leq \dashint_{B_{\varrho}(x_0)}[H(x_0,1+|D\bfu|)]^{s_0}\,\mathrm{d}x\,.
\end{equation*}
Combining the previous estimates with \eqref{eq:highintgradients} we finally get
\begin{equation*}
\dashint_{B_\varrho(x_0)} H(x_0, |D\bfu-(D\bfu)_{x_0,\varrho}|)\,\mathrm{d}x \leq \bar{c}\varepsilon^{tp} (M_{x_0}+1)^{\frac{q(1-t)}{p}}\,,
\end{equation*}
for a constant $\bar{c}=\bar{c}(n,\nu,L,p,q,[\mu]_{C^\alpha},\|\mu\|_{L^\infty}, \|1+|\E\bfu|\|_{L^p})$. With the choice of $\varepsilon$ in \eqref{eq:(5.26)ok}, this implies
\begin{equation*}
\dashint_{B_\varrho(x_0)} H(x_0, |D\bfu-(D\bfu)_{x_0,\varrho}|)\,\mathrm{d}x \leq \frac{1}{c_*}\left(\frac{\varepsilon_*}{2}\right)^\frac{p}{2}\,. 
\end{equation*}
Now, using the change of shift formula with $\eta_*:=\frac{\varepsilon_*}{2^{n+1} (M_{x_0}+1) c_H}$ and $H(x_0,1)\geq1$, 
\begin{equation*}
\begin{split}
\frac{\Phi(x_0,\varrho)}{H(x_0, 1+|(\E\bfu)_{x_0,\varrho}|)} & \leq \frac{1}{H(x_0,1)}\dashint_{B_\varrho(x_0)}H_{1+|(\E\bfu)_{x_0,\varrho}|}(x_0, |\E\bfu - (\E\bfu)_{x_0,\varrho}|)\,\mathrm{d}x \\
& \leq \dashint_{B_\varrho(x_0)}H_{1+|(\E\bfu)_{x_0,\varrho}|}(x_0, |\E\bfu - (\E\bfu)_{x_0,\varrho}|)\,\mathrm{d}x \\
    & \leq c_* \dashint_{B_\varrho(x_0)}H(x_0, |\E\bfu - (\E\bfu)_{x_0,\varrho}|)\,\mathrm{d}x + \eta_* H_{1+|(\E\bfu)_{x_0,\varrho}|}(x_0, 1+|(\E\bfu)_{x_0,\varrho}|) \\
& \leq c_* \dashint_{B_\varrho(x_0)}H(x_0, |D\bfu - (D\bfu)_{x_0,\varrho}|)\,\mathrm{d}x + c_H \eta_* H(x_0, 1+|(\E\bfu)_{x_0,\varrho}|) \\
&  \leq \left(\frac{\varepsilon_*}{2}\right)^\frac{p}{2} + c_H \eta_* 2^n\dashint_{B_{2\varrho}(x_0)}H(x_0, 1+|\E\bfu|)\,\mathrm{d}x  \\
& \leq  \left(\frac{\varepsilon_*}{2}\right)^\frac{p}{2}  + \left(\frac{\varepsilon_*}{2}\right) \leq \varepsilon_* \,.
\end{split}
\end{equation*}
Moreover, with \eqref{eq:highintgradients} 
and \eqref{eq:smallvarrho} we have
\begin{equation*}
\begin{split}
\Theta(x_0,\varrho) & =  \varrho^\frac{1}{2} [H(x_0,\cdot)]^{-1}\left(\dashint_{B_{\varrho}}H(x_0,1+|D \bfu|)\,\mathrm{d}x\right) \\
     & \leq \varrho^\frac{1}{2} [H(x_0,\cdot)]^{-1}(\tilde{c}^\frac{1}{s_0}(M_{x_0}+1)^{q}) \\
 & \leq \delta_*\,.
\end{split}
\end{equation*}
By the absolute continuity of the integral, we can find an open neighborhood $U_{x_0}$ of $x_0$ such that
\begin{equation*}
\frac{\Phi(x,\varrho)}{H(x_0, 1+|(\E\bfu)_{x_0,\varrho}|)}<\varepsilon_*\quad \mbox{ and }\quad \Theta(x,{\varrho})<\delta_*
\end{equation*}
for every $x\in U_{x_0}$. Then, taking into account also \eqref{eq:5.24ok}, we can apply Lemma~\ref{lem:lemma3.13} at each point of $U_{x_0}$. This provides a Morrey-type estimate as in \eqref{(5.10Stroffo)} proving that $\bfu\in C^{0,\beta}(U_{x_0},\R^n)$ for every $\beta\in(0,1)$ (see, e.g., \cite[Lemma~3.15]{goodscistro}). Thus, $x_0\in\Omega_0$ and the proof is concluded.
\end{proof}

\section*{Acknowledgments }

The authors are members of Gruppo Nazionale per l'Analisi Matematica, la Probabilit\`a e le loro Applicazioni (GNAMPA) of INdAM.
G. Scilla and B. Stroffolini have been supported by the project STAR PLUS 2020 – Linea 1 (21‐UNINA‐EPIG‐172) ``New perspectives in the Variational modeling of Continuum Mechanics''.

\section*{Conflict of interest}
The authors  declare no conflict of interest.



\end{document}